\patchcmd{\@maketitle}{\LARGE \@title}{\LARGE\bfseries\@title}{}{}
\renewcommand{\@seccntformat}[1]{\csname the#1\endcsname.\quad}
\definecolor{darkblue}{rgb}{0,0,.5}
\def\th@plain{%
	\thm@notefont{}
	\itshape 
}
\def\th@definition{%
	\thm@notefont{}
	\normalfont 
}
\renewenvironment{proof}[1][\proofname]{\par
	\normalfont
	\topsep0\p@\@plus3\p@ \trivlist
	\item[\hskip\labelsep\itshape
	#1\@addpunct{.}]\ignorespaces
}{%
	\qed\endtrivlist
}
\newtheorem{theorem}{Theorem}[section]
\newtheorem{lemma}[theorem]{Lemma}
\newtheorem{corollary}[theorem]{Corollary}
\newtheorem{proposition}[theorem]{Proposition}
\newtheorem{assumption}[theorem]{Assumption}
\theoremstyle{definition}
\newtheorem{definition}[theorem]{Definition}
\theoremstyle{definition}
\newtheorem{example}[theorem]{Example}
\theoremstyle{definition}
\newtheorem{remark}[theorem]{Remark}
\theoremstyle{definition}
\newtheorem{algorithm}{Algorithm}
\newcommand{\dom}{\ensuremath{\operatorname{dom}}}
\newcommand{\Fix}{\ensuremath{\operatorname{Fix}}}
\newcommand{\zer}{\ensuremath{\operatorname{zer}}}
\newcommand{\gra}{\ensuremath{\operatorname{gra}}}
\newcommand{\Id}{\ensuremath{\operatorname{Id}}}
\newcommand{\ran}
{\ensuremath{\operatorname{ran}}}
\newcommand{\rank}{\ensuremath{\operatorname{rank}}}
\newcommand{\lspan}{\ensuremath{\operatorname{span}}}
\newcommand{\diag}{\ensuremath{\operatorname{diag}}}
\newcommand{\Deg}{\ensuremath{\operatorname{Deg}}}
\newcommand{\Inc}{\ensuremath{\operatorname{Inc}}}
\newcommand{\Lap}{\ensuremath{\operatorname{Lap}}}
\newcommand{\scal}[2]{\left\langle #1,#2 \right\rangle}
\def\bA{{\boldsymbol{A}}}
\def\bB{{\boldsymbol{B}}}
\def\bC{{\boldsymbol{C}}}
\def\bL{{\boldsymbol{L}}}
\def\bZ{{\boldsymbol{Z}}}
\def\ba{{\mathbf{a}}}
\def\bOne{{\mathbbm{1}}}
\def\bs{{\mathbf{s}}}
\def\bu{{\mathbf{u}}}
\def\bv{{\mathbf{v}}}
\def\bw{{\mathbf{w}}}
\def\bx{{\mathbf{x}}}
\def\by{{\mathbf{y}}}
\def\bz{{\mathbf{z}}}
\def\DB{{E}}
\begin{document}

\title{Primal-dual splitting for structured composite monotone inclusions with or without cocoercivity}

\author{
Minh N. Dao\thanks{School of Science, RMIT University, Melbourne, VIC 3000, Australia.
E-mail:~\href{href:minh.dao@rmit.edu.au}{minh.dao@rmit.edu.au}.},
~
Hung M.~Phan\thanks{Department of Mathematics and Statistics, Kennedy College of Sciences, University of Massachusetts Lowell, Lowell, MA 01854, USA.
E-mail:~\href{href:hung\char`_phan@uml.edu}{hung\char`_phan@uml.edu}.},
~
Matthew K. Tam\thanks{School of Mathematics and Statistics, The University of Melbourne, Parkville, VIC 3010, Australia.
E-mail:~\href{href:matthew.tam@unimelb.edu.au}{matthew.tam@unimelb.edu.au}.},
~and~
Thang D. Truong\thanks{School of Science, RMIT University, Melbourne, VIC 3000, Australia.
E-mail:~\href{href:thang.tdk64@gmail.com}{thang.tdk64@gmail.com}.}
}

\date{May 13, 2026}

\maketitle

\begin{abstract}
In this paper, we propose a primal-dual splitting algorithm for a broad class of structured composite monotone inclusions that involve finitely many set-valued operators, compositions of set-valued operators with bounded linear operators, and single-valued operators possibly without cocoercivity. The proposed algorithm is not only a unification for several contemporary algorithms but also a blueprint to generate new algorithms with graph-based structures using a single transparent convergence analysis. Our approach reduces dimensionality compared with the standard product space technique, which typically reformulates the original problem as the sum of two maximally monotone operators in order to apply splitting methods. It accommodates different cocoercive or Lipschitz constants as well as different resolvent parameters, and yields a larger allowable stepsize range than recent methods. We demonstrate the practicality of the approach by a numerical experiment on cancer detection using the decentralized fused LASSO problem.  
\end{abstract}

\noindent{\bfseries Keywords:}
Distributed optimization,
composite monotone inclusions,
splitting algorithms,
primal-dual algorithms,
fused LASSO problem.

\noindent{\bf Mathematics Subject Classification (MSC 2020):}
47H05,  
47H10,  
65K10,  
90C30.  

\section{Introduction}

Many interesting problems such as composite optimization problems, structured saddle-point problems, and variational inequalities can be formulated as monotone inclusion problems \cite{ComP11, Roc70, RW98}. Among the most widely used methods for solving such problems are \emph{splitting algorithms}, in which computations are performed separately on each operator instead of their sums, see, e.g., \cite{BC17, DR56, LM79, MT23, Ryu20, Tseng00}. When an optimization problem involves compositions of set-valued operators with bounded linear operators, common mathematical tools such as resolvents can be computed for such compositions, however, it is generally very difficult, thus, not desirable. Instead, \emph{primal-dual splitting algorithms} \cite{ABT23, BAC11, ChamP11, Con13, Vu13} are usually used to process the set-valued operators separately using backward steps via their resolvents, while the bounded linear operators are evaluated directly using forward steps on their own or on their adjoints. Considering the primal inclusion and the dual inclusion simultaneously provides us a better understanding of the problem.

Let $\mathcal{H}$ and $(\mathcal{G}_k)_{1\leq k\leq r}$ be real Hilbert spaces. We consider the primal inclusion
\begin{align}
\label{primal_prob}
    \text{find } x \in \mathcal{H} \text{ such that } 0 \in \sum_{i=1}^n A_i x + \sum_{k=1}^r L_k^* B_k L_k x + \sum_{j=1}^p C_jx,
\end{align}
where, for each $i\in \{1, \dots, n\}$, $A_i\colon \mathcal{H} \rightrightarrows \mathcal{H}$ is a maximally monotone operator, for each $k\in \{1, \dots, r\}$, $B_k\colon \mathcal{G}_k \rightrightarrows \mathcal{G}_k$ is a maximally monotone operator and $L_k\colon \mathcal{H}\rightarrow \mathcal{G}_k$ is a bounded linear operator with adjoint operator $L_k^*$, and for each $j\in \{1, \dots, p\}$, $C_j\colon \mathcal{H}\to \mathcal{H}$ is either a cocoercive operator, or a monotone and Lipschitz continuous operator. Problem \eqref{primal_prob} arises in a wide range of applications, including location problems \cite{BH13}, image reconstruction \cite{ChamP11}, and signal processing \cite{ComP11}. The associated dual inclusion in the sense of Attouch--Th{\'e}ra \cite{AT96} is
\begin{align}
\label{dual_prob}
\begin{aligned}
    &\text{find } (s_1,\dots,s_r) \in \prod_{k=1}^r\mathcal{G}_k \text{~such that~} \\
    &0 \in - L_k\left(\sum_{i=1}^n A_i + \sum_{j=1}^p C_j\right)^{-1} \left(-\sum_{k=1}^r L_k^*s_k\right) + B_k^{-1} s_k,\quad k\in\{1,\dots,r\}.
\end{aligned}
\end{align}

When $n\neq 0$, $r=0$, and $p=0$, problem~\eqref{primal_prob} reduces to finding a zero in the sum of finitely many maximally monotone operators that are potentially set-valued. The most well-known algorithm for the classical case $n=2$ is the \emph{Douglas--Rachford algorithm} \cite{DR56}. Recently, a splitting algorithm was proposed for $n=3$ by Ryu \cite{Ryu20}, while a different resolvent splitting for the general case $n\geq 2$ was introduced by Malitsky and Tam \cite{MT23}. The latter work was then extended into a framework \cite{Tam23} that covers these algorithms. In the setting when $n \neq 0$, $r=0$, and $p \neq 0$, the problems involve not only maximally monotone operators but also single-valued operator which can be used directly by forward evaluations. When $n=1$ and $p=1$, the \emph{forward-backward algorithm} \cite{LM79} is typically used when $C$ is cocoercive, while the \emph{forward-backward-forward algorithm}~\cite{Tseng00} and the \emph{forward-reflected-backward algorithm} \cite{MT20} are commonly applied when $C$ is monotone and Lipschitz continuous. Moreover, the authors of \cite{AMTT23} developed a \emph{distributed forward-backward algorithm} for the case where $n\geq 2$, $p=n-1$, and each $C_j$ is cocoercive, as well as a second algorithm for $n\geq 3$, $p=n-2$, and each $C_j$ monotone and Lipschitz continuous. Recently, the \emph{forward-backward algorithms devised by graphs} in \cite{ACL24} and a class of algorithms in \cite{ACGN25} effectively characterized methods that use only individual resolvent evaluations and direct evaluations of cocoercive operators, while a general approach to distributed operator splitting \cite{DTT26} addresses the situations where the single-valued operators may not be cocoercive.

In the setting when $r\neq 0$, there are many primal-dual algorithms are proposed, but only for some special cases of problem~\eqref{primal_prob}. For example, in the case $n=1$, $r=1$, and $p=0$, popular algorithms include the \emph{Chambolle--Pock algorithm} \cite{ChamP11} and an algorithm proposed by Brice\~no-Arias and Combettes \cite{BAC11}. When $n=0$, $r=1$, and $p=1$, one can use \emph{primal-dual fixed-point algorithm based on the proximity operator} (PDFP$^2$O) or \emph{proximal alternating predictor-corrector} (PAPC) \cite{CHZ13, DST15}. For the case where $n=1$, $r=1$, and $p=1$, or when a Lipschitz and a cocoercive operator are treated simultaneously, we refer interested readers to \cite{ComP12, Con13, Rol24, Vu13}. When $r \geq 2$, one can adapt some of the above algorithms and use product space reformulations, even though it may not utilize different structures of algorithm design. Recently, Arag\'on-Artacho~et~al. \cite{ABT23} studied the case with $n\geq 2$, $r=1$, and $p=0$ in the reduced dimension $n-1$. Then, to incorporate multiple bounded linear operators with $r \geq 2$, the proposed algorithm, however, still relies on a product space reformulation. 

Most of the aforementioned algorithms reformulate the original problem as the sum of a maximally monotone operator and a skew-symmetric or cocoercive operator, then apply the forward-backward \cite{Con13, Vu13} or forward-backward-forward approach \cite{BAC11, ComP12}. In this work, we introduce a primal-dual splitting algorithm for structured composite monotone inclusions that unifies many well-known algorithms in the literature through a single transparent convergence analysis. Our algorithm provides a different perspective using reduced dimension $n-1+r$ compared to dimension $n+r$ using product space reformulations which potentially contribute to the theory and applications of primal-dual algorithms. Furthermore, we directly handle the linear operators $L_1, \dots, L_r$, rather than through a product space reformulation, which allows us to generate alternative designs with potentially significantly larger stepsizes compared to those in \cite[Corollary 1]{ABT23}. Therefore, this approach facilitates distributed computation without central coordination which helps prevent bottlenecks that arise from centralized coordination in practice. In addition, different cocoercive or Lipschitz constants, as well as resolvent parameters are also allowed for distributed implementation without requiring the knowledge of a global cocoercive or Lipschitz constant.

In this work, our main contributions are as follows.
\begin{enumerate}
    \item 
    We develop a primal-dual algorithm for structured composite monotone inclusions involving finitely many set-valued operators, composition of set-valued operators with bounded linear operators, and single-valued operators that may not be cocoercive. Our algorithm does not rely on reformulating the original problem as the sum of a maximally monotone operator and a skew-symmetric or cocoercive operator using product space reformulations but rather directly exploit the problem structure to derive different algorithm designs, offer possibly larger stepsize ranges with explicit formulas. For our convergence analysis, we introduce the concept of \emph{quasicomonotonicity} in Definition~\ref{def:}\ref{def:quasi_como} and its natural connection to quasiaveragedness \cite[Definition 2.1]{DTT26}.
    \item 
    The proposed algorithm not only unifies several contemporary methods but also provides a general framework for constructing new graph-based algorithms within a single unified convergence analysis. Notably, our analysis enables the derivation of broader admissible stepsize ranges under weaker assumptions on the coefficient matrices compared with recent methods \cite{ACGN25, ABT23, DTT26}. Moreover, the algorithm also allows different cocoercive or Lipschitz constants as well as different resolvent parameters suitable for distributed implementation, thereby improving upon certain existing methods. Finally, we explore various choices of the coefficient matrices of the algorithm and examine the effect of stepsizes on the algorithm's performance in a numerical experiment on cancer detection using the decentralized fused LASSO problem.
\end{enumerate}

The remainder of this paper is structured as follows. In Section~\ref{s:prelim}, we introduce some notations and background materials on set-valued, single-valued operators, Kronecker product, and solution sets of the primal and dual problem. In Section~\ref{s:framework}, we present our primal-dual splitting algorithm in Algorithm~\ref{algo:full} with main convergence results in Theorem~\ref{t:cvg}. We discuss the relation to existing algorithms and new algorithms in Section~\ref{s:realizations}. Section~\ref{s:num_exper} provides a numerical experiment on the decentralized fused LASSO problem for cancer detection, examining the performance of the algorithm under different settings of the coefficient matrices and parameters.

\section{Preliminaries}
\label{s:prelim}

In this paper, the sets of nonnegative integers and real numbers are denoted by $\mathbb{N}$ and $\mathbb{R}$, respectively. We assume that $\mathcal{H}$ and $(\mathcal{G}_k)_{1\leq k\leq r}$ are real Hilbert spaces equipped with their respective inner product $\scal{\cdot}{\cdot}$ and induced norm $\|\cdot\|$. Unless otherwise stated, we use the standard inner product in the product space $\mathcal{H}^n$, i.e., for $\bx=(x_1,\ldots,x_n)\in \mathcal{H}^n$ and $\bar\bx=(\bar{x}_1,\ldots,\bar{x}_n) \in\mathcal{H}^n$,
\begin{align*}
\scal{\bx}{\bar\bx} =\sum_{i=1}^n\scal{x_i}{\bar{x}_i}.
\end{align*}
With this inner product, we say that a linear operator $\Psi:\mathcal{H}^n\to\mathcal{H}^n$ is {\em positive semidefinite}, denoted by $\Psi\succeq 0$ if, for all $\bx\in\mathcal{H}^n$, $\scal{\bx}{\Psi\bx}\geq0$. Strong and weak convergence of sequences are denoted by $\rightarrow$ and $\rightharpoonup$, respectively. 

For an operator $A$ on $\mathcal{H}$, we write $A\colon \mathcal{H}\rightrightarrows \mathcal{H}$ when $A$ is set-valued, and $A\colon \mathcal{H}\to \mathcal{H}$ when $A$ is single-valued. For such an operator $A$, the \emph{domain}, \emph{graph}, \emph{fixed-point set}, and \emph{zero set} are defined by
$\dom A :=\{x\in \mathcal{H}: Ax\neq \varnothing\}$, $\gra A :=\{(x,u)\in \mathcal{H}\times \mathcal{H}: u\in Ax\}$, $\Fix A :=\{x\in \mathcal{H}: x\in Ax\}$, and $\zer A:=\{x\in\mathcal{H}: 0 \in Ax\}$, respectively. The \emph{resolvent} of $A$ is defined by $J_A :=(\Id +A)^{-1}$, where $\Id$ denotes the identity operator. We say that $A$ is \emph{monotone} if, for all $(x, u), (y, v)\in \gra A$,
\begin{align*}
\scal{x -y}{u -v} \geq 0
\end{align*}
and \emph{maximally monotone} if it is monotone and there exists no monotone operator whose graph properly contains $\gra A$.

An operator $C\colon \mathcal{H}\to \mathcal{H}$ is \emph{$\ell$-Lipschitz continuous} for $\ell\in [0, +\infty)$ if, for all $x, y\in \mathcal{H}$, $\|Cx -Cy\| \leq \ell\|x -y\|$
and \emph{$\frac{1}{\ell}$-cocoercive} for $\ell\in (0, +\infty)$ if, for all $x, y\in \mathcal{H}$, $\scal{Cx -Cy}{x -y} \geq \frac{1}{\ell}\|Cx -Cy\|^2$.
By the Cauchy--Schwarz inequality, every $\frac{1}{\ell}$-cocoercive operator is monotone and $\ell$-Lipschitz continuous, which, in turn, means that it is also maximally monotone \cite[Corollary 20.28]{BC17}. 

For our convergence analysis, we will recall several related concepts.

\begin{definition}
\label{def:}
We say that
\begin{enumerate}
\item $\Gamma:\mathcal{H}\to\mathcal{H}$ is \emph{$\alpha$-comonotone} \cite[Definition 2.4]{BMW20} with $\alpha \in \mathbb{R}$ if, for all $x,y\in \mathcal{H}$,
\begin{align*}
    \scal{\Gamma x - \Gamma y}{x -y} \geq \alpha\|\Gamma x - \Gamma y\|^2.
\end{align*}
\item \label{def:quasi_como}
$\Gamma$ is {\em $\alpha$-quasicomonotone} (with respect to the set $\zer\Gamma$) if, for all $x\in\mathcal{H}$ and $y\in\zer \Gamma $,
\begin{align*}
    \scal{\Gamma x}{x -y} \geq \alpha\| \Gamma x\|^2.
\end{align*}
Unless stated otherwise, we will simply say $\Gamma$ is $\alpha$-quasicomonotone and drop the reference set $\zer\Gamma$.
\item $T\colon \mathcal{H}\to \mathcal{H}$ is \emph{conically $\rho$-averaged} \cite[Definition~2.1]{BDP22} with $\rho \in (0, +\infty)$ if, for all $x, y\in \mathcal{H}$, 
\begin{align*}
\label{averaged}
\|Tx -Ty\|^2 +\frac{1-\rho}{\rho}\|(\Id -T)x -(\Id -T)y\|^2 \leq \|x -y\|^2,
\end{align*}
\item $T$ is \emph{conically $\rho$-quasiaveraged} (with respect to the set $\Fix T$) \cite[Definition 2.1]{DTT26} if, $\rho\in (0, +\infty)$ and for all $x\in \mathcal{H}$ and all $y\in \Fix T$, 
\begin{align*}
\|Tx -y\|^2 +\frac{1-\rho}{\rho}\|(\Id -T)x\|^2 \leq \|x -y\|^2.
\end{align*}
Unless stated otherwise, we will simply say $T$ is conically $\rho$-quasiaveraged and drop the reference set $\Fix T$.
\end{enumerate}
\end{definition}
Clearly, every conically $\rho$-averaged operator is conically $\rho$-quasiaveraged.
It is known from \cite[Proposition~3.3]{BDP22} that, for $\alpha, \theta\in (0, +\infty)$, $\Gamma$ is $\alpha$-comonotone if and only if $\Id-\theta\Gamma$ is conically $\frac{\theta}{2\alpha}$-averaged. Naturally, we will derive a similar connection between conical quasiaveragedness and quasicomonotonicity.

\begin{proposition}
\label{p:quasi_co_averaged}
    Let $\Gamma \colon \mathcal{H} \to \mathcal{H}$ and $\alpha,\theta \in (0, +\infty)$. Then $\Gamma$ is $\alpha$-quasicomonotone if and only if $T:=\Id-\theta \Gamma$ is conically $\frac{\theta}{2\alpha}$-quasiaveraged.
\end{proposition}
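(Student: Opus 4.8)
The plan is to rewrite the defining inequality of conical quasiaveragedness for $T:=\Id-\theta\Gamma$ directly in terms of $\Gamma$ and show that, after elementary algebra, it collapses to the quasicomonotonicity inequality. First I would record the book\-keeping fact that, since $\theta>0$, one has $Tx=x\iff\theta\Gamma x=0\iff\Gamma x=0$, so $\Fix T=\zer\Gamma$; hence the reference sets appearing in the two notions coincide, and it suffices to compare the two defining inequalities for an arbitrary but fixed $x\in\mathcal{H}$ and $y\in\zer\Gamma=\Fix T$.

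Next, using $\Id-T=\theta\Gamma$, I would expand
\begin{align*}
\|Tx-y\|^2=\|(x-y)-\theta\Gamma x\|^2=\|x-y\|^2-2\theta\langle\Gamma x,x-y\rangle+\theta^2\|\Gamma x\|^2
\end{align*}
and $\|(\Id-T)x\|^2=\theta^2\|\Gamma x\|^2$. Substituting these into the conical $\rho$-quasiaveragedness inequality $\|Tx-y\|^2+\tfrac{1-\rho}{\rho}\|(\Id-T)x\|^2\le\|x-y\|^2$ with $\rho:=\tfrac{\theta}{2\alpha}$, the term $\|x-y\|^2$ cancels on both sides, and using the identity $1+\tfrac{1-\rho}{\rho}=\tfrac1\rho$ the remaining inequality becomes $\tfrac{\theta^2}{\rho}\|\Gamma x\|^2\le 2\theta\langle\Gamma x,x-y\rangle$. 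Dividing through by $2\theta>0$ and noting that $\tfrac{\theta}{2\rho}=\alpha$, this is exactly $\langle\Gamma x,x-y\rangle\ge\alpha\|\Gamma x\|^2$, i.e.\ $\alpha$-quasicomonotonicity. Since every manipulation in the chain is reversible (we only add or subtract equal quantities and multiply or divide by the positive constants $\theta$ and $2\theta$), the argument proves both implications at once.

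I do not anticipate a genuine obstacle; the only points requiring care are the arithmetic with the coefficient $\tfrac{1-\rho}{\rho}$ and verifying that $\rho=\tfrac{\theta}{2\alpha}\in(0,+\infty)$ so that the phrase ``conically $\rho$-quasiaveraged'' is well-defined, which is immediate from $\alpha,\theta\in(0,+\infty)$. The computation parallels the comonotone/averaged equivalence in \cite[Proposition~3.3]{BDP22}, the sole difference being that here $y$ is restricted to $\zer\Gamma=\Fix T$ rather than ranging over all of $\mathcal{H}$.
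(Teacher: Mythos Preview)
Your proposal is correct and follows essentially the same approach as the paper: both arguments note that $\Fix T=\zer\Gamma$, expand $\|Tx-y\|^2$ and $\|(\Id-T)x\|^2$ in terms of $\Gamma$, substitute into the quasiaveragedness inequality with $\rho=\theta/(2\alpha)$, cancel $\|x-y\|^2$, and reduce algebraically to $\langle\Gamma x,x-y\rangle\ge\alpha\|\Gamma x\|^2$, observing that every step is an equivalence.
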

\begin{proof}
Take $\bar{x}\in\Fix T=\zer\Gamma$ and take any $x\in\mathcal{H}$.
Then $T$ is conically $\frac{\theta}{2\alpha}$-quasiaveraged
\begin{alignat*}{2}
&\iff\quad
&\|Tx-\bar{x}\|^2 + \frac{1-\theta/(2\alpha)}{\theta/(2\alpha)}\|(\Id-T)x\|^2 &\leq \|x-\bar{x}\|^2\\
&\iff  &
\|x-\theta\Gamma x - \bar{x}\|^2
+\frac{2\alpha-\theta}{\theta}
\|\theta\Gamma x\|^2 &\leq \|x-\bar{x}\|^2\\
&\iff &
-2\scal{\theta\Gamma x}{x-\bar{x}}
+\|\theta\Gamma x\|^2
+(2\alpha-\theta)\theta\|\Gamma x\|^2&\leq 0\\
&\iff & \alpha\|\Gamma x\|^2 &\leq \scal{\Gamma x}{x-\bar{x}},
\end{alignat*}
which means $\Gamma$ is $\alpha$-quasicomonotone.
\end{proof}

\subsection{Kronecker product, the vector \texorpdfstring{$\bOne$}{1}, and the diagonal set \texorpdfstring{$\Sigma$}{Sigma}}

For a matrix $M$, we denote its \emph{range} and \emph{kernel} by $\ran M$ and $\ker M$, respectively, its \emph{transpose} by $M^\top $, and its \emph{pseudo-inverse} by $M^\dag$. For a positive diagonal matrix $\DB=\diag(\eta_1,\ldots,\eta_n)$ where $\eta_i \in(0,+\infty)$, we denote its square root matrix by $\sqrt{\DB}=\diag(\sqrt{\eta_1},\ldots,\sqrt{\eta_n})$. Clearly, $\sqrt{\DB}\sqrt{\DB}=\DB$.

In an abuse of notation, we identify matrices $M=(M_{ij})\in\mathbb{R}^{n\times m}$ with the Kronecker product $(M \otimes \Id) \colon \mathcal{H}^m \to \mathcal{H}^n$. Therefore, for any $\bz =(z_1,\dots,z_m)\in\mathcal{H}^m$, we have
\begin{equation*}
M\bz 
:=(M\otimes \Id)\bz 
=\left(\sum_{j=1}^{m}M_{1j}z_j,
\sum_{j=1}^{m}M_{2j}z_j,\ldots,
\sum_{j=1}^{m}M_{nj}z_j\right)\in\mathcal{H}^n.
\end{equation*}
Let $A_1, \dots, A_n \colon\mathcal{H} \rightrightarrows \mathcal{H}$ be maximally monotone operators. Given $\bx=(x_1, \dots, x_n)\in \mathcal{H}^n$, we define the operator $\bA\colon \mathcal{H}^n \rightrightarrows \mathcal{H}^n$ by $\bA\bx := A_1 x_1\times \dots\times A_n x_n$. It follows that $\bA $ is also a maximally monotone operator. As a consequence, its resolvent $J_{\bA }\colon \mathcal{H}^n \rightarrow \mathcal{H}^n$ is given by $J_{\bA }=(J_{A_1}, \dots, J_{A_n})$.
Note that we can write $\bA $ and $J_{\bA }$ as diagonal operators
\begin{align*}
\bA =\begin{bmatrix}
A_1 & & 0\\
 & \ddots & \\
0 & & A_n
\end{bmatrix}\text{ and }
J_\bA =\begin{bmatrix}
J_{A_1} & & 0\\
    & \ddots & \\
0   & & J_{A_n}
\end{bmatrix}.
\end{align*}
Similarly, we define the diagonal operators
\begin{align*}
\bB :=\begin{bmatrix}
B_1 &  & 0\\
    & \ddots & \\
0   &    & B_r
\end{bmatrix},\ 
\bL :=\begin{bmatrix}
L_1 &  & 0\\
    &  \ddots& \\
0   &  & L_r
\end{bmatrix},\ 
\bC :=\begin{bmatrix}
C_1 &  & 0\\
    & \ddots & \\
0   & & C_p
\end{bmatrix},
\end{align*}
where the operators $B_k$, $L_k$, and $C_j$ are from the setup of problem~\eqref{primal_prob}.

We denote $\bOne_n=(1,\ldots,1)\in\mathbb{R}^n$. When the context is clear, we will drop the subscript and simply write $\bOne$, for example, if $M\in\mathbb{R}^{m\times n}$, then $\bOne^\top M\bOne=(\bOne_m)^\top M(\bOne_n)$. 
Using this notation, we can define the \emph{diagonal set}
\begin{align*}
\Sigma_n := \{ \bx =(x, \dots, x) \in \mathcal{H}^n\}
=\{(\bOne_n) x\ :\ x\in\mathcal{H}\},
\end{align*}
and thus its the orthogonal set is
$\Sigma_n^\bot=\{\hat{\bx} \in\mathcal{H}^n \,:\, \bOne^\top \hat{\bx} = 0\}$. Again, we will drop the subscript and denote $\bOne x$ when the dimension is clear. In particular, for $x\in\mathcal{H}$,
$\bA(\bOne x) =(A_1 x,\ldots, A_n x)$,
$\bL(\bOne x) =(L_1 x,\ldots, L_r x)$, and 
$\bC(\bOne x) =(C_1 x,\ldots, C_p x)$.

\subsection{Solution sets of the primal and dual problems}

The sets of solutions to \eqref{primal_prob} and \eqref{dual_prob} are denoted by $\mathcal{P}$ and $\mathcal{D}$, respectively. Let $\bZ $ be a subset of $\mathcal{P} \times \mathcal{D}$ such that
\begin{align}\label{e:Z_set}
    \bZ  &= \Bigg\{(x,\bs )\in\mathcal{H}\times \prod_{k=1}^r\mathcal{G}_k : \ \bs =(s_1,\dots,s_r)\in \prod_{k=1}^r B_k L_k x \text{~and~} 0 \in \sum_{i=1}^n A_i x + \sum_{k=1}^r L_k^*s_k + \sum_{j=1}^p C_j x  
    \Bigg\} \notag\\
    &=\bigg\{(x,\bs )\in \mathcal{H}\times \prod_{k=1}^r\mathcal{G}_k :\ \bs\in \bB\bL(\bOne x)
    \text{~~and~~}
    0\in \bOne^\top \bA(\bOne x) + \bOne^\top \bL^*\bs  + \bOne^\top \bC(\bOne x) \bigg\}.
\end{align}
An element of $\bZ$ is referred to as a \emph{primal-dual solution} to \eqref{primal_prob} and \eqref{dual_prob} due to the following statement.
\begin{equation}
    \label{e:sol_sets}
    \mathcal{P} \neq \varnothing \iff \bZ \neq \varnothing \iff \mathcal{D} \neq \varnothing.
\end{equation}
To prove \eqref{e:sol_sets}, we have
\begin{align*}
    \exists \ x \in \mathcal{P}
    &\iff \exists \ x\in\mathcal{H}, \quad 0 \in \sum_{i=1}^n A_i x + \sum_{k=1}^r L_k^* B_k L_k x + \sum_{j=1}^p C_jx\\
    &\iff \exists \ x\in\mathcal{H}, \quad 0\in \bOne^\top \bA(\bOne x) + \bOne^\top \bL^*\bB\bL(\bOne x) + \bOne^\top \bC(\bOne x)  \\
    &\iff \exists \ (x,\bs)\in \mathcal{H}\times \prod_{k=1}^r\mathcal{G}_k, \quad 
    \begin{cases}
        -\bOne^\top \bL^* \bs &\in \bOne^\top \bA(\bOne x) + \bOne^\top \bC(\bOne x),\\
        \bs &\in \bB\bL(\bOne x)
    \end{cases}\\
    &\iff \exists \ (x,\bs)\in \mathcal{H}\times \prod_{k=1}^r\mathcal{G}_k, \quad
    \begin{cases}
        \bOne x &\in \Big(\bOne^\top \bA(\bOne x) + \bOne^\top \bC(\bOne x)\Big)^{-1} \Big(-\bOne^\top \bL^* \bs\Big)\\
        \bL(\bOne x) &\in \bB^{-1}\bs
    \end{cases}\\
    &\iff \exists \ \bs \in \prod_{k=1}^r\mathcal{G}_k, \quad 0 \in - \bL\Big(\bOne^\top \bA(\bOne x) + \bOne^\top \bC(\bOne x)\Big)^{-1} \Big(-\bOne^\top \bL^* \bs\Big) + \bB^{-1}\bs \\
    &\iff \exists \ \bs \in \prod_{k=1}^r\mathcal{G}_k, \quad 0 \in - L_k\Big(\sum_{i=1}^n A_i + \sum_{j=1}^p C_j\Big)^{-1} \Big(-\sum_{k=1}^r L_k^*s_k\Big) + B_k^{-1} s_k,\ \ k\in\{1,\dots,r\} \\
    &\iff \exists \ \bs \in \mathcal{D}.
\end{align*}

\section{A primal-dual splitting algorithm}
\label{s:framework}

Throughout this section, for each $i\in \{1, \dots, n\}$, each $k\in \{1, \dots, r\}$, and each $j\in \{1, \dots, p\}$, $A_i$ and $B_k$ are maximally monotone operators, $L_k$ is a bounded linear operator with adjoint operator $L_k^*$, and $C_j$ is a monotone and $\ell_j$-Lipschitz continuous operator. 
Let $P\in\mathbb{R}^{n\times p}$, $Q\in\mathbb{R}^{n\times p}$, and $R\in \mathbb{R}^{p\times n}$. Define an auxiliary operator $\Phi:\mathcal{H}^n\to\mathcal{H}^n$ by
\begin{align}\label{e:Phi}
\Phi:= (P-Q)\bC R + Q\bC P^\top.
\end{align}
Let $D =\diag(\delta_1, \dots, \delta_n)\in\mathbb{R}^{n\times n}$ and $\DB = \diag(\eta_1,\dots, \eta_r)\in\mathbb{R}^{r\times r}$ be positive diagonal matrices. Let $M \in \mathbb{R}^{n \times m}$, $N\in \mathbb{R}^{n \times n}$, $H\in\mathbb{R}^{n\times r}$, and $K\in \mathbb{R}^{r\times n}$. Given $\bz\in\mathcal{H}^{m}$ and $\bw\in\prod_{k=1}^r \mathcal{G}_k$, we define the operator $T\colon \mathcal{H}^{m}\times \prod_{k=1}^r \mathcal{G}_k \to \mathcal{H}^{m}\times \prod_{k=1}^r \mathcal{G}_k$ as
\begin{tcolorbox}[boxsep=0pt, left=0pt, right=0pt,
top=-10pt, bottom=0pt,
colback=blue!10!white,colframe=blue!30!white,
boxrule=0pt,breakable, after={\parindent=0pt}]
\begin{align}
\label{e:fix_oper_T}
T(\bz, \bw)
:= \left[
    \begin{array}{c}
        T_1(\bz ,\bw )\\
        T_2(\bz ,\bw )
    \end{array}\right]
    :=\left[
    \begin{array}{c}
        \bz \\
        \bw 
    \end{array}\right] -
    \left[
    \begin{array}{c}
        M^\top\bx\\
        \DB(\bL H^\top\bx - \by)
    \end{array}\right],
\end{align}
\end{tcolorbox}
where $\bx, \by$ are defined via the solution operator $S\colon \mathcal{H}^{m}\times \prod_{k=1}^r \mathcal{G}_k \to \mathcal{H}^{n}\times \prod_{k=1}^r \mathcal{G}_k$ with $\gamma \in (0, +\infty)$
\begin{tcolorbox}[boxsep=0pt, left=0pt, right=0pt, 
top=-10pt, bottom=0pt,
colback=blue!10!white, colframe=blue!30!white,
boxrule=0pt, breakable]
\begin{align}
\label{e:sol_oper_S}
(\bx, \by) := S(\bz, \bw) 
    :=
    \left[
    \begin{aligned}
        &J_{\gamma D^{-1}\bA }\left( D^{-1} \left(M\bz  + N\bx  - \gamma 
        \Phi\bx -\gamma H\bL^*(\DB\bL K\bx -\bw )\right)\right)\\
        &J_{\DB^{-1}\bB }\Big(\bL K\bx -\DB^{-1}\bw  + \bL H^\top \bx \Big)
    \end{aligned}\right].
\end{align}
\end{tcolorbox}

We propose the following primal-dual algorithm.

\begin{tcolorbox}[boxsep=0pt,left=0pt,right=0pt,top=0pt,bottom=0pt, colback=blue!10!white,colframe=blue!30!white,
    boxrule=0pt,breakable]
\begin{algorithm}
\label{algo:full}
Let $\mathbf{z}^0\in\mathcal{H}^m$, $\bw^0\in\prod_{k=1}^r \mathcal{G}_k$, and $(\lambda_t)_{t\in\mathbb{N}} \subset (0,+\infty)$. For each $t\in\mathbb{N}$, compute
\begin{align*}
    (\bz^{t+1}, \bw^{t+1}) := \Big(1-\lambda_t\Big)(\bz^t, \bw^t) + \lambda_t T(\bz^t, \bw^t), 
\end{align*}
where $\bx^t, \by^t$ in $T(\bz^t, \bw^t)$ are determined by 
\begin{align*}
    (\bx^t, \by^t) := S(\bz^t, \bw^t)
\end{align*}
with $T$, $S$ defined by \eqref{e:fix_oper_T} and \eqref{e:sol_oper_S}, respectively.
\end{algorithm}
\end{tcolorbox}

\begin{remark}[Conditions for explicitness]
\label{r:OnAlgo}
Algorithm~\ref{algo:full} can be written in the form
\begin{align*}
\begin{aligned}
&\begin{cases}
x^t_i &=J_{\frac{\gamma}{\delta_i}A_i}\Bigg(\frac{1}{\delta_i}\sum_{j=1}^m M_{ij}z^t_j +\frac{1}{\delta_i}\sum_{j=1}^{n} N_{ij}x^t_j -\frac{\gamma}{\delta_i}\sum_{j=1}^p (P_{ij} -Q_{ij})C_j\Big(\sum_{k=1}^{n} R_{jk}x^t_k\Big) \\ 
&\qquad -\frac{\gamma}{\delta_i}\sum_{j=1}^p Q_{ij}C_j\Big(\sum_{k=1}^{n} P_{kj}x^t_k\Big) - \frac{\gamma}{\delta_i} \sum_{j=1}^r H_{ij}L_j^*\Big(\eta_j L_j\Big(\sum_{k=1}^n K_{jk} x_k^t \Big) - w_j^t\Big)\Bigg), \\
&\qquad \quad i \in \{1, \dots, n\}\\
y_k^t &= J_{\frac{1}{\eta_k}B_k}\Bigg(L_k\Big(\sum_{j=1}^{n} K_{kj}x_j^t\Big) -  \frac{1}{\eta_k} w_k^t + L_k\Big(\sum_{j=1}^n H_{jk} x_j^t\Big)\Bigg), \quad k\in\{1,\dots,r\},
\end{cases}\\
&\begin{cases}
    z^{t+1}_i &= z^t_i -\lambda_t\sum_{j=1}^n M_{ji}x^t_j,\quad i\in \{1, \dots, m\} \\
    w_k^{t+1} &= w_k^t - \lambda_t \eta_k \Big(L_k(\sum_{j=1}^{n} H_{jk} x_j^t) - y_k^t\Big), \quad k\in\{1,\dots, r\}.
\end{cases}
\end{aligned}
\end{align*}
In general, the algorithm is implicit in the sense that the calculation of $\bx^t$ requires the value of $\bx^t$ itself. When the first rows of $N$, $P$, $Q$, and $H$ are zeros, $x_1^k$ is explicitly expressed as 
\begin{align}
\label{eq:explicit_x1}
    x^t_1 &=J_{\frac{\gamma}{\delta_1}A_1}\left(\frac{1}{\delta_1}\sum_{j=1}^m M_{1j}z^t_j\right),
\end{align}
but the remaining components $x_i^t$ may still be implicit. 
Although our convergence analysis of Algorithm~\ref{algo:full} holds in this implicit setting, for practical implementation, we are interested in explicit versions, in which $x_1^t$ depends only on $\bz^t$ and, for each $i\in \{2,\dots, n\}$, the update of $x_i^t$ depends only on $\bz^t$ and components $x_j^t$ with $j<i$ that have already been computed. This property is guaranteed as soon as $N$, $\Phi$, and $H\bL^*\DB\bL K$ are strictly lower triangular, equivalently, for all $k\geq i$, 
\begin{subequations}
\label{a:PQR_lower}
\begin{empheq}[left=\empheqlbrace]{align}
    N_{ij} &= 0, \quad i \le j 
        \label{a:PQR_lower_1a} \\
    (P_{ij} - Q_{ij}) R_{jk} &= 0, \quad j\in\{1,\dots,p\}
        \label{a:PQR_lower_1b} \\
    Q_{ij} P_{kj} &= 0, \quad j\in\{1,\dots,p\}
        \label{a:PQR_lower_1c} \\
    H_{ij} K_{jk} &= 0, \quad j\in\{1,\dots,r\}.
        \label{a:PQR_lower_1d}
\end{empheq}
\end{subequations}
Equations \eqref{a:PQR_lower_1b} and \eqref{a:PQR_lower_1c} mean that, for each $(i,j)\in\{1,\dots,n\}\times\{1,\dots,p\}$, exactly one of the following happens:
\begin{itemize}
    \item $R_{ji} \neq 0,\ P_{1j}=\dots=P_{ij}=0,\ Q_{1j}=\dots = Q_{ij}=0$;
    \item $R_{ji}=0,\ P_{ij} \neq 0,\  Q_{1j}=\dots=Q_{ij}=0$;
    \item $R_{ji}=0,\ P_{ij}=0, \ Q_{ij} \neq 0$.
\end{itemize}
These conditions, in fact, can be visualized nicely using a block structure in which the matrices $P$, $Q$, and $R^\top$ have complementary structure as in Figure~\ref{fig:PQR}. The possible nonzero parts of $R$, $P$, and $Q$ are in blue, green, and red colors, respectively. Similarly, the conditions on $H$ and $K$ are illustrated in Figure~\ref{fig:HK}.
\begin{figure}[htbp]
    \centering
    \begin{subfigure}{0.4\textwidth}
    \centering
    
    \tikzset{every picture/.style={line width=1.5pt}}  

    \begin{tikzpicture}[x=0.75pt,y=0.75pt,yscale=-1,xscale=1]
    
    \draw    (89.5,40.5) -- (119.58,40.5) ;
    \draw    (89.5,40.5) -- (91,265) ;
    \draw    (91,265) -- (116.98,265) ;
    \draw    (311,41.8) -- (283.65,41.8) ;
    \draw    (311,41.8) -- (311,267) ;
    \draw    (311,267) -- (283.65,267) ;
    \draw    (90.87,91.04) -- (153,91) ;
    \draw    (153,91) -- (153.04,119.66) ;
    \draw    (153.04,119.66) -- (235,120) ;
    \draw    (91,182) -- (146,182) ;
    \draw    (146,182) -- (146,207) ;
    \draw    (146,207) -- (203.43,207) ;
    \draw    (203,173) -- (203.43,207) ;
    \draw    (203,173) -- (311.02,173) ;
    \draw    (235,120) -- (235,102) ;
    \draw    (235,102) -- (311,102) ;
    
    \fill[blue!25, draw=none]
      (89.5,40.5) --
      (119.58,40.5) --
      (311,41.8) --
      (311,102) --
      (235,102) --
      (235,120) --
      (153.04,119.66) --
      (153,91) --
      (90.87,91.04) --
      cycle;
    
    \fill[green!25, draw=none]
      (90.87,91.04) --
      (153,91) --
      (153.04,119.66) --
      (235,120) --
      (235,102) --
      (311,102) --
      (311.02,173) --
      (203,173) --
      (203.43,207) --
      (146,207) --
      (146,182) --
      (91,182) --
      cycle;
    
    \fill[red!25, draw=none]
      (91,182) --
      (146,182) --
      (146,207) --
      (203.43,207) --
      (203,173) --
      (311.02,173) --
      (311,267) --
      (283.65,267) --
      (91,265) --
      cycle;
    
    \draw (98,50) node [anchor=north west][inner sep=0.75pt]  [font=\normalsize] [align=left]{\parbox{5cm}{$R_{ji} \neq 0\ (P_{1j}=\dots=P_{ij} =0$;\\
    \hspace*{1.3cm}  $Q_{1j} =\dots =Q_{ij} \ =0)$}};
    \draw (117.71,127.44) node [anchor=north west][inner sep=0.75pt]  [font=\normalsize] [align=left] {\parbox{4cm}{$P_{ij} \neq 0\ ( R_{ji} =0;\\
    \hspace*{0.3cm}Q_{1j} =\dotsc =Q_{ij} =0)$}};
    \draw (108.79,229) node [anchor=north west][inner sep=0.75pt]  [font=\normalsize] [align=left] {\parbox{5cm}{$Q_{ij} \neq 0
    \ ( R_{ji} =0;\ P_{ij} =0)$}};
    \end{tikzpicture}

    \caption{$P, Q, R$ structure.}
    \label{fig:PQR}
    \end{subfigure}
    \hfill
    \begin{subfigure}{0.4\textwidth}
    \centering
    \tikzset{every picture/.style={line width=1.5pt}}  

    \begin{tikzpicture}[x=0.75pt,y=0.75pt,yscale=-1,xscale=1]
    
    \draw    (97.87,112.59) -- (146,113) ;
    \draw    (146,113) -- (146,189.54) ;
    \draw    (173,129) -- (244,129) ;
    \draw    (97.5,34.5) -- (127.58,34.5) ;
    \draw    (97.5,34.5) -- (99,259) ;
    \draw    (99,259) -- (124.98,259) ;
    \draw    (319,35.8) -- (291.65,35.8) ;
    \draw    (319,35.8) -- (319,162) -- (319,261) ;
    \draw    (319,261) -- (291.65,261) ;
    \draw    (146,189.54) -- (173,190) ;
    \draw    (173,190) -- (173,129) ;
    \draw    (244,129) -- (244,170) ;
    \draw    (244,170) -- (280,170) ;
    \draw    (280,149) -- (280,170) ;
    \draw    (280,149) -- (319,149) ;

    \fill[blue!25, draw=none]
      (97.87,112.59) --
      (146,113) --
      (146,189.54) --
      (173,190) --
      (173,129) --
      (244,129) --
      (244,170) --
      (280,170) --
      (280,149) --
      (319,149) --
      (319,35.8) --
      (291.65,35.8) --
      (127.58,34.5) --
      (97.5,34.5) --
      cycle;
    
    \fill[green!25, draw=none]
      (99,259) --
      (124.98,259) --
      (291.65,261) --
      (319,261) --
      (319,162) --
      (319,149) -- 
      (280,149) --
      (280,170) --
      (244,170) --
      (244,129) --
      (173,129) --
      (173,190) --
      (146,189.54) --
      (146,113) --
      (97.87,112.59) --
      cycle;
    
    \draw (128.63,56.87) node [anchor=north west][inner sep=0.75pt]  [font=\normalsize] [align=left] {\parbox{5cm}{$K_{ji} \neq 0\ \\
    ( H_{1j} =\dotsc =H_{ij} =0)$}};
    \draw (144.79,213) node [anchor=north west][inner sep=0.75pt]  [font=\normalsize] [align=left] {\parbox{5cm}{$H_{ij} \neq 0 \ ( K_{ji} =0)$}};

    \end{tikzpicture}
    
    \caption{$H, K$ structure.}
    \label{fig:HK}
    \end{subfigure}
    \caption{Coefficient matrix structure.}
    \label{fig:PQRHK}
\end{figure}
It is worthwhile mentioning that condition \eqref{a:PQR_lower} is weaker and easier to verify than the condition in \cite[Remark 3.1(i)]{DTT26}. Moreover, based on Figure~\ref{fig:PQRHK}, the coefficient matrices can be easily customized to meet specific computational requirements. In practice, $P$, $Q$, and $R$ are usually selected such that each column of $P$ and $Q$ (when $Q\neq0$), as well as each row of R, has exactly one nonzero entry equal $1$. Similarly, $H$ and $K$ can be constructed analogously to, or even chosen equal to, $P$ and $R$, respectively, as described in Section~\ref{s:realizations}.
\end{remark}

Now, we denote $\diag(\ell) :=\diag(\ell_1,\dots,\ell_p)$, set
\begin{align*}
\Omega &:= 2D - N - N^\top - MM^\top, \\
\Psi &:= (H-K^\top)\bL^*\DB\bL(H^\top -K), \\
\Upsilon &:=\begin{cases}
\frac{1}{2}(P-R^\top)\diag(\ell)(P^\top -R) &\text{if~} Q =0, \\
(P-Q)\diag(\ell)(P^\top -Q^\top) + (P-R^\top)\diag(\ell)(P^\top -R) &\text{otherwise},
\end{cases}
\end{align*}
and consider the following assumptions on the coefficient matrices.

\begin{assumption}[Standing assumptions]
\label{a:stand}
~
\begin{enumerate}
 \item\label{a:stand_kerM}
$\ker M^\top = \lspan\{\bOne\}$.
\item\label{a:stand_N}
$\sum_{i,j=1}^n N_{ij} = \bOne^\top N \bOne= \bOne^\top D \bOne= \sum_{i=1}^n \delta_i$.
\item\label{a:stand_PR}
$P^\top \bOne=R\bOne=\bOne$ if $\bC \neq 0$, and $P^\top=R=0$ if $\bC = 0$.
\item\label{a:stand_H}
$H^\top \bOne=\bOne$ if $\bB \neq 0$, and $H^\top=K=0$ if $\bB=0$.
\end{enumerate}
\end{assumption}
\begin{assumption}[Positive semidefiniteness]  
\label{a:PSD} 
There exists $\alpha \in [0,1)$ such that
\begin{align*}
\Omega + \alpha MM^\top - \frac{\gamma}{1+\alpha}\Psi -\gamma \Upsilon \succeq 0.    
\end{align*}
\end{assumption}

\begin{remark}
\label{r:OnAssump}
We make the following observations regarding Assumptions~\ref{a:stand} and \ref{a:PSD}.
\begin{enumerate}
\item\label{r:OnAssump_kerM}
It follows from Assumption~\ref{a:stand}\ref{a:stand_kerM} that 
\begin{align*}
\rank M &=\rank M^\top =n -\dim(\ker M^\top) =n -1, \\
\ran M &=(\ker M^\top)^\perp =\left\{(c_1, \dots, c_n)^\top\in \mathbb{R}^n: \bOne^\top c=\sum_{i=1}^n c_i = 0\right\}, \\
\ker(M^\top \otimes \Id) &=\ker M^\top \otimes \mathcal{H} =\Sigma, \\
\ran(M\otimes \Id) &=(\ker(M\otimes \Id)^\top)^\perp =(\ker(M^\top \otimes \Id))^\perp =\Sigma^\perp.
\end{align*}

\item\label{r:OnAssump_PR}
In view of Assumption~\ref{a:stand}\ref{a:stand_PR}, if $\bC\neq 0$, then we have for all $x \in \mathcal{H}$ that $P^\top (\bOne x) = R(\bOne x) = \bOne x$, and so
\begin{align*}
\Phi(\bOne x) =(P - Q)\bC R(\bOne x)  + Q\bC P^\top(\bOne x)  =(P - Q)\bC (\bOne x)  + Q\bC (\bOne x)  = P\bC(\bOne x),   
\end{align*} 
leading to $\bOne^\top \Phi(\bOne x) = \bOne^\top P\bC(\bOne x) = \bOne^\top\bC(\bOne x)$, which also holds when $\bC =0$.
\item\label{r:OnAssump_QK}
Assumptions~\ref{a:stand} and \ref{a:PSD} imply that $Q^\top\bOne = \bOne$ if $Q\neq 0$, and $K\bOne = \bOne$ if $\bB\neq 0$. Indeed, by Assumption~\ref{a:stand}, $\bOne^\top MM^\top \bOne =0$, $\bOne^\top(2D - N - N^\top)\bOne =0$,
\begin{align*}
\bOne^\top \Upsilon \bOne &=
\begin{cases}
0 &\text{if~} $Q =0$, \\
(\bOne - Q^\top\bOne)^\top \diag(\ell)(\bOne - Q^\top\bOne) \geq 0 &\text{otherwise},
\end{cases} \\
\text{and~}\bOne^\top \Psi \bOne &=
\begin{cases}
0 &\text{if~} \bB =0, \\
\bOne^\top \Psi \bOne = (\bOne - K\bOne)^\top \bL^* \DB\bL(\bOne-K\bOne) \succeq 0 &\text{otherwise}.
\end{cases}
\end{align*}
This together with Assumption~\ref{a:PSD} yields, for all $x\in \mathcal{H}$, 
\begin{align*}
\scal{x}{\bOne^\top\left(\Omega+\alpha MM^\top - \frac{\gamma}{1+\alpha}\Psi - \gamma \Upsilon\right)\bOne x} = -\frac{\gamma}{1+\alpha}\scal{x}{\bOne^\top \Psi\bOne x} - \gamma\scal{x}{\bOne^\top\Upsilon\bOne x} \geq 0.
\end{align*}
Therefore, $\bOne^\top\Upsilon\bOne = 0$ and $\langle x, \bOne^\top\Psi\bOne x\rangle= 0$, which implies that $Q^\top\bOne = \bOne$ if $Q\neq 0$, and $\bL K\bOne = \bL\bOne$ if $\bB\neq 0$. Since $\bL K \bOne = (\sum_{i=1}^n K_{1i} L_1, \dots, \sum_{i=1}^n K_{ri} L_r)^\top$ and $\bL \bOne = (L_1,\dots, L_r)^\top$, we deduce that $(\sum_{i=1}^n K_{1i}, \dots, \sum_{i=1}^n K_{ri})^\top = (1,\dots, 1)^\top$, equivalently, $K\bOne = \bOne$ if $\bB\neq 0$.

\item\label{r:OnAssump_UVX}
By Assumption~\ref{a:stand}\ref{a:stand_kerM}\&\ref{a:stand_PR}, if $\bC\neq 0$, then it follows from \cite[Remark 3.3(iii)]{DTT26} that $U =(P^\top -R)(M^\top )^\dag \in\mathbb{R}^{p\times m}$ solves $UM^\top =P^\top -R$ with minimal norm. Moreover, if additional $Q^\top \bOne =\bOne$, then $V=(P^\top -Q^\top )(M^\top )^\dag\in \mathbb{R}^{p\times m}$ solves $VM^\top =P^\top -Q^\top$ with minimal norm. When $\bC =0$, we have $P =0$ and $R =0$, in which case the associated matrix $U$ is simply $0$. 

Similarly, by Assumption~\ref{a:stand}\ref{a:stand_kerM}\&\ref{a:stand_H}, if $\bB\neq 0$ and $K\bOne = \bOne$, then $X=(H^\top-K)(M^\top)^\dagger \in \mathbb{R}^{r\times m}$ is the minimal norm solution of $XM^\top = H^\top - K$. When $\bB =0$, the associated matrix $X$ is simply $0$.
\end{enumerate}
\end{remark}

\subsection{Fixed-point encoding and preliminary results}

\begin{lemma}[Fixed points and Kuhn--Tucker points]
\label{l:fixzer}
Suppose Assumption~\ref{a:stand} holds. Then the following hold:
\begin{enumerate}
\item\label{l:fixzeros_T}
If $(\bz ,\bw ) \in \Fix T$ and $(\bx ,\by ) = S(\bz ,\bw )$, then $\bx=\bOne x\in\Sigma$, $\by=\bL(\bOne x)$, and $(x,\DB\bL K(\bOne x)-\bw)\in \bZ$.

\item\label{l:fixzeros_AB}
If $( x,\bs ) \in \bZ $, then there exists $\bz\in \mathcal{H}^m$ such that $(\bz ,\DB\bL K(\bOne x) -\bs) \in \Fix T$ and $(\bOne x ,\bL(\bOne x) ) = S(\bz ,\DB\bL K(\bOne x) -\bs)$.
\end{enumerate}
Consequently, $\Fix T \neq \varnothing$ if and only if $\bZ  \neq \varnothing$.
\end{lemma}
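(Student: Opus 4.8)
The plan is to unfold the two resolvents in the definition~\eqref{e:sol_oper_S} of $S$ into monotone inclusions and then use the structural identities in Assumption~\ref{a:stand} to collapse the $n$-fold product relations onto the diagonal $\Sigma$. Given $(\bz,\bw)$ and $(\bx,\by)=S(\bz,\bw)$, put $\bs:=\DB\bL K\bx-\bw$, so that $\DB^{-1}\bw=\bL K\bx-\DB^{-1}\bs$; then the two defining relations of $S$ are equivalent to
\begin{align*}
M\bz + N\bx - \gamma\Phi\bx - \gamma H\bL^*\bs - D\bx &\in \gamma\bA\bx, \\
\bs + \DB\bL H^\top\bx - \DB\by &\in \bB\by.
\end{align*}
The algebraic facts I will use are $\bOne^\top M=0$ and $\ran M=\Sigma^\perp$ (Remark~\ref{r:OnAssump}\ref{r:OnAssump_kerM}), $\bOne^\top N\bOne=\bOne^\top D\bOne=\sum_i\delta_i$ (Assumption~\ref{a:stand}\ref{a:stand_N}), $H^\top\bOne=\bOne$ (Assumption~\ref{a:stand}\ref{a:stand_H}), and $\bOne^\top\Phi(\bOne x)=\bOne^\top\bC(\bOne x)$ with $P^\top\bOne=\bOne$ (Assumption~\ref{a:stand}\ref{a:stand_PR}, Remark~\ref{r:OnAssump}\ref{r:OnAssump_PR}).

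For part~\ref{l:fixzeros_T}: the fixed-point equations $T(\bz,\bw)=(\bz,\bw)$ are $M^\top\bx=0$ and $\DB(\bL H^\top\bx-\by)=0$. Since $\DB\succ0$, the latter gives $\by=\bL H^\top\bx$, and Remark~\ref{r:OnAssump}\ref{r:OnAssump_kerM} turns the former into $\bx\in\Sigma$, say $\bx=\bOne x$; then $H^\top\bOne=\bOne$ yields $\by=\bL H^\top(\bOne x)=\bL(\bOne x)$. Substituting $\by=\bL H^\top\bx$ into the second inclusion above cancels the term $\DB\bL H^\top\bx-\DB\by$, leaving $\bs\in\bB\by=\bB\bL(\bOne x)$, the first defining relation of $\bZ$. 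For the second relation, apply $\bOne^\top$ to the first inclusion with $\bx=\bOne x$: the term $\bOne^\top M\bz$ vanishes; $\bOne^\top N(\bOne x)$ and $\bOne^\top D(\bOne x)$ both equal $(\sum_i\delta_i)x$ and cancel; $\bOne^\top\Phi(\bOne x)=\bOne^\top\bC(\bOne x)=\sum_j C_j x$; and $\bOne^\top H\bL^*\bs=\bOne^\top\bL^*\bs=\sum_k L_k^* s_k$. Since $\bOne^\top\bA(\bOne x)=\sum_i A_i x$, this gives $0\in\sum_i A_i x+\sum_k L_k^* s_k+\sum_j C_j x$, so $(x,\bs)=(x,\DB\bL K(\bOne x)-\bw)\in\bZ$.

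For part~\ref{l:fixzeros_AB}: given $(x,\bs)\in\bZ$, set $\bw:=\DB\bL K(\bOne x)-\bs$ and try to exhibit $\bz$ with $S(\bz,\bw)=(\bOne x,\bL(\bOne x))$; once this holds, the fixed-point conditions $M^\top(\bOne x)=0$ and $\bL H^\top(\bOne x)=\bL(\bOne x)=\by$ follow from $M^\top\bOne=0$ and $H^\top\bOne=\bOne$. With this $\bw$ one has $\DB\bL K(\bOne x)-\bw=\bs$, so the $\by$-relation $\bs+\DB\bL H^\top(\bOne x)-\DB\bL(\bOne x)\in\bB\bL(\bOne x)$ collapses (using $H^\top\bOne=\bOne$ again) to $\bs\in\bB\bL(\bOne x)$, which holds. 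For the $\bx$-relation, use $(x,\bs)\in\bZ$ to pick $\ba=(a_1,\dots,a_n)\in\bA(\bOne x)$ with $\sum_i a_i=-\sum_k L_k^* s_k-\sum_j C_j x$, and set $v:=D(\bOne x)+\gamma\ba-N(\bOne x)+\gamma\Phi(\bOne x)+\gamma H\bL^*\bs$; applying $\bOne^\top$ and using the same cancellations (plus $\bOne^\top\Phi(\bOne x)=\sum_j C_j x$) gives $\bOne^\top v=\gamma\big(\sum_i a_i+\sum_j C_j x+\sum_k L_k^* s_k\big)=0$, i.e.\ $v\in\Sigma^\perp=\ran M$; choose $\bz$ with $M\bz=v$. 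Then $D^{-1}\big(M\bz+N(\bOne x)-\gamma\Phi(\bOne x)-\gamma H\bL^*\bs\big)=\bOne x+\gamma D^{-1}\ba$ with $\gamma D^{-1}\ba\in\gamma D^{-1}\bA(\bOne x)$, so $J_{\gamma D^{-1}\bA}$ of it equals $\bOne x$; hence $(\bOne x,\bL(\bOne x))=S(\bz,\bw)$ and $(\bz,\bw)\in\Fix T$. The final equivalence $\Fix T\neq\varnothing\iff\bZ\neq\varnothing$ is then immediate from parts~\ref{l:fixzeros_T} and~\ref{l:fixzeros_AB}.

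Most of this is careful bookkeeping --- tracking which copy of $\bOne$ (length $n$, $p$, or $r$) appears in each contraction and keeping the single-valued terms ($\Phi$, $\bC$, $\bL^*$) separate from the set-valued ones ($\bA$, $\bB$). The genuinely load-bearing step is the construction of $\bz$ in part~\ref{l:fixzeros_AB}: one has to recognize that the freedom in $\bz$ (ranging over $\ran M=\Sigma^\perp$) together with the freedom in selecting $\ba\in\bA(\bOne x)$ is exactly enough, and no more, to encode a Kuhn--Tucker point, so that the solvability requirement ``$v\in\ran M$'' is, after applying $\bOne^\top$, precisely the monotone-inclusion part of the definition of $\bZ$. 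The degenerate cases are handled by the same computations with the relevant terms removed: $\bC=0$ forces $P=R=0$ and $\Phi=0$, while $\bB=0$ forces $H=K=0$ and removes the dual block.
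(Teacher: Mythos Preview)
Your proof is correct and follows essentially the same approach as the paper's own proof: both arguments unfold the resolvents in~\eqref{e:sol_oper_S} into inclusions, use $\ker M^\top=\Sigma$ and $H^\top\bOne=\bOne$ to place $\bx$ on the diagonal and identify $\by$, and then contract with $\bOne^\top$ (using $\bOne^\top M=0$, $\bOne^\top N\bOne=\bOne^\top D\bOne$, $\bOne^\top\Phi(\bOne x)=\bOne^\top\bC(\bOne x)$, $\bOne^\top H=\bOne^\top$) to pass between the $n$-fold product inclusion and the Kuhn--Tucker condition defining $\bZ$; in part~\ref{l:fixzeros_AB} both construct $\bz$ by checking that the required right-hand side lies in $\ran(M\otimes\Id)=\Sigma^\perp$. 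Your early introduction of $\bs=\DB\bL K\bx-\bw$ streamlines the bookkeeping slightly compared to the paper's intermediate variables $\bu,\bv$, but the underlying argument is the same.
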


\begin{proof}
\ref{l:fixzeros_T}: Assume that $(\bz ,\bw ) \in \Fix T$ and $(\bx ,\by ) = S(\bz ,\bw )$. Then 
\begin{align*}
(\bz ,\bw )-T(\bz ,\bw )=(M^\top \bx ,\DB (\bL H^\top \bx -\by ))=(0, 0),
\end{align*}
and so $\bx \in \ker(M^\top \otimes \Id) =\Sigma$ due to  Remark~\ref{r:OnAssump}\ref{r:OnAssump_kerM}. Thus, $\bx=\bOne x$ for some $x\in\mathcal{H}$ and $\by=\bL H^\top\bx=\bL H^\top(\bOne x) = \bL(\bOne x)$.

Since $(\bx ,\by )=(\bOne x,\by) = S(\bz ,\bw )$, it holds that
\begin{align*}
D^{-1}\Big( M \bz + N (\bOne x)  - \gamma \Phi(\bOne x)  - \gamma H\bL^*(\DB\bL K(\bOne x)-\bw )\Big) &\in \bOne x + \gamma D^{-1}\bA (\bOne x),\\
    \bL K(\bOne x)- \DB^{-1}\bw + \bL H^\top(\bOne x) &\in \by + \DB^{-1}\bB\by,
\end{align*}
which leads to
\begin{align*}
    M\bz +N(\bOne x) -D(\bOne x)  &\in \gamma\bA (\bOne x)  + \gamma \Phi(\bOne x)  +\gamma H\bL^*(\DB\bL K(\bOne x)-\bw ),\\
    \DB\bL K(\bOne x)-\bw  &\in \bB\by = \bB\bL(\bOne x).
\end{align*}
Therefore, $M\bz +N(\bOne x) -D(\bOne x)\in \gamma\bA (\bOne x)  + \gamma \Phi(\bOne x)  +\gamma  H\bL^* \bB\bL(\bOne x)$. This together with Remark~\ref{r:OnAssump}\ref{r:OnAssump_PR} and Assumption~\ref{a:stand}\ref{a:stand_kerM}\ref{a:stand_N}\ref{a:stand_H} implies that
\begin{multline*}
\gamma\Big(\bOne^\top\bA (\bOne x) + \bOne^\top \bL^*\bB\bL (\bOne x) +
\bOne^\top\bC(\bOne x)\Big)=\gamma\Big(\bOne^\top\bA (\bOne x) + \bOne^\top H\bL^*\bB\bL(\bOne x) +
\bOne^\top \Phi(\bOne x)\Big)\\
\ni \bOne^\top M\bz + \bOne^\top N(\bOne x) -\bOne^\top D(\bOne x)
= \bOne^\top M\bz = 0^\top \bz = 0.
\end{multline*}
Hence, $(x,\DB\bL K(\bOne x)-\bw)\in\bZ$ as claimed.

\ref{l:fixzeros_AB}: Assume that $(x,\bs)\in\bZ$. Then $\bs\in\bB\bL(\bOne x)$ and $0\in \bOne^\top\bA(\bOne x) + \bOne^\top \bL^*\bs + \bOne^\top\bC(\bOne x)$. So, there exists $\ba\in\bA(\bOne x)$ such that
\begin{equation}\label{e:250903c}
0= \bOne^\top\ba + \bOne^\top \bL^*\bs + \bOne^\top\bC(\bOne x).
\end{equation}
Define $\bu:=\bOne x + \gamma D^{-1}\ba \in\bOne x + \gamma D^{-1}\bA(\bOne x)$ and
$\bv:= \bL(\bOne x)+ \DB^{-1}\bs
\in \bL(\bOne x) + \DB^{-1}\bB\bL(\bOne x)$.
It follows that
\begin{equation}\label{e:250903a}
\bOne x=J_{\gamma D^{-1}\bA}(\bu)
\text{~~and~~} 
\by:=\bL(\bOne x)=J_{\DB^{-1}\bB}(\bv).
\end{equation}
Define $\bw:= \DB\bL K(\bOne x) -\bs$, we have
\begin{equation}\label{e:250903b}
\bv = \bL(\bOne x) + \DB^{-1}\bs = \bL K(\bOne x) -\DB^{-1}\bw  + \bL (\bOne x)
\end{equation}
Next, we will find $\bz\in\mathcal{H}^m$ such that
\begin{equation}\label{e:250903d}
\bu  =D^{-1}\Big( M \bz  + N (\bOne x)  -\gamma \Phi(\bOne x)  - \gamma H\bL^*(\DB\bL K(\bOne x) -\bw )\Big),
\end{equation}
i.e., $M\bz =D\bu  - N(\bOne x)  + \gamma \Phi(\bOne x)  + \gamma H\bL^*(\DB\bL K(\bOne x) -\bw )$. Thus, to establish the existence of $\bz$, it suffices to prove that
\begin{equation*}
D\bu  - N(\bOne x)  + \gamma \Phi(\bOne x)  + \gamma H\bL^*(\DB\bL K(\bOne x) -\bw )
\in \ran(M\otimes \Id) = \Sigma^\perp
=\{\hat\bx\in\mathcal{H}^n: \bOne^\top\hat{\bx}=0\}.
\end{equation*}
Indeed, we check that
\begin{align*}
&\bOne^\top \Big(D\bu  - N (\bOne x)  + \gamma \Phi\bx  + \gamma H\bL^*(\DB\bL K(\bOne x) -\bw )\Big)\\
&=\bOne^\top D(\bOne x) + \gamma \bOne^\top\ba  - \bOne^\top N (\bOne x)  + \gamma \bOne^\top \Phi(\bOne x)  + \gamma \bOne^\top H\bL^*\bs\\
&=\gamma \Big(\bOne^\top\ba + \bOne^\top C(\bOne x) + \bOne^\top \bL^*\bs\Big)=0,
\end{align*}
due to $\bOne^\top D(\bOne x) =\bOne^\top N(\bOne x)$ by Assumption~\ref{a:stand}\ref{a:stand_N}, $\bOne^\top \Phi(\bOne x)=\bOne^\top C(\bOne x)$ by Remark~\ref{r:OnAssump}\ref{r:OnAssump_PR}, and \eqref{e:250903c}. Therefore, we have proved the existence of $\bz$ satisfying \eqref{e:250903d}.

Now, combining \eqref{e:250903a}, \eqref{e:250903b}, and \eqref{e:250903d}, we have that $(\bOne x,\by)=S(\bz,\bw)$. Finally, we check that $T(\bz ,\bw ) =(\bz ,\bw ) -(M^\top (\bOne x) , \DB\bL H^\top(\bOne x) -\DB\by )=(\bz,\bw)$, i.e., $(\bz,\bw)\in\Fix T$.
\end{proof}

The following lemma characterizes the cluster points of the sequence generated by Algorithm~\ref{algo:full}.

\begin{lemma}
\label{l:cluster}
Suppose Assumption~\ref{a:stand} holds. Let $(\bz ^t, \bw ^t)_{t\in \mathbb{N}}$ and $(\bx^t, \by ^t)_{t\in \mathbb{N}}$ be the sequences generated by Algorithm~\ref{algo:full} and suppose $(\Id-T)(\bz ^t,\bw ^t)\to 0$ as $t\to +\infty$. Then the following hold:
\begin{enumerate}
\item\label{l:cluster_aux}
For all $i, j\in \{1, \dots, n\}$ and $k\in \{1, \dots, r\}$, $x^t_i -x^t_j\to 0$ and $y^t_k -L_k x^t_j\to 0$ as $t\to +\infty$.
\item\label{l:cluster_main}
For every weak cluster point $(\bar{\bz}, \bar{\bw}, \bar{\bx}, \bar{\by})$ of $(\bz^t, \bw^t, \bx^t, \by^t)_{t\in\mathbb{N}}$, it holds that $(\bar{\bz},\bar{\bw})\in \Fix T$, $\bar{\bx} =\bOne \bar{x}$, and $\bar{\by} =\bL(\bOne \bar{x})$ with $(\bar{x}, \DB\bL K\bar{\bx} -\bar{\bw})\in \bZ$; moreover, if the first rows of $N$, $P$, $Q$, and $H$ are zeros, then $\bar{x} =J_{\frac{\gamma}{\delta_1} A_1}(\frac{1}{\delta_1} \sum_{j=1}^m M_{1j}\bar{z}_j)$.
\end{enumerate}
\end{lemma}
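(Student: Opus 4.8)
The plan is to turn the residual hypothesis into strong‑convergence statements for the auxiliary sequences, then extract part~(i) by a kernel argument, and finally pass to the weak limit in the resolvent identities for part~(ii), the last step being the delicate one.

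First I would note that, by the definition \eqref{e:def_T} of $T$ and $(\bx^t,\by^t)=S(\bz^t,\bw^t)$, one has $(\Id-T)(\bz^t,\bw^t)=\theta\bigl(M^\top\bx^t,\ \DB(\bL H^\top\bx^t-\by^t)\bigr)$, so the assumption gives $M^\top\bx^t\to 0$ and, since $\theta>0$ and $\DB$ is invertible, $\bL H^\top\bx^t-\by^t\to 0$, both strongly. For~(i), Remark~\ref{r:OnAssump}\ref{r:OnAssump_kerM} gives $\ker(M^\top\otimes\Id)=\Sigma$, so the restriction of $M^\top$ to $\Sigma^\perp$ is injective, hence bounded below by some $c>0$; writing $\bx^t=\bOne\bar x^t+\hat\bx^t$ with $\hat\bx^t:=P_{\Sigma^\perp}\bx^t$, we get $\|\hat\bx^t\|\le c^{-1}\|M^\top\bx^t\|\to 0$, whence $x^t_i-x^t_j=\hat x^t_i-\hat x^t_j\to 0$. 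For the second part of~(i), write $y^t_k-L_kx^t_j=\bigl(y^t_k-(\bL H^\top\bx^t)_k\bigr)+L_k\bigl(\sum_iH_{ik}(x^t_i-x^t_j)\bigr)$, using $\sum_iH_{ik}=(H^\top\bOne)_k=1$ from Assumption~\ref{a:stand}\ref{a:stand_H} (the case $\bB=0$ being trivial); the first term tends to $0$ by the strong convergence just obtained and the second by the first claim of~(i) and boundedness of $L_k$.

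For~(ii), along a subsequence $(t_\nu)$ realizing the weak cluster point, $\hat\bx^{t_\nu}\to 0$ strongly together with $\bx^{t_\nu}\rightharpoonup\bar\bx$ forces $P_{\Sigma^\perp}\bar\bx=0$, i.e.\ $\bar\bx=\bOne\bar x$ for some $\bar x\in\mathcal H$; then $\bL H^\top\bx^{t_\nu}\rightharpoonup\bL(\bOne\bar x)$ (by $H^\top\bOne=\bOne$) and $\by^{t_\nu}-\bL H^\top\bx^{t_\nu}\to 0$ give $\bar\by=\bL(\bOne\bar x)$. To obtain the inclusion, unfold $(\bx^t,\by^t)=S(\bz^t,\bw^t)$ into $\ba^t\in\bA\bx^t$ and $\bb^t\in\bB\by^t$ with $\gamma\ba^t=M\bz^t+(N-D)\bx^t-\gamma\Phi\bx^t-\gamma H\bL^*(\DB\bL K\bx^t-\bw^t)$ and $\bb^t=\DB\bL K\bx^t-\bw^t+\DB(\bL H^\top\bx^t-\by^t)$. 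Three structural observations drive the limit: (a) $\DB(\bL H^\top\bx^t-\by^t)\to 0$, so $\bb^t-(\DB\bL K\bx^t-\bw^t)\to 0$ strongly; (b) since $P^\top\bOne=R\bOne=\bOne$ (Assumption~\ref{a:stand}\ref{a:stand_PR}), part~(i) gives $(P^\top\bx^t)_j-x^t_1\to 0$ and $(R\bx^t)_j-x^t_1\to 0$ strongly, so $\ell_j$‑Lipschitz continuity of $C_j$ yields $\Phi\bx^t-P\bC(\bOne x^t_1)\to 0$ strongly; (c) summing the first identity against $\bOne^\top$ and using $\bOne^\top M=0$, $\bOne^\top N\bOne=\bOne^\top D\bOne$, and $\bOne^\top H=\bOne^\top P=\bOne^\top$, the combination $\bOne^\top\ba^t+\bOne^\top\bC(\bOne x^t_1)+\sum_kL_k^*(\DB\bL K\bx^t-\bw^t)_k\to 0$ strongly. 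I would then test the monotonicity inequalities of the (maximally monotone) product operators $\bA$ and $\bB$ against arbitrary diagonal graph points, add the monotonicity inequalities of the $C_j$ evaluated at $x^t_1$ — the double occurrence of $\bC$ in $\Phi$, legitimized by $P^\top\bOne=R\bOne=\bOne$, is exactly what lets the Lipschitz terms be absorbed, à la Tseng — and pass to the weak limit along $(t_\nu)$: every cross term that is a priori only ``weak$\cdot$bounded'' cancels using the strong convergences above, the strong convergence $x^t_i-x^t_j\to 0$, and the skew‑symmetry of the $L_k$‑coupling, while the surviving terms converge by ``weak$\cdot$fixed'' pairing. The limiting inequality characterizes $(\bar x,\DB\bL K(\bOne\bar x)-\bar\bw)$ as a primal‑dual solution, i.e.\ it lies in $\bZ$; combining this with $M^\top\bar\bx=0$ and $\bL H^\top\bar\bx=\bar\by$ verifies $S(\bar\bz,\bar\bw)=(\bar\bx,\bar\by)$, hence $T(\bar\bz,\bar\bw)=(\bar\bz,\bar\bw)$ (here the invariance $P_{\ker M}\bz^t\equiv P_{\ker M}\bz^0$ coming from $\bz^{t+1}-\bz^t\in\ran M^\top$ is useful to match the particular limit $\bar\bz$ with the $\bz$ produced by Lemma~\ref{l:fixzer}\ref{l:fixzeros_AB}). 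If the first rows of $N,P,Q,H$ vanish, evaluating \eqref{eq:explicit_x1} at the fixed point $(\bar\bz,\bar\bw)$ gives $\bar x=J_{\frac{\gamma}{\delta_1}A_1}\bigl(\tfrac{1}{\delta_1}\sum_jM_{1j}\bar z_j\bigr)$.

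The main obstacle is the weak‑limit passage just described: the $A_i$ are set‑valued and are evaluated at the distinct, only weakly convergent points $x^t_i$, while the $C_j$ are merely monotone and Lipschitz, hence not weakly sequentially continuous, so neither the resolvents nor $\bC$ commutes with weak limits. The remedy is to stay on the product space, where $\bA$ and $\bB$ are maximally monotone without any qualification; to exploit $\ker M^\top=\lspan\{\bOne\}$ and $P^\top\bOne=R\bOne=H^\top\bOne=\bOne$ so that the mismatches $x^t_i-x^t_1$ and the $\Sigma^\perp$‑components of the iterates only ever appear multiplied by strongly‑null factors; and to use the built‑in double evaluation of $\bC$ in $\Phi$ to cancel the Lipschitz contributions, exactly as in the forward‑backward‑forward analysis. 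Carrying out all these cancellations simultaneously inside the combined $\bA$/$\bB$/$C_j$ monotonicity inequality is the technical heart of the argument; everything else is bookkeeping.
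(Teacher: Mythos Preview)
Your argument for part~(i) is correct and essentially matches the paper's. The gap is in part~(ii), at the step ``$S(\bar\bz,\bar\bw)=(\bar\bx,\bar\by)$, hence $(\bar\bz,\bar\bw)\in\Fix T$''.

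Testing the monotonicity of $\bA$ against \emph{diagonal} points $\bu=\bOne u$ and summing only yields aggregate information about $\bOne^\top\ba^t$; combined with $\bB$ and the $C_j$ as you outline, this can indeed give $(\bar x,\bar\bs)\in\bZ$ (with $\bar\bs=\DB\bL K\bar\bx-\bar\bw$), but it does not give the componentwise inclusions $\frac{\delta_i}{\gamma}(\bar u_i-\bar x)-\Phi_i\bar\bx\in A_i\bar x$ that are needed to verify $S(\bar\bz,\bar\bw)=(\bar\bx,\bar\by)$. Testing against non-diagonal points would retain componentwise information but destroys the cancellations you rely on, since $\langle x_i^t-u_i,a_i^t\rangle$ then becomes a genuine product of two merely weakly convergent sequences. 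Your patch via Lemma~\ref{l:fixzer}\ref{l:fixzeros_AB} does not close the gap either: that lemma produces \emph{some} $\bz'$ with $(\bz',\bar\bw)\in\Fix T$, but $M\bz'$ depends on the choice of $\ba\in\bA(\bOne\bar x)$, and matching $M\bz'=M\bar\bz$ is equivalent to asking that the weak limit of $\ba^t+\Phi\bx^t$ lie in $(\bA+\Phi)(\bOne\bar x)$ componentwise --- precisely what you have not established. The kernel invariance fixes only the $\ker M$-component of $\bz$, not $M\bz$.

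The paper's route is genuinely different and delivers the componentwise data directly. It absorbs the forward part by setting $\phi_ix:=\Phi_i(\bOne x)$, so that $A_i+\phi_i$ is maximally monotone and $\Phi_i\bx^t-\phi_ix_i^t\to 0$ strongly (your observation~(b)). It then builds a single maximally monotone operator $\mathcal S$ on $\mathcal H^{n+r}$ whose diagonal blocks are $(A_1+\phi_1)^{-1},\dots,(A_{n-1}+\phi_{n-1})^{-1},(A_n+\phi_n),B_1^{-1},\dots,B_r^{-1}$, plus a skew linear coupling, and rewrites the iteration as an inclusion for $\mathcal S$ whose \emph{output} is the vector $\big(x_1^t-x_n^t,\dots,x_{n-1}^t-x_n^t,\,s^t,\,y_1^t-L_1x_n^t,\dots,y_r^t-L_rx_n^t\big)$. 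The point of using inverses for $i<n$ and for the $B_k$ is exactly that this output then converges \emph{strongly} to $0$: part~(i) handles all coordinates except $s^t$, and $s^t\to 0$ follows from $\bOne^\top M=0$, $\bOne^\top(N-D)\bOne=0$, $\bOne^\top H=\bOne^\top$ (essentially your~(c)). Since the input to $\mathcal S$ converges weakly, weak--strong sequential closedness of $\gra\mathcal S$ gives the limiting inclusion componentwise, from which both $(\bar x,\bar\bs)\in\bZ$ and the resolvent identities $S(\bar\bz,\bar\bw)=(\bar\bx,\bar\by)$, hence $(\bar\bz,\bar\bw)\in\Fix T$, follow at once.
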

\begin{proof}
\ref{l:cluster_aux}: Let $c :=(c_1, \dots, c_n)\in \mathbb{R}^n$ be any vector such that $\sum_{i=1}^n c_i =0$. By Remark~\ref{r:OnAssump}\ref{r:OnAssump_kerM}, $c\in \ran M$, and so $c = Md$ for some $d\in \mathbb{R}^{m}$. As $t\to +\infty$, since $\bz ^t -T_1(\bz ^t, \bw^t)\to 0$, one has 
\begin{align*}
c^\top \bx^t = d^\top M^\top\bx^t = d^\top(\bz^t - T_1(\bz ^t,\bw ^t)) \to 0.
\end{align*}
For all $i,j\in \{1, \dots, n\}$ with $i\neq j$, choosing~$c_i =1, c_j =-1$, and all other entries equal to zeros, we derive
\begin{align}\label{eq:xlxi}
x^t_i -x^t_j =c^\top\bx^t\to 0.
\end{align}
Next, since $\by^t -\bL H^\top \bx^t =-\DB^{-1}(\bw^t -T_2(\bz ^t, \bw^t))\to 0$, using Assumption~\ref{a:stand}\ref{a:stand_H} and \eqref{eq:xlxi}, we have that, for all $j\in \{1, \dots, n\}$,
\begin{align*}
\by^t -\bL(\bOne x^t_j) =\by^t -\bL H^\top \bx^t +\bL H^\top(\bx^t -\bOne x^t_j)\to 0,    
\end{align*}
which completes the proof of \ref{l:cluster_aux}.

\ref{l:cluster_main}: For each $i\in \{1, \dots, n\}$, $\bx  =(x_1, \dots, x_n)\in \mathcal{H}^n$ and $x\in \mathcal{H}$, we set 
\begin{align*}
\Phi\bx:=(\Phi_1\bx, \dots, \Phi_n\bx)
\text{~~and~~} \phi_i x := \Phi_i(\bOne x),    
\end{align*}
where $\Phi$ is given by \eqref{e:Phi}, which means
\begin{align*}
\Phi_i\bx =\sum_{j=1}^{p}\left((P_{ij}-Q_{ij})C_j\left(\sum_{k=1}^n R_{jk}{x_k}\right)+Q_{ij}C_j\left(\sum_{k=1}^n P_{kj}x_k\right)\right),
\end{align*}
and, by Remark~\ref{r:OnAssump}\ref{r:OnAssump_PR},
\begin{align}
\label{eq:phi_C}
\phi_i x =\left(P\bC (\bOne x)\right)_i.
\end{align}
For each $t\in \mathbb{N}$, set $\bu ^t :=D^{-1}\Big(M\bz ^t +N \bx^t -\gamma H\bL^*(\DB\bL K\bx^t -\bw ^t)\Big)$ and $\bv^t :=\bL K\bx^t -\DB^{-1}\bw ^t +\bL H^\top \bx^t$. Then $\bx^t =J_{\gamma D^{-1} \bA}(\bu^t -\gamma D^{-1}\Phi\bx^t)$ and $\by^t =J_{\DB^{-1}\bB}(\bv^t)$, which are equivalent to 
\begin{align*}
\frac{\delta_i}{\gamma}(u_i^t -x_i^t) -\Phi_i\bx^t&\in A_i x_i^t, \quad i\in \{1, \dots, n\} \\
\text{and}\quad \eta_k (v_k^t-y_k^t)&\in B_k y_k^t, \quad k\in\{1,\dots,r\}.
\end{align*}
This can be written as
\begin{align*}
x^t_i -x^t_n&\in (A_i +\phi_i)^{-1}\left(\frac{\delta_i}{\gamma}(u^t_i -x^t_i) -\Phi_i\bx^t +\phi_i x^t_i\right) -x^t_n, \quad i\in \{1, \dots, n-1\}, \\
s^t&\in (A_n +\phi_n)(x^t_n) +\sum_{i=1}^{n-1} \left(\frac{\delta_i}{\gamma}(u^t_i -x^t_i) -\Phi_i\bx^t +\phi_i x^t_i\right) +\sum_{k=1}^r L_k^*(\eta_k (v^t_k -y^t_k)), \\
y^t_k -L_k x^t_n&\in B_k^{-1}(\eta_k (v_k^t-y_k^t)) -L_k x^t_n, \quad k\in\{1,\dots,r\},
\end{align*}
where 
\begin{align}\label{eq:s^t}
s^t :=\sum_{i=1}^n \frac{\delta_i}{\gamma}(u^t_i -x^t_i) -\sum_{i=1}^n (\Phi_i\bx^t -\phi_i x^t_i) +\sum_{k=1}^r L_k^*(\eta_k (v^t_k -y^t_k)).
\end{align}
Letting $\mathcal{S}\colon \mathcal{H}^{n+r} \rightrightarrows \mathcal{H}^{n+r}$ be given by
\begin{align*}
\mathcal{S} := \diag\left(\begin{bmatrix}
(A_1+\phi_1)^{-1} \\
\vdots \\
(A_{n-1}+\phi_{n-1})^{-1} \\
(A_n+\phi_n)\\
B_1^{-1}\\
\vdots\\
B_r^{-1}
\end{bmatrix}\right) 
+\begin{bmatrix}
0 & \dots & 0 & -\Id & 0 & \dots & 0\\
\vdots & \ddots & \vdots & \vdots & \vdots & \ddots & \vdots\\
0 & \dots & 0 & -\Id & 0 & \dots & 0\\
\Id & \dots & \Id & 0 & L_1^* & \dots & L_r^*\\
0 & \dots & 0 & -L_1 & 0 & \dots & 0\\
\vdots & \ddots & \vdots & \vdots & \vdots & \ddots & \vdots\\
0 & \dots & 0 & -L_r & 0 & \dots & 0\\
\end{bmatrix},    
\end{align*} 
we derive that
\begin{align}\label{eq:inclu}
\begin{bmatrix}
x_1^t -x_n^t \\
\vdots \\
x_{n-1}^t -x_n^t \\
s^t \\
y_1^t - L_1x_n^t\\
\vdots\\
y_r^t - L_r x_n^t
\end{bmatrix}
\in \mathcal{S}\left(\begin{bmatrix}
\frac{\delta_1}{\gamma}(u_1^t -x_1^t) -\Phi_1\bx^t +\phi_1 x_1^t \\
\vdots \\
\frac{\delta_{n-1}}{\gamma}(u_{n-1}^t -x_{n-1}^t) -\Phi_{n-1}\bx^t +\phi_{n-1} x_{n-1}^t\\
x_n^t\\
\eta_1 (v_1^t - y_1^t)\\
\vdots\\
\eta_r (v_r^t - y_r^t)
\end{bmatrix}\right).
\end{align}
As $C_1,\dots, C_p \colon \mathcal{H}\rightarrow\mathcal{H}$ are monotone and Lipschitz continuous, they are maximally monotone operators with full domain. By \eqref{eq:phi_C}, $\phi_1,\dots, \phi_n$ are also maximally monotone operators. Using \cite[Corollary 25.5(i), Example 20.35]{BC17}, $\mathcal{S}$ is a maximally monotone operator as the sum of a maximally monotone operator and a skew symmetric linear operator.

Now, we show that the left-hand side of \eqref{eq:inclu} converges strongly to $0$. In view of \ref{l:cluster_aux}, it suffices to show that $s^t\to 0$ as $t\to +\infty$. We note that, for each $i\in \{1, \dots, n\}$,
\begin{align}\label{eq:Phiphi}
\Phi_i\bx^t -\phi_i x^t_i =\Phi_i\bx^t -\Phi_i(\bOne x^t_i)\to 0 \text{~~as~} t\to +\infty
\end{align}
due to $\bx^t -\bOne x^t_i\to 0$ by \ref{l:cluster_aux} and $\Phi_i$ is Lipschitz continuous by the Lipschitz continuity of $C_1, \ldots,C_p$. By the definitions of $\bu^t$ and $\bv^t$, and Remark~\ref{r:OnAssump}\ref{r:OnAssump_kerM},
\begin{align*}
D\bu^t -N\bx^t +\gamma H\bL^*\DB(\bv^t -\bL H^\top \bx^t) =M\bz^t\in \ran(M\otimes \Id). 
\end{align*}
Noting from Assumption~\ref{a:stand}\ref{a:stand_H} that $\bOne^\top H =\bOne^\top$, we have $\bOne^\top(D\bu^t -N\bx^t) +\gamma \bOne^\top \bL^*\DB(\bv^t -\bL H^\top \bx^t) =0$, which implies that
\begin{align}\label{eq:uxvy}
&\sum_{i=1}^n \frac{\delta_i}{\gamma}(u^t_i -x^t_i) +\sum_{k=1}^r L_k^*(\eta_k (v^t_k -y^t_k)) \notag \\
&= \frac{1}{\gamma}\bOne^\top D(\bu^t -\bx^t) +\bOne^\top \bL^*\DB(\bv^t -\by^t) \notag \\
&= \frac{1}{\gamma} \bOne^\top (N\bx^t -D\bx^t) -\bOne^\top \bL^*\DB(\bv^t -\bL H^\top \bx^t) +\bOne^\top \bL^*\DB(\bv^t -\by^t) \notag \\
&= \frac{1}{\gamma}\bOne^\top (N -D)\bx^t +\bOne^\top \bL^*\DB(\bL H^\top\bx^t -\by^t) \notag \\
&= \frac{1}{\gamma}\bOne^\top (N -D)(\bx^t -\bOne x^t_n) + \bOne^\top \bL^*(\bw^t -T_2(\bz ^t, \bw^t))\to 0 \text{~~as~} t\to +\infty, 
\end{align}
where we use $\bOne^\top (N -D)(\bOne x^t_n) =0$ (by Assumption~\ref{a:stand}\ref{a:stand_N}) and $\bx^t -\bOne x^t_i\to 0$ (by \ref{l:cluster_aux}). Combining \eqref{eq:Phiphi} and \eqref{eq:uxvy} with \eqref{eq:s^t}, we deduce that $s^t\to 0$ as $t\to +\infty$.
    
Since $(\bar{\bz}, \bar{\bw}, \bar{\bx}, \bar{\by})$ is a weak cluster point of $(\bz^t, \bw^t, \bx^t, \by^t)_{t\in\mathbb{N}}$, there exists a subsequence, denoted by $(\bz^t, \bw^t, \bx^t, \by^t)_{t\in\mathbb{N}}$ without relabeling, such that $(\bz^t, \bw^t, \bx^t, \by^t)\rightharpoonup (\bar{\bz}, \bar{\bw}, \bar{\bx}, \bar{\by})$. It follows from \ref{l:cluster_aux} that $\bar{\bx} =\bOne \bar{x}$ and $\bar{\by} =\bL(\bOne \bar{x})$ for some $\bar{x}\in\mathcal{H}$. We also have that 
\begin{align*}
&\bu^t\rightharpoonup \bar{\bu} =D^{-1}(M\bar{\bz} +N\bar{\bx} -\gamma H\bL^*(\DB\bL K\bar{\bx} -\bar{\bw})) \\ 
\text{and~} &\bv^t\rightharpoonup \bar{\bv} =\bL K\bar{\bx} -\DB^{-1}\bar{\bw} +\bL H^\top \bar{\bx} =\bL K\bar{\bx} -\DB^{-1}\bar{\bw} +\bL (\bOne \bar{x}).    
\end{align*}
By combining with \eqref{eq:uxvy}, 
\begin{align}\label{eq:uxvy0}
\sum_{i=1}^n \frac{\delta_i}{\gamma}(\bar{u}_i -\bar{x}) +\sum_{k=1}^r L_k^*\eta_k(\bar{v}_k -\bar{y}_k) =0.
\end{align}
Since the graph of a maximally monotone operator is sequentially closed in the weak-strong topology \cite[Proposition~20.38(ii)]{BC17}, passing to the limit in \eqref{eq:inclu} and using \ref{l:cluster_aux} and \eqref{eq:Phiphi}, we obtain that
\begin{align*}
\begin{bmatrix}
0 \\
\vdots \\
0 \\
0\\
0\\
\vdots \\
0
\end{bmatrix}
\in
\mathcal{S}\left(\begin{bmatrix}
\frac{\delta_1}{\gamma}(\bar{u}_1 -\bar{x})\\
\vdots \\
\frac{\delta_{n-1}}{\gamma}(\bar{u}_{n-1} -\bar{x})\\
\bar{x}\\
\eta_1(\bar{v}_1 - \bar{y}_1)\\
\vdots\\
\eta_r(\bar{v}_r - \bar{y}_r)
\end{bmatrix}\right),
\end{align*}
which together with \eqref{eq:uxvy0} yields
\begin{align}\label{eq:sol}
\begin{aligned}
\frac{\delta_i}{\gamma}(\bar{u}_i -\bar{x}) - \phi_i \bar{x} &\in A_i \bar{x}, \quad i\in \{1, \dots, n-1\}, \\
\frac{\delta_n}{\gamma}(\bar{u}_n - \bar{x}) -\phi_n\bar{x} = -\sum_{i=1}^{n-1} \frac{\delta_i}{\gamma}(\bar{u}_i - \bar{x})-\sum_{k=1}^{r} L_k^*\eta_k(\bar{v}_k - \bar{y}_k) - \phi_n\bar{x} &\in A_n\bar{x},\\
\eta_k(\bar{v}_k - \bar{y}_k) &\in B_k L_k \bar{x}, \quad k\in\{1,\dots,r\},
\end{aligned}
\end{align}
Therefore,
\begin{align}
\bar{x} &= J_{\frac{\gamma}{\delta_i}A_i}\Big(\bar{u}_i - \frac{\gamma}{\delta_i}\phi_i\bar{x}\Big) =J_{\frac{\gamma}{\delta_i}A_i}\Big(\bar{u}_i - \frac{\gamma}{\delta_i}\Phi_i\bar{\bx}\Big), \quad i\in \{1, \dots, n\}, \label{eq:xbar} \\
\bar{y}_k = L_k\bar{x} &= J_{\frac{1}{\eta_k}B_k}(\bar{v}_k), \quad k\in\{1,\dots,r\}, \notag
\end{align}
which implies that $(\bar{\bx},\bar{\by}) = S(\bar{\bz}, \bar{\bw})$, and thus $T(\bar{\bz},\bar{\bw})= (\bar{\bz} - M^\top(\bOne\bar{x}), \bar{\bw} - \DB(\bL(\bOne \bar{x})-\bar{\by})=(\bar{\bz}, \bar{\bw})$. As a result, $(\bar{\bz},\bar{\bw})\in \Fix T$. 

Next, we have from \eqref{eq:phi_C} and Assumption~\ref{a:stand}\ref{a:stand_PR} that $\sum_{i=1}^{n} \phi_i\bar{x} =\sum_{j=1}^p C_j \bar{x}$. By summing up the relations in \eqref{eq:sol}, we derive
\begin{align*}
    -\bOne^\top \bL^* \DB(\bar{\bv} - \bar{\by }) - \bOne^\top \bC(\bOne\bar{x}) &\in \bOne^\top \bA(\bOne\bar{x}), \\
    \DB(\bar{\bv} - \bar{\by })&\in \bB \bL (\bOne\bar{x}).
\end{align*}
Since $\DB(\bar{\bv} -\bar{\by}) =\DB\bL K\bar{\bx } -\bar{\bw} +\DB(\bL(\bOne \bar{x}) -\bar{\by}) =\DB\bL K\bar{\bx } -\bar{\bw}$, it follows that $(\bar{x}, \DB\bL K\bar{\bx }-\bar{\bw }) \in \bZ $. Finally, by \eqref{eq:xbar}, if the first rows of $N$, $P$, $Q$, and $H$ are all zeros, then $\bar{x} =J_{\frac{\gamma}{\delta_1} A_1}(\bar{u}_1 - \frac{\gamma}{\delta_1}\phi_1\bar{x}) =J_{\frac{\gamma}{\delta_1} A_1}(\frac{1}{\delta_1} \sum_{j=1}^m M_{1j}\bar{z}_j)$.
\end{proof}

Next, we establish the convergence of Algorithm~\ref{algo:full} in the presence of Fej\'er monotonicity.

\begin{lemma}
\label{l:wcvg}
Suppose Assumption~\ref{a:stand} holds and the first rows of $N$, $P$, $Q$, and $H$ are entirely zeros. Let $(\bz ^t, \bw ^t)_{t\in \mathbb{N}}$ and $(\bx^t, \by ^t)_{t\in \mathbb{N}}$ be the sequences generated by Algorithm~\ref{algo:full} and suppose that $(\Id-T)(\bz ^t,\bw ^t)\to 0$ as $t\to +\infty$. Then the following hold:
\begin{enumerate}
\item\label{l:wcvg_bounded}
If $(\bz ^t, \bw ^t)_{t\in \mathbb{N}}$ is bounded, then so is $(\bx^t, \by ^t)_{t\in \mathbb{N}}$.
\item\label{l:wcvg_Fejer}
If $(\bz^t, \bw^t)_{t\in \mathbb{N}}$ is Fej\'er monotone with respect to $\Fix T$, then $(\bz^t,\bw^t)\rightharpoonup (\bar{\bz}, \bar{\bw})\in \Fix T$ as $t\to +\infty$.
\item\label{l:wcvg_shadow}
As $t\to +\infty$, if $(\bz^t,\bw^t)\rightharpoonup (\bar{\bz}, \bar{\bw})$, then $(\bx^t,\by ^t) \rightharpoonup (\bar{\bx }, \bar{\by })$ and $(\DB\bL K\bx^t-\bw^t)\rightharpoonup (\DB\bL K\bar{\bx }-\bar{\bw })$ with $\bar{\bx} =\bOne\bar{x}$, $\bar{\by} =\bL(\bOne\bar{x})$, and $(\bar{x},\DB\bL K\bar{\bx}-\bar{\bw})\in \bZ$.
\end{enumerate}
\end{lemma}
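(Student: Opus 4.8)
The plan is to derive all three parts from Lemma~\ref{l:cluster}, whose hypotheses (Assumption~\ref{a:stand} together with $(\Id-T)(\bz^t,\bw^t)\to 0$) are exactly those assumed here. For part~\ref{l:wcvg_bounded}, I would first exploit the vanishing first rows of $N$, $P$, $Q$, $H$: by \eqref{eq:explicit_x1} we have $x_1^t = J_{\frac{\gamma}{\delta_1}A_1}\big(\frac{1}{\delta_1}\sum_{j=1}^m M_{1j}z_j^t\big)$, and since the resolvent of the maximally monotone operator $A_1$ is (firmly) nonexpansive and single-valued, boundedness of $(\bz^t)$ forces boundedness of $(x_1^t)$. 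Lemma~\ref{l:cluster}\ref{l:cluster_aux} then gives $x_i^t - x_1^t \to 0$ and $y_k^t - L_k x_1^t \to 0$ for all $i\in\{1,\dots,n\}$ and $k\in\{1,\dots,r\}$; since convergent sequences are bounded and each $L_k$ is a bounded linear operator, this yields boundedness of $(\bx^t)$ and $(\by^t)$.

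Part~\ref{l:wcvg_Fejer} is a direct application of the Opial-type principle for Fej\'er monotone sequences, namely \cite[Theorem~5.5]{BC17}. Fej\'er monotonicity with respect to $\Fix T$ presupposes $\Fix T\neq\varnothing$ and makes $(\bz^t,\bw^t)$ bounded, so by part~\ref{l:wcvg_bounded} the sequence $(\bx^t,\by^t)$ is bounded too. It remains to verify that every weak sequential cluster point of $(\bz^t,\bw^t)$ lies in $\Fix T$: given a subsequence along which $(\bz^t,\bw^t)\rightharpoonup(\bar{\bz},\bar{\bw})$, I would pass to a further subsequence along which the bounded sequence $(\bx^t,\by^t)$ also converges weakly, apply Lemma~\ref{l:cluster}\ref{l:cluster_main} to the joint weak limit, and read off $(\bar{\bz},\bar{\bw})\in\Fix T$. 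Invoking \cite[Theorem~5.5]{BC17} then gives $(\bz^t,\bw^t)\rightharpoonup(\bar{\bz},\bar{\bw})\in\Fix T$.

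For part~\ref{l:wcvg_shadow}, weakly convergent sequences are bounded, so part~\ref{l:wcvg_bounded} again makes $(\bx^t,\by^t)$ bounded. The key point is uniqueness of weak cluster points: if $(\bar{\bx},\bar{\by})$ is one, then $(\bar{\bz},\bar{\bw},\bar{\bx},\bar{\by})$ is a weak cluster point of the joint sequence, so Lemma~\ref{l:cluster}\ref{l:cluster_main} gives $\bar{\bx}=\bOne\bar{x}$, $\bar{\by}=\bL(\bOne\bar{x})$, $(\bar{x},\DB\bL K\bar{\bx}-\bar{\bw})\in\bZ$, and, thanks again to the vanishing first rows, $\bar{x}=J_{\frac{\gamma}{\delta_1}A_1}\big(\frac{1}{\delta_1}\sum_{j=1}^m M_{1j}\bar{z}_j\big)$. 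Since the last expression depends only on the fixed limit $\bar{\bz}$, the value $\bar{x}$, and hence $(\bar{\bx},\bar{\by})$, is the same for every cluster point; a bounded sequence in a Hilbert space with a unique weak cluster point converges weakly to it, so $(\bx^t,\by^t)\rightharpoonup(\bOne\bar{x},\bL(\bOne\bar{x}))$, and then $\DB\bL K\bx^t-\bw^t\rightharpoonup\DB\bL K\bar{\bx}-\bar{\bw}$ because $\DB\bL K$ is a bounded linear operator. The membership $(\bar{x},\DB\bL K\bar{\bx}-\bar{\bw})\in\bZ$ has already been recorded.

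The only genuinely delicate point is the uniqueness of cluster points in part~\ref{l:wcvg_shadow}, which essentially uses the explicitness hypothesis (the vanishing first rows of $N$, $P$, $Q$, $H$) to pin down $\bar{x}$ in terms of $\bar{\bz}$ alone rather than allowing it to vary with the subsequence; everything else is routine bookkeeping with weak and strong limits together with boundedness.
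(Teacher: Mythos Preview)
Your proposal is correct and follows essentially the same route as the paper: parts~\ref{l:wcvg_Fejer} and~\ref{l:wcvg_shadow} coincide with the paper's proof, and in part~\ref{l:wcvg_bounded} the only cosmetic difference is that you bound $(\by^t)$ via $y_k^t-L_kx_1^t\to 0$ from Lemma~\ref{l:cluster}\ref{l:cluster_aux}, whereas the paper invokes the nonexpansiveness of $J_{\DB^{-1}\bB}$ directly.
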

\begin{proof}
\ref{l:wcvg_bounded}: Since the first rows of $N$, $P$, $Q$, and $H$ are all zeros, we have from \eqref{eq:explicit_x1} that $x^t_1 =J_{\frac{\gamma}{\delta_1}A_1}\Big(\frac{1}{\delta_1}\sum_{j=1}^m M_{1j}z^t_j\Big)$ and $x^0_1 =J_{\frac{\gamma}{\delta_1}A_1}\Big(\frac{1}{\delta_1}\sum_{j=1}^m M_{1j}z^0_j\Big)$. By the nonexpansiveness of $J_{\frac{\gamma}{\delta_1}A_1}$,
\begin{align*}
\|x^t_1 -x^0_1\|\leq \left\|\frac{1}{\delta_1}\sum_{j=1}^m M_{1j}z^t_j -\frac{1}{\delta_1}\sum_{j=1}^m M_{1j}z^0_j\right\|.    
\end{align*}
Since, for each $j\in \{1, \dots, m\}$, $(z^t_j)_{t\in \mathbb{N}}$ is bounded, so is $(x_1^t)_{t\in \mathbb{N}}$, which together with Lemma~\ref{l:cluster}\ref{l:cluster_aux} implies the boundedness of $(\bx^t)_{t\in \mathbb{N}}$. The boundedness of $(\by^t)_{t\in \mathbb{N}}$ then follows from the nonexpansiveness of $J_{\DB^{-1} \bB}$ along with the boundedness of $(\bw^t)_{t\in\mathbb{N}}$, $(\bx^t)_{t\in\mathbb{N}}$, and $\bL$.

\ref{l:wcvg_Fejer}: As $(\bz^t, \bw^t)_{t\in \mathbb{N}}$ is Fej\'er monotone, it is bounded due to \cite[Proposition~5.4(i)]{BC17}. By \ref{l:wcvg_bounded}, $(\bx^t, \by ^t)_{t\in \mathbb{N}}$ is bounded.

Now, let $(\bar{\bz}, \bar{\bw}) =(\bar{z}_1, \dots, \bar{z}_m, \bar{w}_1, \dots, \bar{w}_r)\in \mathcal{H}^m\times \prod_{k=1}^r \mathcal{G}_k$ be an arbitrary weak cluster point of $(\bz^t, \bw^t)_{t\in \mathbb{N}}$. By passing to another subsequences if necessary, there exist $\bar{\mathbf{x}} \in \mathcal{H}^n$ and $\bar{\by}\in \prod_{k=1}^r \mathcal{G}_k$ such that $(\bar{\bz}, \bar{\bw}, \bar{\bx}, \bar{\by})$ is a weak cluster point of $(\bz^t, \bw^t, \bx^t, \by^t)_{t\in\mathbb{N}}$. According to Lemma~\ref{l:cluster}\ref{l:cluster_main}, $(\bar{\bz},\bar{\bw})\in \Fix T$. By \cite[Theorem 5.5]{BC17}, $(\bz^t, \bw^t)_{t\in\mathbb{N}}$ converges weakly to $(\bar{\bz}, \bar{\bw})\in \Fix T$.

\ref{l:wcvg_shadow}: Since $(\bz^t,\bw^t)_{t\in \mathbb{N}}$ converges weakly to $(\bar{\bz}, \bar{\bw})$, it is bounded, so $(\bx^t,\by^t)_{t\in \mathbb{N}}$ is also bounded due to \ref{l:wcvg_bounded}. Let $(\bar{\bx},\bar{\by})\in\mathcal{H}^n\times \prod_{k=1}^r \mathcal{G}_k$ be an arbitrary weak cluster point of $(\bx^t, \by^t)_{t\in\mathbb{N}}$. Then $(\bar{\bz}, \bar{\bw}, \bar{\bx}, \bar{\by})$ is a weak cluster point of $(\bz^t, \bw^t, \bx^t, \by^t)_{t\in\mathbb{N}}$. By Lemma~\ref{l:cluster}\ref{l:cluster_main}, $\bar{\bx} =\bOne \bar{x}$, $\bar{\by} =\bL(\bOne \bar{x})$, $(\bar{x}, \DB\bL K\bar{\bx} -\bar{\bw})\in \bZ$, and $\bar{x} =J_{\frac{\gamma}{\delta_1} A_1}(\frac{1}{\delta_1} \sum_{j=1}^m M_{1j}\bar{z}_j)$. Thus, $\bar{\bx}$ and $\bar{\by}$ do not depend on the choice of cluster points, and we must have $(\bx^t, \by^t) \rightharpoonup (\bar{\bx},\bar{\by})$. Together with the weak convergence of $(\bw^t)_{t\in\mathbb{N}}$, we obtain that $(\DB\bL K\bx^t-\bw^t)\rightharpoonup (\DB\bL K\bar{\bx }-\bar{\bw })$, which completes the proof.  
\end{proof}

We observe that $T=\Id - \Gamma$ with $\Gamma:\mathcal{H}^m\times\prod_{k=1}^r\mathcal{G}_k\to \mathcal{H}^m\times\prod_{k=1}^r \mathcal{G}_k$:
\begin{align}
\label{e:defGG}
(\bz, \bw)
\mapsto
\begin{bmatrix}
M^\top & 0\\
\DB\bL H^\top & - \DB
\end{bmatrix}
S(\bz,\bw)
=\begin{bmatrix}
M^\top & 0\\
\DB\bL H^\top & - \DB
\end{bmatrix}
\begin{bmatrix}
\bx\\
\by
\end{bmatrix},
\end{align}
where $(\bx,\by)=S(\bz,\bw)$. Thus, the algorithm can be written as
\begin{align*}
(\bz^{t+1},\bw^{t+1})=
(\bz^{t},\bw^{t}) - \lambda_t \Gamma(\bz^{t},\bw^{t}).
\end{align*}
Our aim is to prove that $T$ is conically quasiaveraged in the space $\mathcal{H}^m \times \prod_{k=1}^r\mathcal{G}_k$ with the (scaled) inner product
\begin{align}
\scal{(\bz ,\bw )}{(\bar\bz, \bar\bw)}_{\star} := \scal{\bz}{\bar\bz } + \gamma\scal{\DB^{-1}\bw}{\bar\bw}
\end{align}
for $(\bz ,\bw ),(\bar{\bz },\bar{\bw })\in\mathcal{H}^m\times \prod_{k=1}^r\mathcal{G}_k$ and the corresponding induced $\star$-norm
\begin{align*}
\|(\bz,\bw)\|_\star=\sqrt{\scal{(\bz ,\bw )}{(\bz, \bw)}_{\star}}.
\end{align*}

In view of Proposition~\ref{p:quasi_co_averaged}, we will show that $\Gamma$ is quasicomonotone. Thus, the next technical lemma is a key ingredient in our analysis.

\begin{lemma}[A metric inequality for $\Gamma$]
\label{l:GG_ineq}
Let $(\bz,\bw), (\bar\bz,\bar\bw)\in \mathcal{H}^{m}\times \prod_{k=1}^r \mathcal{G}_k$, $(\bx,\by) =S(\bz,\bw)$, $(\bar\bx,\bar\by) =S(\bar\bz,\bar\bw)$, and $\alpha\in \mathbb{R}$. Then
\begin{align*}
&\scal{\Gamma(\bz,\bw)-\Gamma(\bar\bz,\bar\bw)}{(\bz-\bar\bz,\bw-\bar\bw)}_{\star} \notag\\
&\geq \frac{1 -\alpha}{2}\|\Gamma(\bz,\bw)-\Gamma(\bar\bz,\bar\bw)\|_{\star}^2 + \gamma\scal{\bx - \bar\bx}{\Phi\bx-\Phi\bar\bx} +\frac{1}{2}\scal{\bx -\bar\bx}{(\Omega +\alpha MM^\top - \frac{\gamma}{1+\alpha}\Psi)(\bx - \bar\bx)} \notag\\
&\qquad + \frac{\gamma}{2(1+\alpha)}\left\|\alpha\left(\sqrt{\DB}\bL H^\top (\bx-\bar\bx) -\sqrt{\DB}(\by-\bar\by)\right) +\sqrt{\DB} \bL K(\bx-\bar\bx) -\sqrt{\DB} (\by-\bar\by)\right\|^2. 
\end{align*}
\end{lemma}
\begin{proof}
Set $\Delta\bx :=\bx -\bar\bx$, $\Delta\by :=\by -\bar\by$, $\Delta\bz :=\bz -\bar\bz$, and $\Delta\bw :=\bw -\bar\bw$. Then
\begin{align}
\scal{\Gamma(\bz,\bw)-\Gamma(\bar\bz,\bar\bw)}{(\Delta\bz,\Delta\bw)}_{\star}
&=\scal{M^\top \Delta\bx}{\Delta\bz}
+\gamma\scal{(\bL H^\top \Delta\bx - \Delta\by)}{\Delta\bw}.
\label{e:Gamma_inner_prod}
\end{align}
By the definition of $S$ in \eqref{e:sol_oper_S}, we have
$\bx =J_{\gamma D^{-1}\bA }(\bu)$ and $\bar{\bx }=J_{\gamma D^{-1}\bA }(\bar{\bu })$, where 
\begin{align*}
\bu &:= D^{-1}\Big(M\bz + N\bx -\gamma \Phi\bx -\gamma H\bL^*(\DB\bL K\bx-\bw)\Big),\\
\bar{\bu} &:= D^{-1}\Big(M\bar{\bz} + N\bar{\bx} -\gamma \Phi\bar{\bx} -\gamma H\bL^*(\DB\bL K\bar{\bx}-\bar{\bw})\Big).
\end{align*}
So, $D\bu -D\bx  \in \gamma\bA \bx $ and $D\bar{\bu }-D\bar{\bx } \in \gamma\bA \bar{\bx}$. Set $\Delta\bu =\bu -\bar\bu$. Since $\gamma \bA$ is monotone,
\begin{align*}
0 &\leq \scal{\Delta\bx}{D\Delta\bu -D\Delta\bx} \notag \\ 
&= \scal{\Delta\bx}{M\Delta\bz  + N\Delta\bx  - \gamma \Phi\Delta\bx - \gamma H\bL^*(\DB\bL K\Delta\bx -\Delta\bw ) - D\Delta\bx}\\
&=\scal{M^\top \Delta\bx}{\Delta\bz}
+
\scal{\Delta\bx}{(N - D)\Delta\bx}
- \gamma \scal{\Delta\bx}{\Phi\Delta\bx} \notag \\
&\quad - \gamma \scal{\bL H^\top\Delta\bx}{\DB\bL K\Delta\bx}
+ \gamma\scal{\bL H^\top\Delta\bx}{\Delta\bw}.
\end{align*}
Rearranging terms and using \eqref{e:Gamma_inner_prod}, we obtain
\begin{align}
\label{e:Gamma_inner_prod_2}
\scal{\Gamma(\bz,\bw)-\Gamma(\bar\bz,\bar\bw)}{(\Delta\bz,\Delta\bw)}_{\star}
&\geq \scal{\Delta\bx}{(D - N)\Delta\bx} + \gamma \scal{\Delta\bx}{\Phi\Delta\bx} \notag \\
&\qquad + \gamma \scal{\sqrt{\DB}\bL H^\top\Delta\bx}{\sqrt{\DB}\bL K\Delta\bx}
- \gamma\scal{\Delta\by}{\Delta\bw}.
\end{align}
The first term on the right-hand side of \eqref{e:Gamma_inner_prod_2} is rewritten as
\begin{align}
\scal{\Delta\bx}{(D - N)\Delta\bx}
&= \frac{1}{2}\scal{\Delta\bx}{[2D - 2N - MM^\top +\alpha MM^\top]\Delta\bx} + \frac{1-\alpha}{2}\|M^\top \Delta\bx\|^2\notag\\
&= \frac{1}{2}\scal{\Delta\bx}{[2D - N - N^\top - MM^\top +\alpha MM^\top]\Delta\bx} \notag \\
&\qquad + \frac{1-\alpha}{2}\|\Gamma(\bz,\bw) -\Gamma(\bar\bz,\bar\bw)\|_{\star}^2 - \frac{\gamma(1-\alpha)}{2}\|\sqrt{\DB}\bL H^\top\Delta\bx -\sqrt{\DB}\Delta\by\|^2,
\label{e:Gamma_inner_prod_2_term1}
\end{align}
where the last identity follows directly from the definition of $\Gamma$ and $\star$-norm. Next, set $\bv:=\bL K\bx -\DB^{-1}\bw  + \bL H^\top \bx$, $\bar{\bv}:=\bL K\bar{\bx }-\DB^{-1}\bar{\bw } + \bL H^\top \bar{\bx }$. Then $\by =J_{\DB^{-1}\bB }(\bv)$ and $\bar{\by }=J_{\DB^{-1}\bB }(\bar{\bv})$, or equivalently, $\DB(\bv-\by) \in \bB \by $ and $\DB(\bar{\bv}-\bar{\by})\in \bB \bar{\by}$. Since $\gamma>0$ and $\bB$ is monotone,
\begin{align*}
0 &\leq \gamma \scal{\by -\bar\by}{\DB(\bv -\bar\bv) -\DB(\by -\bar\by)} \notag\\
&= \gamma\scal{\Delta\by}{\DB\bL K\Delta\bx} - \gamma \scal{\Delta\by}{\Delta\bw} + 
\gamma\scal{\Delta\by}{\DB\bL H^\top \Delta\bx}
- \gamma\|\sqrt{\DB}\Delta\by\|^2.
\end{align*}
Thus, the fourth term on the right-hand side of \eqref{e:Gamma_inner_prod_2} can be estimated as
\begin{align*}
-\gamma\scal{\Delta\by}{\Delta\bw}
&\geq -\gamma\scal{\sqrt{\DB}\Delta\by}{\sqrt{\DB}\bL K\Delta\bx} - \gamma \scal{\sqrt{\DB}\Delta\by}{\sqrt{\DB}\bL H^\top \Delta\bx}
+\gamma\|\sqrt{\DB}\Delta\by\|^2.
\end{align*}
Adding the third terms on the right-hand sides of \eqref{e:Gamma_inner_prod_2} and \eqref{e:Gamma_inner_prod_2_term1} to this inequality, and setting $\mathbf{a} :=\sqrt{\DB}\bL H^\top\Delta\bx$, $\mathbf{b} :=\sqrt{\DB}\bL K\Delta\bx$, and $\mathbf{c} :=\sqrt{\DB}\Delta\by$, we obtain
\begin{align*}
&\gamma \scal{\sqrt{\DB}\bL H^\top\Delta\bx}{\sqrt{\DB}\bL K\Delta\bx} -\gamma \scal{\Delta\by}{\Delta\bw} - \frac{\gamma(1-\alpha)}{2}\|\sqrt{\DB}\bL H^\top\Delta\bx -\sqrt{\DB}\Delta\by\|^2 \notag\\
&\geq \gamma\left(\scal{\mathbf{a}}{\mathbf{b}} -\scal{\mathbf{c}}{\mathbf{b}} - \scal{\mathbf{c}}{\mathbf{a}} + \|\mathbf{c}\|^2\right) - \frac{\gamma(1-\alpha)}{2}\|\mathbf{a} -\mathbf{c}\|^2 \notag\\
&= -\gamma \left(\scal{\mathbf{a} - \mathbf{c}}{\frac{1-\alpha}{2}\left(\mathbf{a} - \mathbf{c}\right) - \left(\mathbf{b} - \mathbf{c}\right)}\right) \notag\\
&= -\frac{2\gamma}{1+\alpha} \scal{\frac{1+\alpha}{2}(\mathbf{a} - \mathbf{c})}{\frac{1-\alpha}{2}\left(\mathbf{a} - \mathbf{c}\right) - \left(\mathbf{b} - \mathbf{c}\right)} \notag \\
&= \frac{-\gamma}{2(1+\alpha)}\left(\|\mathbf{a} - \mathbf{b}\|^2 - \|\alpha(\mathbf{a} - \mathbf{c}) + (\mathbf{b} - \mathbf{c})\|^2\right),
\end{align*}
which together with \eqref{e:Gamma_inner_prod_2} and \eqref{e:Gamma_inner_prod_2_term1} completes the proof.
\end{proof}

\subsection{Convergence with or without cocoercivity}

\begin{lemma}[Conical quasiaveragedness]
\label{l:quasiaveraged}
Suppose Assumption~\ref{a:stand} holds. Let $(\bz,\bw)\in \mathcal{H}^{m}\times \prod_{k=1}^r \mathcal{G}_k$, $(\bar\bz,\bar\bw)\in \zer\Gamma =\Fix T$, $(\bx,\by) =S(\bz,\bw)$, $(\bar\bx,\bar\by) =S(\bar\bz,\bar\bw)$, and $\alpha\in [0, 1)$. Then
\begin{multline}
\scal{\Gamma(\bz,\bw)}{(\bz-\bar\bz,\bw-\bar\bw)}_{\star} \\
\geq \frac{1 -\alpha}{2}\|\Gamma(\bz,\bw)\|_{\star}^2 +\frac{1}{2}\scal{\bx -\bar\bx}{(\Omega +\alpha MM^\top - \frac{\gamma}{1+\alpha}\Psi -\gamma\Upsilon)(\bx - \bar\bx)} \\  
+ \frac{\gamma}{2(1+\alpha)}\left\|\alpha\left(\sqrt{\DB}\bL H^\top (\bx-\bar\bx) -\sqrt{\DB}(\by-\bar\by)\right) +\sqrt{\DB} \bL K(\bx-\bar\bx) -\sqrt{\DB} (\by-\bar\by)\right\|^2. \label{ineq:quasiaveraged}
\end{multline}
If Assumption~\ref{a:PSD} holds, then $\Gamma$ is $\frac{1-\alpha}{2}$-quasicomonotone and $T$ is conically $\frac{1}{1-\alpha}$-quasiaveraged.
\end{lemma}
\begin{proof}
By Lemma~\ref{l:fixzer}\ref{l:fixzeros_T}, $\bar\bx=\bOne \bar{x}$ for some $\bar{x}\in\mathcal{H}$. It then follows from Remark~\ref{r:OnAssump}\ref{r:OnAssump_PR} that $P^\top \bar{\bx} =R\bar{\bx} =\bar{\bx}$. Therefore,
\begin{align}\label{eq:xPhi}
&\scal{\bx-\bar{\bx}}{\Phi\bx - \Phi\bar{\bx}} \notag \\
&= \scal{P^\top(\bx - \bar{\bx})}{\bC R\bx - \bC R\bar{\bx}} +\scal{Q^\top(\bx -\bar{\bx})}{ [\bC P^\top \bx - \bC P^\top \bar{\bx}]
-[\bC R\bx - \bC R\bar{\bx}]} \notag \\
&= \scal{P^\top(\bx - \bar{\bx})}{\bC R\bx - \bC P^\top \bar{\bx}} + \scal{Q^\top(\bx -\bar{\bx})}{ \bC P^\top \bx -\bC R\bx} \notag \\
&= \scal{P^\top (\bx -\bar{\bx})}{\bC P^\top \bx -\bC P^\top \bar{\bx}} +\scal{(Q^\top-P^\top)(\bx-\bar{\bx})}{\bC P^\top\bx - \bC R\bx} \notag \\
&\geq \scal{(Q^\top-P^\top)(\bx-\bar{\bx})}{\bC P^\top\bx - \bC R\bx},
\end{align}
using the monotonicity of $\bC$.

Let $P^\top_j$, $Q^\top_j$, and $R_j$ denote the $j$-th rows of the matrices $P^\top$, $Q^\top$, and $R$, respectively. By the Cauchy--Schwarz inequality, the Lipschitz continuity of the $C_j$, and the AM-GM inequality, we derive that
\begin{align}
&\scal{(Q^\top-P^\top)(\bx-\bar{\bx})}{\bC P^\top\bx - \bC R\bx} \notag \\
&= \sum_{j=1}^p \scal{(Q^\top_j -P^\top_j)(\bx -\bar{\bx})}{C_j P^\top_j\bx -C_j R_j\bx} \notag \\
&\geq -\sum_{j=1}^p \|(P^\top_j -Q^\top_j)(\bx -\bar{\bx})\|\cdot \|C_j P^\top_j\bx -C_j R_j\bx\| \notag \\
&\geq -\sum_{j=1}^p \ell_j \|(P^\top_j - Q^\top_j)(\bx -\bar{\bx})\|\cdot \|(P^\top_j - R_j)\bx\| \notag \\
&\geq -\frac{1}{2}\sum_{j=1}^p \ell_j\left(\|(P^\top_j -Q^\top_j )(\bx-\bar{\bx})\|^2 + \|(P^\top_j -R_j)\bx\|^2\right) \notag \\
&= -\frac{1}{2}\left(\|\diag(\sqrt{\ell})(P^\top -Q^\top)(\bx-\bar{\bx}) \|^2 + \|\diag(\sqrt{\ell})(P^\top -R)(\bx-\bar{\bx})\|^2\right) \notag \\
&= -\frac{1}{2}\Big(\scal{\bx-\bar{\bx}}{(P-Q)\diag(\ell)(P^\top -Q^\top)(\bx-\bar{\bx})} \notag \\ 
&\qquad +\scal{\bx-\bar{\bx}}{(P-R^\top)\diag(\ell)(P^\top -R)(\bx-\bar{\bx})}\Big), \label{eq:xPhi++}
\end{align}
where the second last equality uses $(P^\top -R)\bar{\bx} =0$. Next, it follows from \eqref{eq:xPhi} and \eqref{eq:xPhi++} that 
\begin{align*}
\scal{\bx-\bar{\bx}}{\Phi\bx - \Phi\bar{\bx}} &\geq -\frac{1}{2}\scal{\bx-\bar{\bx}}{\Upsilon(\bx-\bar{\bx})}.  
\end{align*}
Substituting this into the inequality in Lemma~\ref{l:GG_ineq} and using the fact that $(\bar\bz,\bar\bw)\in \zer\Gamma =\Fix T$, we obtain \eqref{ineq:quasiaveraged}. Since $\alpha \in [0, 1)$, the quasicomonotonicity of $\Gamma$ follows from \eqref{ineq:quasiaveraged}, and the conical quasiaveragedness of $T =\Id -\Gamma$ then follows from Proposition~\ref{p:quasi_co_averaged}.
\end{proof}

\begin{lemma}[Conical averagedness under cocoercivity]
\label{l:averaged}
Suppose, for each $j\in \{1, \dots, p\}$, $C_j$ is $\frac{1}{\ell_j}$-cocoercive, Assumption~\ref{a:stand} holds, and $Q =0$. Let $(\bz,\bw), (\bar\bz,\bar\bw)\in \mathcal{H}^{m}\times \prod_{k=1}^r \mathcal{G}_k$, $(\bx,\by) =S(\bz,\bw)$, $(\bar\bx,\bar\by) =S(\bar\bz,\bar\bw)$, and $\alpha \in [0,1)$. Then
\begin{multline}
\scal{\Gamma(\bz,\bw)-\Gamma(\bar\bz,\bar\bw)}{(\bz-\bar\bz,\bw-\bar\bw)}_{\star} \\
\geq \frac{1-\alpha}{2}\|\Gamma(\bz,\bw)-\Gamma(\bar\bz,\bar\bw)\|_{\star}^2 
+ \frac{1}{2}\scal{\bx -\bar\bx}{\left(\Omega + \alpha MM^\top - \frac{\gamma}{1+\alpha}\Psi - \gamma\Upsilon\right)(\bx - \bar\bx)} \\
+ \frac{\gamma}{2(1+\alpha)}\left\|\alpha\left(\sqrt{\DB}\bL H^\top (\bx-\bar\bx) -\sqrt{\DB}(\by-\bar\by)\right) +\sqrt{\DB} \bL K(\bx-\bar\bx) -\sqrt{\DB} (\by-\bar\by)\right\|^2 \\
+ \frac{\gamma}{4}\left\|(P^\top  -R)(\bx  -\bar{\bx }) +2\diag(\ell)^{-1}(\bC R\bx -\bC R\bar{\bx})\right\|^2_{\diag(\ell)}. \label{ineq:averaged} 
\end{multline}
If Assumption~\ref{a:PSD} holds, then $\Gamma$ is $\frac{1-\alpha}{2}$-comonotone and $T$ is conically $\frac{1}{1-\alpha}$-averaged.
\end{lemma}
\begin{proof}
Since $Q=0$, using the equality $\scal{a}{b} =-\frac{\ell_j}{4}\|a\|^2 +\frac{\ell_j}{4}\Big\|a +\frac{2}{\ell_j}b\Big\|^2 -\frac{1}{\ell_j}\|b\|^2$ and the cocoercivity of the $C_j$, we have the estimation
\begin{align*}
&\scal{\bx-\bar{\bx}}{\Phi\bx - \Phi\bar{\bx}} = \scal{\bx  - \bar{\bx }}{P\bC R\bx  - P\bC R\bar{\bx }}  \notag \\
&= \scal{P^\top (\bx -\bar{\bx })}{\bC R\bx -\bC R\bar{\bx }} \notag \\
&= \scal{(P^\top -R)(\bx  -\bar{\bx })}{\bC R\bx-\bC R\bar{\bx }} + \scal{R\bx  - R\bar{\bx }}{\bC R\bx -\bC R\bar{\bx }} \notag \\
&= \sum_{j=1}^p\scal{(P^\top_j-R_j)(\bx  -\bar{\bx })}{C_j R_j\bx-C_j R_j\bar{\bx }} + \sum_{j=1}^p\scal{ R_j\bx  - R_j\bar{\bx }}{C_j R_j\bx - C_j R_j\bar{\bx }} \notag \\
&\geq -\frac{1}{4}\sum_{j=1}^p \ell_j\|(P^\top_j -R_j)(\bx -\bar{\bx })\|^2 +\frac{1}{4}\sum_{j=1}^p \ell_j\left\|(P^\top_j-R_j)(\bx  -\bar{\bx }) +\frac{2}{\ell_j}(C_j R_j\bx -C_j R_j\bar{\bx})\right\|^2 \notag \\ 
&\qquad - \sum_{j=1}^p \frac{1}{\ell_j}\|C_j R_j\bx - C_j R_j\bar{\bx}\|^2 + \sum_{j=1}^p \frac{1}{\ell_j}\|C_j R_j\bx - C_j R_j\bar{\bx}\|^2 \notag \\
& = -\frac{1}{4}\|\diag(\sqrt{\ell})(P^\top -R)(\bx -\bar{\bx})\|^2 \notag \\ 
&\qquad +\frac{1}{4}\left\|(P^\top  -R)(\bx  -\bar{\bx}) +2\diag(\ell)^{-1}(\bC R\bx -\bC R\bar{\bx})\right\|^2_{\diag(\ell)},
\end{align*}
where $P^\top_j$ and $R_j$ denote the $j$-th rows of the matrices $P^\top$ and $R$, respectively. Noting that
\begin{align*}
\|\diag(\sqrt{\ell})(P^\top -R)(\bx -\bar{\bx})\|^2 &=\scal{\bx -\bar{\bx}}{(P -R^\top)\diag(\ell)(P^\top -R)(\bx -\bar{\bx})}, 
\end{align*}
we obtain
\begin{multline*}
\scal{\bx-\bar{\bx}}{\Phi\bx - \Phi(\bar{\bx})} \geq -\frac{1}{2}\scal{\bx -\bar{\bx}}{\Upsilon(\bx -\bar{\bx})} \\ +\frac{1}{4}\left\|(P^\top  -R)(\bx  -\bar{\bx }) +2\diag(\ell)^{-1}(\bC R\bx -\bC R\bar{\bx})\right\|^2_{\diag(\ell)}, 
\end{multline*}
which implies \eqref{ineq:averaged} due to Lemma~\ref{l:GG_ineq}. Since $\alpha \in [0, 1)$, the comonotonicity of $\Gamma$ follows from \eqref{ineq:averaged}. This together with \cite[Proposition~3.3]{BDP22} implies the conical averagedness of $T =\Id -\Gamma$.
\end{proof}

We now arrive at convergence properties of Algorithm~\ref{algo:full}.

\begin{theorem}[Convergence properties]
\label{t:cvg}
Suppose $\zer(\sum_{i=1}^n A_i + \sum_{k=1}^r L_k^* B_k L_k + \sum_{j=1}^p C_j) \neq \varnothing$ and Assumptions~\ref{a:stand} and \ref{a:PSD} hold. Let \( (\bz ^t,\bw ^t)_{t\in\mathbb{N}}\) and \( (\bx^t,\by ^t)_{t\in\mathbb{N}}\) be the sequences generated by Algorithm~\ref{algo:full} with $(\lambda_t)_{t\in\mathbb{N}}$ in $[0,1-\alpha]$.
\begin{enumerate}
\item\label{t:cvg_wco}
If $\liminf_{t\rightarrow +\infty} \lambda_t(1-\frac{\lambda_k}{1-\alpha}) > 0$, then, as $t\to +\infty$,
\begin{enumerate}
\item\label{t:cvg_wco_zTz} 
$(\Id-T)(\bz ^t,\bw ^t)\to 0$ with $\|\frac{1}{t+1}\sum_{i=0}^t (\Id -T)(\bz^i, \bw^i)\|_{\star} = O(1/\sqrt{t})$.
\item\label{t:cvg_wco_zx}
$(\bz^t,\bw^t)\rightharpoonup (\bar{\bz}, \bar{\bw})\in \Fix T$, $(\bx^t,\by ^t) \rightharpoonup (\bar{\bx }, \bar{\by })$, and $(\DB\bL K\bx^t-\bw^t)\rightharpoonup (\DB\bL K\bar{\bx }-\bar{\bw })$ with $\bar{\bx} =\bOne\bar{x}$, $\bar{\by} =\bL (\bOne\bar{x})$, and $(\bar{x},\DB\bL K\bar{\bx}-\bar{\bw})\in \bZ$, provided that the first rows $N$, $P$, $Q$, and $H$ are entirely zeros. 
\end{enumerate}

\item\label{t:cvg_co}
If $C_1,\dots,C_p$ are cocoercive, $Q =0$, and $\sum_{t=0}^{+\infty} \lambda_t(1-\frac{\lambda_t}{1-\alpha}) = +\infty$, then, as $t\to +\infty$, 
\begin{enumerate}
\item\label{t:cvg_co_zTz} 
$(\Id-T)(\bz ^t,\bw ^t)\to 0$ and if $\liminf_{t\rightarrow+\infty} \lambda_t(1-\frac{2\lambda_t}{2-\gamma\tau}) > 0$, then $\|(\Id-T)(\bz^t, \bw^t)\|_{\star} =   o(1/\sqrt{t})$.
\item\label{t:cvg_co_z}
\( (\bz ^t,\bw ^t) \rightharpoonup (\bar{\bz },\bar{\bw }) \in \Fix T \). 
\item\label{t:cvg_co_Bx}
$\bC R\bx^t \rightarrow \bC R\bar{\bx}$ with $\bar{\bx} = J_{\gamma D^{-1}\bA}\Big(D^{-1}(M\bar{\bz} +N\bar{\bx} - \gamma\Phi\bar{\bx} -\gamma H\bL^*(\DB\bL K\bar{\bx} -\bar{\bw}))\Big)$.
\item\label{t:cvg_co_x}
$(\bx^t,\by ^t) \rightharpoonup (\bar{\bx }, \bar{\by })$ and $(\DB\bL K\bx^t-\bw^t)\rightharpoonup (\DB\bL K\bar{\bx}-\bar{\bw })$ with $\bar{\bx} =\bOne\bar{x}$, $\bar{\by} =\bL(\bOne\bar{x})$, and $(\bar{x},\DB\bL K\bar{\bx}-\bar{\bw})\in \bZ$, provided that the first rows of $N$, $P$, and $H$ are entirely zeros.
\end{enumerate}
\end{enumerate}
\end{theorem}
\begin{proof}
As $\zer(\sum_{i=1}^n A_i + \sum_{k=1}^r L_k^* B_k L_k + \sum_{j=1}^p C_j) \neq \varnothing$, it follows from \eqref{e:sol_sets} and Lemma~\ref{l:fixzer} that $\Fix T \ne \varnothing$.

\ref{t:cvg_wco}: By Lemma \ref{l:quasiaveraged} and \cite[Proposition~2.2]{DTT26}, we obtain \ref{t:cvg_wco_zTz} and the Fej\'er monotonicity of $(\bz^t, \bw^t)_{t\in\mathbb{N}}$ with respect to $\Fix T$. Together with Lemma~\ref{l:wcvg}\ref{l:wcvg_Fejer}--\ref{l:wcvg_shadow}, this implies \ref{t:cvg_wco_zx}.

\ref{t:cvg_co}: First, \ref{t:cvg_co_zTz} and \ref{t:cvg_co_z} follow from Lemma \ref{l:averaged} and \cite[Proposition~2.9]{BDP22}. Next, since $(\bar{\bz },\bar{\bw }) \in \Fix T =\zer \Gamma$ and $P^\top - R = UM^\top$ for some $U\in \mathbb{R}^{p\times m}$ (see Remark~\ref{r:OnAssump}\ref{r:OnAssump_UVX}), we derive from Lemma~\ref{l:averaged} and Assumption~\ref{a:PSD} that, for all $t\in \mathbb{N}$,
\begin{multline*}
\scal{\Gamma(\bz^t,\bw^t)}{(\bz^t-\bar\bz^t,\bw^t-\bar\bw^t)}_{\star} \geq \frac{1-\alpha}{2}\|\Gamma(\bz^t,\bw^t)\|_\star^2\\
+\frac{\gamma}{4}\left\|U(\bz^t-T_1(\bz^t,\bw^t))+ 2\diag(\ell)^{-1}(\bC R\bx^t -\bC R\bar{\bx}^t)\right\|^2_{\diag(\ell)}.
\end{multline*}
Letting $t\to +\infty$ and recalling that $\Gamma(\bz^t, \bw^t) =(\Id -T)(\bz ^t,\bw ^t)\to 0$ and $(\bz ^t,\bw ^t)_{t\in \mathbb{N}}$ is bounded, we obtain \ref{t:cvg_co_Bx}. Finally, \ref{t:cvg_co_x} follows from \ref{t:cvg_co_z} and Lemma~\ref{l:wcvg}\ref{l:wcvg_shadow}. 
\end{proof}

We consider a special case of problem \eqref{primal_prob} when $p=0$, that is,
\begin{align}
    \text{find } x \in \mathcal{H} \text{ such that } 0 \in \sum_{i=1}^n A_i x + \sum_{k=1}^r L_k^* B_k L_k x.
\end{align}
In this case, Algorithm~\ref{algo:full} becomes
\begin{align}
    \left[
    \begin{array}{c}
        \bx^t\\
        \by^t
    \end{array}\right]
    &=
    \left[
    \begin{aligned}
        &J_{\gamma D^{-1}\bA }\left( D^{-1} \left(M\bz^t  + N\bx^t -\gamma H\bL^*(\DB\bL K\bx^t -\bw^t )\right)\right)\\
        &J_{\DB^{-1}\bB }\Big(\bL K\bx^t -\DB^{-1}\bw^t  + \bL H^\top \bx^t \Big)
    \end{aligned}\right],\\
    \left[
    \begin{array}{c}
        \bz^{t+1}\\
        \bw^{t+1}
    \end{array}\right]
    &=\left[
    \begin{array}{c}
        \bz^t \\
        \bw^t 
    \end{array}\right] -
    \lambda_k
    \left[
    \begin{array}{c}
        M^\top\bx^t\\
        \DB(\bL H^\top\bx^t - \by^t)
    \end{array}\right].
\end{align}
The following result is a direct consequence of Theorem~\ref{t:cvg}.
\begin{corollary}[Convergence without single-valued operators]
Suppose $\zer(\sum_{i=1}^n A_i + \sum_{k=1}^r L_k^* B_k L_k ) \neq \varnothing$, Assumption~\ref{a:stand} holds, and $\Omega + \alpha MM^\top - \frac{\gamma}{1+\alpha}\Psi \succeq 0$ for some $\alpha \in [0, 1)$. Let \( (\bz ^t,\bw ^t)_{t\in\mathbb{N}}\) and \( (\bx^t,\by ^t)_{t\in\mathbb{N}}\) be the sequences generated by Algorithm~\ref{algo:full} with $(\lambda_t)_{t\in\mathbb{N}}$ in $[0,1-\alpha]$ satisfying $\sum_{t=0}^{+\infty} \lambda_t(1-\frac{\lambda_t}{1-\alpha}) = +\infty$. Then, as $t\to +\infty$, 
\begin{enumerate}
\item
$(\Id-T)(\bz ^t,\bw ^t)\to 0$. Moreover, $\|(\Id-T)(\bz^t, \bw^t)\|_{\star} = o(1/\sqrt{t})$ if $\liminf_{t\rightarrow +\infty} \lambda_t(1-\frac{\lambda_k}{1-\alpha})> 0$.
\item
$(\bz^t,\bw^t)\rightharpoonup (\bar{\bz}, \bar{\bw})\in \Fix T$.
\item 
$(\bx^t,\by ^t) \rightharpoonup (\bar{\bx }, \bar{\by })$ and $(\DB\bL K\bx^t-\bw^t)\rightharpoonup (\DB\bL K\bar{\bx }-\bar{\bw })$ with $\bar{\bx} =\bOne\bar{x}$, $\bar{\by} =\bL (\bOne\bar{x})$, and $(\bar{x},\DB\bL K\bar{\bx}-\bar{\bw})\in \bZ$, provided that the first rows $N$ and $H$ are entirely zeros.
\end{enumerate}
\begin{proof}
This follows directly from Theorem~\ref{t:cvg}\ref{t:cvg_co} by taking $\mathbf{C}=0$, which implies $\Upsilon=0$.
\end{proof}
\end{corollary}

\begin{remark}[On the parameters $\gamma$, $\eta_k$, and $\lambda_t$]
\label{r:PSD} 
Based on Assumption~\ref{a:PSD}, we can derive the range for the parameters involved.
\begin{enumerate}
\item 
It is clear that if Assumption~\ref{a:PSD} is satisfied with $\alpha \geq 0$, then it is also satisfied with $\alpha' > \alpha$, which may yields a larger choice of $\gamma$. However, as shown in Theorem~\ref{t:cvg}, choosing a larger $\alpha$ results in a smaller admissible range for $\lambda_t\in [0, 1-\alpha]$.

\item \label{r:PSD_AB}
Assumption~\ref{a:PSD} holds if and only if 
\begin{align*}
\gamma \leq \lambda_{\min}\left(\Omega+\alpha MM^\top, \frac{1}{1+\alpha}\Psi + \Upsilon\right),   
\end{align*}
where $\lambda_{\min}(X ,Y) :=\inf_{x\notin \ker Y} \frac{\scal{x}{Xx}}{\scal{x}{Yx}}$ denotes the infimum of the generalized Rayleigh quotient. We note that the resolvent parameter of $\bA$ is $\gamma D^{-1}$, rather than $\gamma$, and each $A_i$ is thus associated with the resolvent parameter $\frac{\gamma}{\delta_i}$. Similarly, the resolvent parameter of $\bB$ is $\DB^{-1}$, so each $B_k$ corresponds to the parameter $\frac{1}{\eta_k}$.

\item\label{r:PSD_gammaeta} 
In view of Remark~\ref{r:OnAssump}\ref{r:OnAssump_QK}--\ref{r:OnAssump_UVX}, there exist $U\in \mathbb{R}^{p\times m}$ and $X\in\mathbb{R}^{r\times m}$ such that $P^\top - R = UM^\top$ and $H^\top - K = XM^\top$, and if $Q\neq 0$, then there also exists $V\in \mathbb{R}^{p\times m}$ satisfying $P^\top - Q^\top = VM^\top$. Thus, $\Psi =MX^\top \bL^*\DB \bL XM^\top$ and $\Upsilon = M\Theta M^\top$, where
\begin{align}
\label{def:Theta}
\Theta  := 
\begin{cases}
\frac{1}{2}U^\top\diag(\ell)U &\text{if~} Q=0,\\
U^\top\diag(\ell)U + V^\top\diag(\ell)V &\text{otherwise}.
\end{cases}
\end{align}
When $\Omega = \kappa MM^\top$ for some $\kappa \in [0,+\infty)$, Assumption~\ref{a:PSD} can be expressed in a more explicit form as $M((\kappa +~\alpha)\Id -\frac{\gamma}{1+\alpha} X^\top \bL^*\DB \bL X -\gamma \Theta )M^\top \succeq 0$, which holds whenever 
\begin{align*}
\frac{\gamma}{1+\alpha}\lambda_{\max}(X^\top X)\max_{1\leq k \leq r} \eta_k \|L_k\|^2 +\gamma\lambda_{\max}(\Theta ) \leq \kappa + \alpha.
\end{align*}
\end{enumerate}
\end{remark}

\section{Realizations of Algorithm~\ref{algo:full}}
\label{s:realizations}

\begin{example}[Algorithm for $n=1$, $r=1$, and $p=1$]
Consider a special case of problem~\eqref{primal_prob} with $n=1$, $r=1$, and $p=1$, namely,
\begin{align*}
\text{find } x \in \mathcal{H} \text{ such that } 0 \in A x + L^* B L x + Cx.
\end{align*}
To solve this problem, we apply Algorithm~\ref{algo:full} with $n=2$, $r=1$, $p=1$, and setting $A_1 = 0$, $A_2 = A$. The coefficient matrices are then given by $Q=0$,
\begin{align*}
M = \begin{bmatrix}
1\\
-1
\end{bmatrix}, \,
N = \begin{bmatrix}
0 & 0\\
2 & 0
\end{bmatrix}, \,
D = \begin{bmatrix}
1 & 0\\
0 & 1
\end{bmatrix}, \,
\DB = \begin{bmatrix}
\eta\\
\end{bmatrix}, \,
H = \begin{bmatrix}
0\\
1
\end{bmatrix}, \, 
K = \begin{bmatrix}
1 & 0
\end{bmatrix},
P = \begin{bmatrix}
0\\
1
\end{bmatrix}, \, 
R = \begin{bmatrix}
1 & 0
\end{bmatrix}.
\end{align*}
Algorithm~\ref{algo:full} now reduces to
\begin{align}
\label{eq:xy}
&\begin{cases}
x^t &= J_{\gamma A}(z^t - \gamma Cz^t - \gamma L^* (\eta Lz^t - w^t))\\
y^t &= J_{\frac{1}{\eta} B}\left(Lz^t - \frac{1}{\eta} w^t + Lx^t\right),
\end{cases} \\
\label{eq:zw}
&\begin{cases}
z^{t+1} = z^t - \lambda_t(z^t - x^t)\\
w^{t+1} = w^t - \lambda_t\eta (Lx^t - y^t).
\end{cases}
\end{align}
Using Moreau decomposition \cite[Theorem 14.3(ii)]{BC17}, $y^t$ in \eqref{eq:xy} becomes
\begin{align*}
y^t = Lz^t - \frac{1}{\eta} w^t + Lx^t - \frac{1}{\eta} J_{\eta B^{-1}}\left(\eta Lz^t - w^t + \eta Lx^t\right).
\end{align*}
Set $u^t :=\eta Lz^t -w^t$ and $v^t := u^t +\eta Lx^t -\eta y^t =\eta Lz^t -w^t +\eta Lx^t -\eta y^t$. Then \eqref{eq:xy} becomes 
\begin{align*}
x^t =J_{\gamma A}(z^t - Cz^t -\gamma L^*u^t) \text{~~and~~} v^t =J_{\eta B^{-1}}(u^t +\eta Lx^t).    
\end{align*}
Since $w^t =\eta Lz^t -u^t$ and $\eta (Lx^t -y^t) =v^t -u^t$, the second equation in \eqref{eq:zw} becomes $\eta Lz^{t+1} -u^{t+1} =\eta Lz^t -u^t -\lambda_t (v^t -u^t)$, or equivalently,
\begin{align*}
u^{t+1} =u^t +\lambda_t (v^t -u^t) +\eta(Lz^{t+1} -Lz^t).
\end{align*}
We obtain the algorithm
\begin{align*}
&\begin{cases}
x^t &= J_{\gamma A}(z^t - Cz^t - \gamma L^*u^t)\\
v^t &= J_{\eta B^{-1}}(u^t + \eta Lx^t),
\end{cases} \\
&\begin{cases}
z^{t+1} = z^t - \lambda_t(z^t - x^t)\\
u^{t+1} = u^t + \lambda_t (v^t - u^t) + \eta(Lz^{t+1} -Lz^t).
\end{cases}
\end{align*}
In turn, when $\lambda_t = 1$, this algorithm simplifies to 
\begin{align}
\label{algo:similar_ConVu}
\begin{cases}
z^{t+1} = J_{\gamma A}(z^t - Cz^t - \gamma L^* u^t)\\
u^{t+1} = J_{\eta B^{-1}}(u^t + \eta Lz^{t+1}) + \eta(Lz^{t+1} - Lz^t),
\end{cases}
\end{align}
which closely resembles, but is not exactly the same as, the Condat--V\~u algorithm \cite{Con13, Vu13}, where the dual update takes the form $u^{t+1} = J_{\eta B^{-1}}(u^t + \eta L(2z^{t+1} - z^t))$. 

It is straightforward to verify that Assumption~\ref{a:stand} holds. Moreover, $\Omega =MM^\top$ and $H^\top - K = X M^\top, P^\top - R = UM^\top$ with $X = U= [-1]$. Hence, by Remark~\ref{r:PSD}\ref{r:PSD_gammaeta}, Assumption~\ref{a:PSD} is satisfied whenever $\alpha=0$ (so that $\lambda_t =1$) and $\gamma\eta\|L\|^2 + \frac{\gamma\ell}{2} \leq 1$. 
\end{example}

\begin{example}[Algorithm for $r =0$]
We consider another special case of problem \eqref{algo:full} when $r=0$, that is,
\begin{align*}
    \text{find } x \in \mathcal{H} \text{ such that } 0 \in \sum_{i=1}^n A_i x + \sum_{j=1}^p C_jx.
\end{align*}
With $H=0$ and $K=0$, Algorithm~\ref{algo:full} becomes
\begin{align}
\label{algo:DTT26}
\begin{cases}
\bx^t &= J_{\gamma D^{-1}\bA}\left(D^{-1} \left(M\bz^t + N\bx^t - \gamma ((P-Q)\bC R\bx^t + Q\bC P^\top\bx^t)\right)\right) \\
\bz^{t+1} &= \bz^t - \lambda_t M^\top\bx^t,
\end{cases}
\end{align}
which not only recovers but also improves \cite[Algorithm 1]{DTT26} significantly. When $Q=0$ and each $C_j$ is cocoercive, \eqref{algo:DTT26} reduces to \cite[Algorithm 1]{ACGN25}. The scheme \eqref{algo:DTT26} further extends many existing algorithms including the frugal and decentralized splittings \cite{Tam23}, the \emph{forward-backward algorithms devised by graphs} \cite{ACL24}, the forward-backward and forward-reflected-backward algorithms for ring networks \cite{AMTT23}, the forward-reflected-backward algorithm \cite{MT20}, the sequential and parallel forward-Douglas--Rachford algorithms \cite{BCLN22}, the \emph{generalized forward-backward algorithm} \cite{RFP13}. Moreover, the scheme generates different product-space formulations of the Davis--Yin algorithm, including a reduced dimensional variant, and a generalization of Ryu splitting algorithm; see \cite[Section~4]{DTT26}.

In this case, Assumptions~\ref{a:PSD} reduces to 
\begin{align}
\label{a:PSD_old}
\Omega +\alpha MM^\top -\gamma \Upsilon =\Omega +M(\alpha \Id -\gamma \Theta )M^\top \succeq 0 \text{~for some~} \alpha \in [0,1),
\end{align}
where $\Theta $ is given by \eqref{def:Theta}. The introduction of $\alpha$ yields significantly weaker assumption than \cite[Assumption 3.2(v)]{DTT26} and helps unifies the key inequalities and its variants in \cite[Lemma 3.6 and Remark 3.7]{DTT26}, thereby allowing a larger explicit stepsize range. To be more specific, one sufficient condition for \eqref{a:PSD_old} is 
\begin{align*}
\Omega \succeq 0 \text{~~and~~} \gamma \leq \frac{\alpha}{\lambda_{\max}(\Theta )} \text{~for some~} \alpha \in (0,1),    
\end{align*}
which includes the case when
\begin{align*}
\Omega \succeq 0,\ \gamma < \frac{1}{\lambda_{\max}(\Theta )}, \text{~~and~~} \alpha =\lambda_{\max}(\Theta ).    
\end{align*}
When $\ell_1=...=\ell_p = \ell$, the latter upper bound for $\gamma$ becomes $\frac{2}{\ell\lambda_{\max}(U^\top U)} =\frac{2}{\ell\|U\|^2}$ if $Q =0$ (matching \cite[Theorem~3.10]{DTT26}), and $\frac{1}{\ell\lambda_{\max}(U^\top U +V^\top V)}$ if $Q \neq 0$, which improves on $\frac{1}{\ell(\|U\|^2 +\|V\|^2)}$ from \cite[Theorem~3.9]{DTT26}.

Another sufficient condition for \eqref{a:PSD_old} is $\Omega -\gamma\Upsilon \succeq 0$, which reduces to \cite[Equations~(18) and (20)]{DTT26} if $\ell_1=...=\ell_p = \ell$, and to \cite[Assumption 4.7]{ACGN25} if $Q =0$ and $\gamma =1$.

When $\Omega = \kappa MM^\top$ for some $\kappa \in [0, +\infty)$, as in Remark~\ref{r:PSD}\ref{r:PSD_gammaeta}, \eqref{a:PSD_old} yields an even larger upper bound for $\gamma$, namely
\begin{align}
    \gamma \leq \frac{\kappa + \alpha}{\lambda_{\max}(\Theta )}.
\end{align}
This situation commonly arises in graph-based splitting algorithms when the underlying graphs coincide. In that case, the condition $\Omega = \kappa MM^\top$ is automatically satisfied; see Example~\ref{ex:E'=E}. 

Moreover, the explicitness condition in Remark~\ref{r:OnAlgo} is simple, clear, and easy to customize. The method also accommodates different cocoercive or Lipschitz constants for the operator $C_j$ and the convergence analysis in Theorem~\ref{t:cvg} is unified for both cases with or without cocoercivity through the introduction of the notion of \emph{quasicomonotonicity}.
\end{example}

In the sequel, we generate a class of algorithms with graph-based structures which easily satisfy Assumption~\ref{a:stand}. Moreover, we provide a simple choice for the coefficient matrices that achieves a large range for the stepsizes. Here, we briefly introduce the main idea of graph theory and refer readers to \cite[Section 4]{DTT26} for more detail. 

Let $G =(\mathcal{V},\mathcal{E})$, where $\mathcal{V}=\{v_1,\dots,v_n\}$, $\mathcal{E}=\{e_1,\dots,e_q\}\subseteq \mathcal{V}\times \mathcal{V}$, be a weighted undirected connected graph with $|\mathcal{V}|=n \geq 2$, $|\mathcal{E}|=q \geq n-1$, and weight matrix $W=(w_{ij})\in [0, +\infty)^{n\times n}$. Let $G'=(\mathcal{V}, \mathcal{E'})$ be a weighted connected subgraph of $G$ with $\mathcal{E'}\subseteq \mathcal{E}$, and weight matrix $W'=(w_{ij}')\in[0, +\infty)^{n\times n}$ such that for all $(v_i,v_j)\in \mathcal{E'}$,
\begin{align*}
w_{ij}' \leq w_{ij}.
\end{align*}
i.e., the weight on each edge of $G'$ is not greater than the weight on the corresponding edge of $G$. 

Let $N=(N_{ij})\in\mathbb{R}^{n\times n}$ where $N_{ij} = w_{ji}$ if $i>j$ and $N_{ij}=0$ if $i \leq j$. Thus $N$ is a strictly lower triangular matrix. Let $\Deg(G)=\diag(d_1,\dots,d_n)$, where $d_i=\sum_{j=1}^n w_{ij}$, is the \emph{degree matrix} of $G$. Let $D=\diag(\delta_1,\dots,\delta_n)$, where $\delta_i =\frac{1}{2}d_i$. It is clear that $2D-N-N^\top = \Deg(G)- W = \Lap(G)$. Furthermore, let $M = (M_{ij}) \in\mathbb{R}^{n\times (n-1)}$ be an \emph{onto decomposition} of the \emph{Laplacian matrix} $\Lap(G') =\Deg(G') -W'$, i.e., $MM^\top = \Lap(G')$. If $G'$ is a tree, then $M$ can be chosen as the \emph{incidence matrix} $\Inc(G'^{\sigma})$ for some orientation $\sigma$ of $G'$ (see \cite[Remark 2.17]{ACL24}). This graph-based selection of $M$, $N$, and $D$ satisfies Assumption~\ref{a:stand}\ref{a:stand_kerM}--\ref{a:stand_N}.

\begin{example}[Algorithms based on ring and sequential graphs]
\label{eg:ring_seq}
Let $G$ be a ring graph and $G'$ be a sequential graph, both with unit weights. Since $G'$ is a tree, we take $M$ as the incidence matrix. Then $D = \diag(1,\dots,1)$, $\DB =\diag(\eta_1,\dots,\eta_r)$,
\begin{alignat*}{2}
    M_{ij} &= 
    \begin{cases}
        1 &\text{~if } i\in\{1,\dots, n-1\},\ j=i\\
        -1 &\text{~if } i\in\{2, \dots,n\},\ j=i-1\\
        0 &\text{~otherwise},
    \end{cases} \qquad\qquad &
    N_{ij} &= 
    \begin{cases}
        1 &\text{~if } i\in\{2,\dots,n\},\ j=i-1\\
        1 &\text{~if } i=n,\ j=1\\
        0 &\text{~otherwise},
    \end{cases}\\
    H &=\left[ 
        \begin{array}{c} 
          0_{(n-1)\times r} \\ 
          \hline 
          1_{1 \times r} 
        \end{array} 
        \right], 
    &
    K &=\left[
        \begin{array}{c|c}
            1_{r\times 1} & 0_{r\times (n-1)}
        \end{array}\right].
\end{alignat*}
It can be seen that Assumption~\ref{a:stand}\ref{a:stand_H} holds and $\Omega =M\bOne \bOne^\top M^\top$. Taking $X=-1_{r\times (n-1)}$, we have $H^\top-K =XM^\top$ and $\Psi =MX^\top \bL^* \DB\bL XM^\top =M(\sum_{k=1}^r \eta_k L_k^* L_k)\bOne \bOne^\top M^\top$. According to Remark~\ref{r:PSD}\ref{r:PSD_gammaeta}, once Assumption~\ref{a:stand}\ref{a:stand_PR} also holds, Assumption~\ref{a:PSD} becomes $M\left(\left(\Id - \frac{\gamma}{1+\alpha}\sum_{k=1}^r \eta_k L_k^* L_k\right) \bOne \bOne^\top + (\alpha\Id - \gamma \Theta ) \right)M^\top \succeq 0$, which is satisfied as soon as
\begin{align}
\label{eg:4.3}
    (n-1)\left(1 -\frac{\gamma}{1+\alpha} \sum_{k=1}^r \eta_k\|L_k\|^2\right) +\alpha -\gamma \lambda_{\max}(\Theta ) \geq 0,
\end{align}
where $\Theta $ is defined in \eqref{def:Theta}.

\emph{Case 1:} Each $C_j$ is $\frac{1}{\ell_j}$-cocoercive. Then we set
\begin{align*}
P =\left[ 
    \begin{array}{c} 
      0_{(n-1)\times p} \\ 
      \hline 
      1_{1 \times p} 
    \end{array} 
    \right],\ 
R =\left[
    \begin{array}{c|c}
        1_{p\times 1} & 0_{p\times (n-1)}
    \end{array}\right],
\text{~~and~~} Q =0,
\end{align*}
which satisfy Assumption~\ref{a:stand}\ref{a:stand_PR}. Algorithm~\ref{algo:full} becomes
\begin{align}
\begin{aligned}
    &\begin{cases}
        x_1^t &= J_{\gamma A_1}(z_1^t)\\
        x_i^t &= J_{\gamma A_i}(z_i^t - z_{i-1}^t + x_{i-1}^t),
        \quad i\in\{2,\dots,n-1\}\\
        x_n^t &= J_{\gamma A_n}(- z_{n-1}^t + x_1^t + x_{n-1}^t - \gamma\sum_{j=1}^p C_j x_1^t - \gamma\sum_{j=1}^r L_j^*(\eta_j L_j x_1^t - w_j^t))\\
        y_k^t &= J_{\frac{1}{\eta_k} B_k}(L_k x_1^t -\frac{1}{\eta_k} w_k^t + L_k x_n^t), \quad k\in\{1,\dots, r\},
    \end{cases}\\
    &\begin{cases}
        z_i^{t+1} &= z_i^t - \lambda_t(x_i^t - x_{i+1}^t), \quad i\in\{1,\dots,n-1\}\\
        w_k^{t+1} &= w_k^t - \lambda_t\eta(L_k x_n^t - y_k^t), \quad k\in\{1,\dots, r\},
    \end{cases}
\end{aligned}
\end{align}
which reduces to \cite[Algorithm~1]{ABT23} when $p=0$ (no single-valued operator $C_j)$ and $\gamma=1$. 

Letting $U = -1_{p\times (n-1)}$, we obtain $P^\top - R = UM^\top$ and $\lambda_{\max}(\Theta )=\lambda_{\max}(\frac{1}{2}U^\top\diag(\ell)U) = \frac{n-1}{2}\sum_{j=1}^p \ell_j$. The condition \eqref{eg:4.3} is equivalent to 
\begin{align*}
\frac{\gamma}{2}\sum_{j=1}^p \ell_j +\frac{\gamma}{1+\alpha}\sum_{k=1}^r \eta_k\|L_k\|^2 \leq \frac{n-1+\alpha}{n-1}.
\end{align*}
In particular, when $p=0$, the condition reduces to $\gamma\sum_{k=1}^r \eta_k\|L_k\|^2 \leq \frac{(1+\alpha)(n-1+\alpha)}{n-1}$.

\emph{Case 2:} Each $C_j$ is monotone and $\ell_j$-Lipschitz continuous. Then, to satisfy Assumption~\ref{a:stand}\ref{a:stand_PR}, we choose
\begin{align*}
P =\left[ 
    \begin{array}{c} 
      0_{(n-2)\times p} \\ 
      \hline 
      1_{1 \times p}\\
      \hline
      0_{1 \times p}
    \end{array} 
    \right],\
    R =\left[
    \begin{array}{c|c}
        1_{p\times 1} & 0_{p\times (n-1)}
    \end{array}\right],
    \text{~~and~~}
    Q =\left[ 
        \begin{array}{c} 
          0_{(n-1)\times p}\\  
          \hline 
          1_{1 \times p} 
        \end{array} 
    \right].
\end{align*}
Algorithm~\ref{algo:full} now takes the form
\begin{align}
\begin{aligned}
    &\begin{cases}
        x_1^t &= J_{\gamma A_1}(z_1^t)\\
        x_i^t &= J_{\gamma A_i}(z_i^t - z_{i-1}^t + x_{i-1}^t),
        \quad i\in\{2,\dots,n-2\}\\
        x_{n-1}^t\!\!\! &= J_{\gamma A_{n-1}}(z_{n-1}^t - z_{n-2}^t + x_{n-2}^t - \gamma\sum_{j=1}^p C_j x_1^t)\\
        x_n^t &= J_{\gamma A_n}(- z_{n-1}^t + x_1^t + x_{n-1}^t + \gamma\sum_{j=1}^p C_j (x_1^t -x_{n-1}^t)  - \gamma\sum_{j=1}^r L_j^*(\eta_j L_j x_1^t - w_j^t))\\
        y_k^t &= J_{\frac{1}{\eta_k} B_k}(L_k x_1^t -\frac{1}{\eta_k} w_k^t + L_k x_n^t), \quad k\in\{1,\dots, r\},
    \end{cases}\\
    &\begin{cases}
        z_i^{t+1} &= z_i^t - \lambda_t(x_i^t - x_{i+1}^t), \quad i\in\{1,\dots,n-1\}\\
        w_k^{t+1} &= w_k^t - \lambda_t\eta_k(L_k x_n^t - y_k^t), \quad k\in\{1,\dots, r\}.
    \end{cases}
\end{aligned}
\end{align}
By definition, $P^\top - R = UM^\top$ and $P^\top-Q^\top = VM^\top$ with $U = [-1_{p\times (n-2)} \ | \ 0_{p\times 1}]$ and $V = [0_{p\times (n-2)} \ | \ 1_{p\times 1}]$. We see that $\lambda_{\max}(\Theta )=\lambda_{\max}(V^\top\diag(\ell)V + U^\top\diag(\ell)U) = (n-2)\sum_{j=1}^p \ell_j$. The condition \eqref{eg:4.3} is satisfied if and only if
\begin{align}
    \frac{\gamma(n-2)}{(n-1)}\sum_{j=1}^p \ell_j +\frac{\gamma}{1+\alpha}\sum_{k=1}^r \eta_k\|L_k\|^2 \leq \frac{n-1+\alpha}{n-1}.
\end{align}
\end{example}

From now on, we work in the setting where each $C_j$ is $\frac{1}{\ell_j}$-cocoercive, which allows us to take $Q =0$. In the absence of cocoercivity, one may instead choose any $Q\in \mathbb{R}^{n \times p}$ satisfying $Q^\top\bOne =\bOne$ together with conditions \eqref{a:PQR_lower_1b}--\eqref{a:PQR_lower_1c} (if an explicit form is desired).

\begin{example}[Algorithms based on complete, sequential, and star graphs]
\label{ex:E'=E}
We consider the particular instance of problem \eqref{primal_prob} when $r =p =n -1$. Let $\mathcal{E'} =\mathcal{E}$ with $w_{ij}=\kappa+1$, $\kappa\in [0, +\infty)$, and $w'_{ij}=1$ for $\{v_i, v_j\}\in\mathcal{E}$, meaning that $G$ and $G'$ have the same graph structure but may differ in their edge weights. Then $\Omega = 2D-N-N^\top - MM^\top = \Lap(G)-\Lap(G')=\kappa \Lap(G') = \kappa MM^\top$.

\emph{Case 1:} $G$ and $G'$ are complete graphs. In this case, we take $M$ as in \cite[Proposition A.2]{ACL24}, which is not the incidence matrix of $G'$ but still satisfies $MM^\top = \Lap(G')$. We now derive an algorithm based on the structure of complete graphs. 

\paragraph{Complete graph algorithm:} 
\begin{alignat*}{3}
    M_{ij} &= 
        \begin{cases}
            a_i := \sqrt{\frac{(n-i)n}{n-i+1}} &\text{~if~} i=j\\ 
            t_j := -\sqrt{\frac{n}{(n-j)(n-j+1)}} &\text{~if~} i > j \\ 
            0 &\text{~otherwise},
        \end{cases}
    \qquad\qquad &
    N_{ij}&=
    \begin{cases}
        \kappa + 1 &\text{~if~} i > j \\
        0 &\text{~otherwise},
    \end{cases}\\
    D &= \frac{(\kappa+1)(n-1)}{2}\diag(1,\dots,1),
    &
    \DB &= \eta \DB' = \diag(\eta a_1^2, \dots, \eta a_{n-1}^2),\\
    H_{ij}=P_{ij}&=\begin{cases}
            \frac{1}{n-j} &\text{~if~} i > j\\ 
            0 &\text{~otherwise},
        \end{cases}
    &
    K=R&=\left[
        \begin{array}{c|c}
            \Id_{n-1} & 0_{(n-1)\times 1}
        \end{array}\right].
\end{alignat*}
Then Assumption~\ref{a:stand}\ref{a:stand_PR}--\ref{a:stand_H} is satisfied. Taking $U=-\diag(\frac{1}{a_1},\dots,\frac{1}{a_{n-1}}) \in \mathbb{R}^{(n-1)\times (n-1)}$, we have $P^\top -R =UM^\top$ and $\lambda_{\max}(\Theta )=\lambda_{\max}(\frac{1}{2}U^\top\diag(\ell)U) = \frac{1}{2}\max_{1\leq k\leq n-1} \frac{\ell_k}{a_k^2}$. Moreover, $\sqrt{\DB'}(H^\top - K) = -M^\top$, which yields $\Psi = (H-K^\top)\bL^* \eta \DB' \bL(H^\top - K) = M (\eta \bL^* \bL) M^\top$. Assumption~\ref{a:PSD} is guaranteed when $(\kappa + \alpha) - \frac{\gamma\eta}{1+\alpha}\max_{1\leq k\leq n-1}\|L_k\|^2 - \frac{\gamma}{2}\max_{1\leq k\leq n-1}\frac{\ell_k}{a_k^2} \geq 0$, which is equivalent to
\begin{align}
\label{stepsize_complete}
    \gamma < \frac{2(\kappa+\alpha)}{\max_{1\leq k\leq n-1} \frac{\ell_k}{a_k^2}} \text{~~and~~}
    \eta_k =\eta a_k^2 \text{~with~} \eta \leq \frac{(1+\alpha)\left(2(\kappa+\alpha)-\gamma \max_{1\leq k\leq n-1}\frac{\ell_k}{a_k^2} \right)}{2\gamma \max_{1\leq k\leq n-1}\|L_k\|^2}.
\end{align}
Algorithm~\ref{algo:full} reduces to
\begin{align}
    \label{algo:complete}
    \begin{aligned}
    &\begin{cases}
        x_1^t &= J_{\frac{2\gamma}{(\kappa+1)(n-1)}A_1}\Big(\frac{2}{(\kappa+1)(n-1)}a_1 z_1^t\Big)\\
        x_i^t &= J_{\frac{2\gamma}{(\kappa+1)(n-1)}A_i}\Bigg(\frac{2}{(\kappa+1)(n-1)}\Big(a_i z_i^t + \sum_{j=1}^{i-1} t_j z_j^t +\sum_{j=1}^{i-1} (\kappa+1)x_j^t -\gamma \sum_{j=1}^{i-1}\frac{1}{n-j} C_j x_j^t \\ 
        &\qquad\qquad\qquad\qquad\qquad\qquad - \gamma \sum_{j=1}^{i-1} \frac{1}{n-j} L_j^*(\eta a_j^2 L_j x_j^t - w_j^t)\Big)\Bigg),\quad i\in\{2,\dots, n-1\}\\
        x_n^t &= J_{\frac{2\gamma}{(\kappa+1)(n-1)}A_n}\Bigg(\frac{2}{(\kappa+1)(n-1)}\Big(\sum_{j=1}^{n-1}t_j z_j^t + \sum_{j=1}^{n-1} (\kappa+1)x_j^t - \gamma \sum_{j=1}^{n-1} \frac{1}{n-j} C_j x_j^t \\
        &\qquad\qquad\qquad\qquad\qquad\qquad - \gamma \sum_{j=1}^{n-1} \frac{1}{n-j} L_j^*(\eta a_j^2 L_j x_j^t - w_j^t)\Big)\Bigg)\\
        y_k^t &= J_{\frac{1}{\eta a_k^2} B_k} \Big(L_k x_k^t - \frac{1}{\eta a_k^2} w_k^t +  L_k (\frac{1}{n-k}\sum_{j=k+1}^n x_j^t)\Big), \quad k\in\{1,\dots, n-1\},
    \end{cases}\\
    &\begin{cases}
        z_i^{t+1} &= z_i^t - \lambda_t (a_i x_i^t + t_i\sum_{j=i+1}^n x_j^t), \quad i\in\{1,\dots,n-1\}\\
        w_k^{t+1} &= w_k^t - \lambda_t \eta a_k^2 (L_k (\frac{1}{n-k}\sum_{j=k+1}^n x_j^t) - y_k^t), \quad k\in\{1,\dots,n-1\}.
    \end{cases}
    \end{aligned}
\end{align}

\emph{Case 2:} $G'$ is a tree. Then $|\mathcal{E}'| = n-1$. Let $M=\Inc(G'^\sigma)\in\mathbb{R}^{n \times (n-1)}$ be the incidence matrix of $G'$ given by
\begin{align*}
\Inc(G'^\sigma)_{ij}=
\begin{cases}
1 \quad &\text{~if edge $e_j$ leaves node $v_i$}\\
-1 &\text{~if edge $e_j$ enters node $v_i$}\\
0 &\text{~otherwise}.
\end{cases}
\end{align*}
Let $H=P=(P_{ij})\in\mathbb{R}^{n \times (n-1)}$ and $K=R=(R_{ij})\in\mathbb{R}^{(n-1) \times n}$ are defined based on the incidence matrix of $G'$,
\begin{align*}
P_{ij}=
\begin{cases}
1 \quad &\text{~if edge $e_j$ enters node $v_i$}\\
0 &\text{~otherwise},
\end{cases}
\qquad 
R_{ji}=
\begin{cases}
1 \quad &\text{~if edge $e_j$ leaves node $v_i$}\\
0 &\text{~otherwise}.
\end{cases}
\end{align*}
Using the coefficient matrices defined above, Assumption~\ref{a:stand}\ref{a:stand_PR}--\ref{a:stand_H} is readily verified. Letting $U = X =-\Id_{n-1}$, we have that $P^\top -R =UM^\top$, $H-K^\top = XM^\top$, $\lambda_{\max}(\Theta )=\lambda_{\max}(\frac{1}{2}U^\top\diag(\ell)U) = \frac{1}{2}\max_{1\leq k\leq n-1} \ell_k$, and $\lambda_{\max}(X^\top X) = 1$. In view of Remark~\ref{r:PSD}\ref{r:PSD_gammaeta}, Assumption~\ref{a:PSD} holds whenever 
\begin{align}
\label{stepsize_tree}
    \gamma < \frac{2(\kappa+\alpha)}{\max_{1\leq k\leq n-1}\ell_k} \quad \text{~and~}\quad 
    \eta_k \leq \frac{(1+\alpha)\left(2(\kappa+\alpha)-\gamma \max_{1\leq k\leq n-1}\ell_k\right)}{2\gamma \|L_k\|^2}, \quad k\in\{1,\dots,r\}.
\end{align}
To illustrate, we give examples for algorithms based on the sequential and star graphs, both of which have tree structures.

\paragraph{Sequential graph algorithm:}
\begin{alignat*}{3}
    M_{ij} &=
    \begin{cases}
        1 &\text{~if~} i\in\{1,\dots, n-1\}, \ j=i \\
        -1 &\text{~if~}i\in\{2,\dots,n\},\ j=i-1\\ 
        0 &\text{~otherwise},
    \end{cases}
    \qquad &
    N_{ij} &=
    \begin{cases}
        \kappa +1 &\text{~if~}\ i\in\{2,\dots,n\},\ j=i-1 \\
        0 &\text{~otherwise},
    \end{cases}\\
    D &= \frac{\kappa+1}{2}\diag(1,2,\dots,2,1),
    &
    \DB &= \diag(\eta_1,\dots,\eta_{n-1}),\\
    H=P&=\left[ 
        \begin{array}{c} 
          0_{1\times (n-1)} \\ 
          \hline 
          \Id_{n-1} 
        \end{array} 
        \right],
        &
    K=R&=\left[
        \begin{array}{c|c}
            \Id_{n-1} & 0_{(n-1)\times 1}
        \end{array}\right].
\end{alignat*}
Algorithm~\ref{algo:full} then becomes 
\begin{align}
\label{algo:seq}
\begin{aligned}
    &\begin{cases}
        x_1^t &= J_{\frac{2\gamma}{\kappa+1} A_1} \left(\frac{2}{\kappa+1}z_1^t\right)\\
        x_i^t &= J_{\frac{\gamma}{\kappa+1} A_{i}}\Big( \frac{1}{\kappa+1}(z_i^t - z_{i-1}^t + (\kappa+1)x_{i-1}^t - \gamma C_{i-1}x_{i-1}^t - \gamma L_{i-1}^*(\eta_{i-1} L_{i-1}x_{i-1}^t - w_{i-1}^t))\Big),\\ 
        &\qquad \qquad i\in\{2,\dots,n-1\}\\
        x_{n}^t &= J_{\frac{2\gamma}{\kappa+1} A_{n}} \left(\frac{2}{\kappa+1}\left(-z_{n-1}^t + (\kappa+1)x_{n-1}^t - \gamma C_{n-1} x_{n-1}^t - \gamma L_{n-1}^*(\eta_{n-1} L_{n-1}x_{n-1}^t - w_{n-1}^t)\right)\right)\\
        y_k^t &= J_{\frac{1}{\eta_k} B_k}\Big(L_k x_k^t - \frac{1}{\eta_k} w_k^t + L_k x_{k+1}^t\Big), \quad k\in\{1,\dots,n-1\},
    \end{cases}\\
    &\begin{cases}
        z_i^{t+1} &= z_i^t - \lambda_t(x_i^t-x_{i+1}^t), \quad i\in\{1,\dots,n-1\}\\
        w_k^{t+1} &= w_k^t - \lambda_t \eta_k (L_k x_{k+1}^t - y_k^t), \quad k\in\{1,\dots,n-1\}.
    \end{cases}
    \end{aligned}
\end{align}

\paragraph{Star graph algorithm:}
\begin{alignat*}{3}
    M_{ij} &=
    \begin{cases}
        1 &~\text{if }i=1,\ j\in\{1, \dots, n-1\} \\
        -1 &~\text{if }i\in\{2,\dots,n\},\ j=i-1\\ 
        0 &\text{~otherwise},
    \end{cases}
    \qquad\qquad &
    N_{ij} &=
    \begin{cases}
        \kappa + 1 &~\text{if }j=1,\ i\in\{2,\dots,n\} \\
        0 & \text{~otherwise},
    \end{cases}\\
    D &= \frac{\kappa+1}{2}\diag(n-1, 1, \dots,1),
    &
    \DB &= \diag(\eta_1,\dots,\eta_{n-1}),\\
    H=P&=\left[ 
        \begin{array}{c} 
          0_{1\times (n-1)} \\ 
          \hline 
          \Id_{n-1} 
        \end{array} 
        \right], 
    &
    K=R&=\left[
        \begin{array}{c|c}
            1_{(n-1)\times 1} & 0_{(n-1)\times (n-1)}
        \end{array}\right].
\end{alignat*}
Algorithm~\ref{algo:full} now takes the form
\begin{align}
\label{algo:star}
\begin{aligned}
    &\begin{cases}
        x_1^t &= J_{\frac{2\gamma}{(\kappa+1)(n-1)} A_1}\Big(\frac{2}{(\kappa+1)(n-1)}\sum_{j=1}^{n-1} z_j^t\Big)\\
        x_i^t &= J_{\frac{2\gamma}{\kappa+1} A_{i}}\left(\frac{2}{\kappa+1}\Big((\kappa+1)x_1^t - z_{i-1}^t - \gamma C_{i-1}x_1^t - \gamma L_{i-1}^*(\eta_{i-1} L_{i-1}x_1^t - w_{i-1}^t)\Big)\right), \quad i\in\{2,\dots,n\}\\
        y_k^t &= J_{\frac{1}{\eta_k} B_k}\Big(L_k x_1^t - \frac{1}{\eta_k} w_k^t + L_k x_{k+1}^t\Big), \quad k\in\{1,\dots,n-1\},
    \end{cases}\\
    &\begin{cases}
        z_i^{t+1} &= z_i^t - \lambda_t(x_1^t-x_{i+1}^t), \quad i\in\{1,\dots,n-1\}\\
        w_k^{t+1} &= w_k^t - \lambda_t \eta_k (L_k x_{k+1}^t - y_k^t), \quad k\in\{1,\dots,n-1\}.
    \end{cases}
    \end{aligned}
\end{align}
\end{example}

\begin{remark}
In the case where $p=0$, i.e., no single-valued operator $C_j$, the stepsize range allowed in \emph{Case 1} of Example~\ref{eg:ring_seq} is $\gamma\sum_{k=1}^r \eta_k\|L_k\|^2 \leq \frac{(1+\alpha)(n-1+\alpha)}{n-1}$, for $\alpha \in [0,1)$. This range significantly improves upon that in \cite[Corollary~1]{ABT23} which is only for $\gamma=1$, $\eta_1=\dots=\eta_r=\eta$, and $\alpha=0$. 
    
Moreover, we not only extend the result to the setting when $p\neq 0$ but also design several algorithms with different graph structures including the complete graph \eqref{algo:complete}, the sequential graph \eqref{algo:seq}, and the star graph \eqref{algo:star} algorithms. It follows from \eqref{stepsize_complete} and \eqref{stepsize_tree} that when $p=0$ (so that $\ell_1=\dots=\ell_p=0$), the stepsize range for the complete graph algorithm is
\begin{align}
\label{complete_p=0}
\gamma\eta\max_{1\leq k\leq n-1} \|L_k\|^2 \leq (1+\alpha)(\kappa+\alpha),
\end{align}
while for the sequential and star graph algorithms it is
\begin{align}
\label{tree_p=0}
\gamma \max_{1\leq k \leq n-1} \eta_k\|L_k\|^2 \leq (1+\alpha)(\kappa + \alpha).
\end{align}
As noted in Remark~\ref{r:PSD}\ref{r:PSD_AB}, the resolvent parameters associated with $A_i$ and $B_k$ are $\frac{\gamma}{\delta_i}$ and $\frac{1}{\eta_k}$, respectively. It is clear that handling the linear operators $L_1, \dots, L_r$  directly, not rather than via the product space reformulation, can lead to alternative algorithmic designs with significantly larger stepsizes, as in \eqref{complete_p=0} and \eqref{tree_p=0}.
\end{remark}

\begin{remark}
\label{r:n+1}
Although the new algorithms in Example~\ref{ex:E'=E} are derived for solving problem~\eqref{primal_prob} when $r=p=n-1$, they can be readily adapted to the case $n=r=p$. Specifically, we apply the complete, sequential, and star graph algorithms to problem~\eqref{primal_prob} with $(n+1)$ maximally monotone operators $A_1,\dots,A_{n+1}$, $n$ compositions of $B_1,\dots,B_n$ with bounded linear operator $L_1,\dots,L_n$, and $n$ cocoercive or monotone Lipschitz operators $C_1,\dots,C_n$. Then by letting operator $A_1=0$ (or $A_{n+1}=0$), we obtain the corresponding algorithms. This is useful in some cases, as shown in the next section.
\end{remark}

\section{Numerical experiments}
\label{s:num_exper}

In this section, we present a numerical experiment on the decentralized fused LASSO (least absolute shrinkage and selection operator) problem to test different choices of the coefficient matrices, which correspond to different graph structures, in Algorithm~\ref{algo:full}. We also investigate the influence of stepsizes and the relaxation parameter on the performance of the algorithm.

\paragraph{Fused LASSO:} The famous LASSO problem is to find a sparse least squares solution of a linear system via $\ell_1$-penalization. The fused LASSO \cite{TSRZK05} extends the classical LASSO in the sense that it encourages the sparsity of not only the solution but also the differences between adjacent components of the solution, i.e., the flatness of the solution. Specifically, given $\mu,\nu\in(0,+\infty)$, the fused LASSO can be written as
\begin{align}
\min_{x\in\mathbb{R}^d} \left(\frac{1}{2}\|\mathcal{A} x-b\|^2 + \mu\|x\|_1 + \nu\|Lx\|_1\right),
\end{align}
where $\mathcal{A}\in\mathbb{R}^{m\times d}, b\in\mathbb{R}^m$, and $L\in\mathbb{R}^{(d-1)\times d}$ with $L_{ii}=-1$, $L_{i,i+1}=1$, and $0$ otherwise.

\paragraph{Decentralized fused LASSO:}
We are interested in the decentralized fused LASSO problem
\begin{align}
\label{prob:dec_fused_LASSO}
    \min_{x\in\mathbb{R}^d} \left(\sum_{i=1}^n \frac{1}{2}\|\mathcal{A}_{(i)} x - b_{(i)}\|^2 + \sum_{i=1}^n \mu_{(i)}\|x\|_1 + \sum_{i=1}^n \nu_{(i)}\|Lx\|_1\right),
\end{align}
where $\mathcal{A}_{(i)} \in\mathbb{R}^{m_i\times d}, b_{(i)}\in\mathbb{R}^{m_i}$ for $i\in\{1,\dots,n\}$, $\sum_{i=1}^n m_i = m$, $\sum_{i=1}^n \mu_{(i)} = \mu$, $\sum_{i=1}^n \nu_{(i)} = \nu$, and the same $L \in \mathbb{R}^{(d-1)\times d}$ as above. Problem \eqref{prob:dec_fused_LASSO} can be rewritten in an equivalent form
\begin{align}
\label{prob:re_dec_fused_LASSO}
    \text{find } x\in \mathbb{R}^d \text{ such that } 0 \in \sum_{i=1}^n A_i x + \sum_{i=1}^n L^* B_i L x + \sum_{i=1}^n C_i x,
\end{align}
where $A_i x = \partial (\mu_{(i)}\|x\|_1)$, $L^* B_i Lx = \partial(\nu_{(i)}\|Lx\|_1)$, and $C_i x = \nabla(\frac{1}{2}\|\mathcal{A}_{(i)} x-b_{(i)}\|^2) = \mathcal{A}_{(i)}^\top(\mathcal{A}_{(i)} x - b_{(i)})$. It is easy to see that $C_i$ is $\frac{1}{\ell_i}$-cocoercive with $\ell_i=\|\mathcal{A}_{(i)}^\top\mathcal{A}_{(i)}\|_2=\|\mathcal{A}_{(i)}\|_2^2$. In addition, it follows from \cite{NPR13} that $\|L\|=\sqrt{\lambda_{\max}(L^*L)}$ can be analytically computed as $\sqrt{2-2\cos{\left(\frac{d-1}{d}\pi\right)}}$. 

We address problem \eqref{prob:re_dec_fused_LASSO} by using the technique discussed in Remark~\ref{r:n+1} for three algorithms: complete graph algorithm \eqref{algo:complete}, sequential graph algorithm \eqref{algo:seq}, star graph algorithm \eqref{algo:star}, and choose $\kappa=0$ for simplicity. 

\paragraph{Decentralized fused LASSO for cancer detection}
We apply the decentralized fused LASSO to the problem of ``hot-spot detection'' which was studied in \cite{TW08}. In particular, the goal is to detect the regions of gain or loss in comparative genomic hybridization (CGH) data. CGH is a technique used to measure the DNA copy numbers of selected genes across the genome. In cancer cells, mutations can lead to deletions or amplifications of genes, resulting in fewer or more DNA copies. CGH array experiments report the $\log_2$ ratio of DNA copy numbers in tumor cells relative to those in reference cells. In CGH analysis, positive values indicate potential DNA copy number gains, while negative values suggest possible losses. Consequently, values close to zero correspond to normal genes, whereas profiles exhibiting multiple regions of gains and losses may indicate the presence of cancer. In contrast to the classical fused LASSO, the decentralized approach has an important advantage of preserving patient privacy, which is critical in healthcare data management. 

The results of CGH experiments are often interpreted manually by biologists, which can be time-consuming and may lack accuracy. Therefore, the fused LASSO can be applied for automatic interpretation. In this experiment, we use the data from \cite{TW08} to represent CGH measurements from 2 glioblastoma multiforme (GBM) tumors, which is a very aggressive and common type of primary brain cancer.

\paragraph{Parameter settings and results:}
We consider problem \eqref{prob:dec_fused_LASSO} with $n=10$ and the data are partitioned into row blocks randomly. The dataset $x\in\mathbb{R}^{990}$ is given and the matrix $\mathcal{A}\in\mathbb{R}^{990 \times 990}$ is the identity matrix, following \cite[Section~2]{TW08}. Then $b$ is obtained by adding an independent and identically distributed Gaussian noise with variance $10^{-3}$ to $\mathcal{A}x$. 

For the parameters, we set $\mu = 0.01$, $\nu=5$, and $\mu_{(i)}=\frac{\mu}{n}$, $\nu_{(i)}=\frac{\nu}{n}$ for $i\in\{1,\dots,n\}$. For each algorithm, we compute the corresponding upper bound $\gamma_{\max}$ and $\eta_{\max}$ (or $\eta_{k,\max}$ for the sequential and star graph algorithms) using \eqref{stepsize_complete} and \eqref{stepsize_tree}. We then set $\gamma := \hat{\gamma} \gamma_{\max}$ and $\eta = \hat{\eta}\eta_{\max}$ (or $\eta_k = \hat{\eta}\eta_{k,\max}$), where $\hat{\gamma}, \hat{\eta} \in (0,1)$. Finally, we select $\alpha, \hat{\lambda}\in(0,1)$ and set $\lambda_t := \lambda = \hat{\lambda}(1-\alpha)$.

We measure the performance of the algorithms using the relative error 
\begin{align}
    \max_{i\in\{1,\dots,n+1\}} \frac{\|x_i^k - x^*\|}{\|x^*\|},
\end{align}
where the exact solution $x^*$ is computed by CLARABEL v0.11.1 solver, called via CVXPY v1.8.1. The settings for the solver, including absolute duality gap tolerance, relative duality gap tolerance, feasibility check tolerance are adjusted to $10^{-14}$ to achieve higher precision. For simplicity, we examine the effect of varying parameters $\hat{\gamma}, \hat{\eta}, \hat{\lambda}$, and $\alpha$. The behaviors of the sequential and star graph algorithms are very similar, thus we only include the results for the sequential graph algorithm.

First, we examine the effect of $\gamma$ on the performance by fixing $\hat{\eta} = 0.9$, $\hat{\lambda}=0.9$, $\alpha=0.1$, and varying $\hat{\gamma}\in(0,1)$. The results for the complete graph and the sequential graph algorithms are provided in Figure~\ref{fig2:num_2}. They indicate that both algorithms perform better when $\hat{\gamma}$ is small. Repeating the experiment with varying $\hat{\eta}$, we observe the opposite behavior: the algorithms achieve the best performance when $\hat{\eta}$ is large.
\begin{figure}
    \centering
    \begin{subfigure}{0.48\textwidth}
        \includegraphics[width=\linewidth]{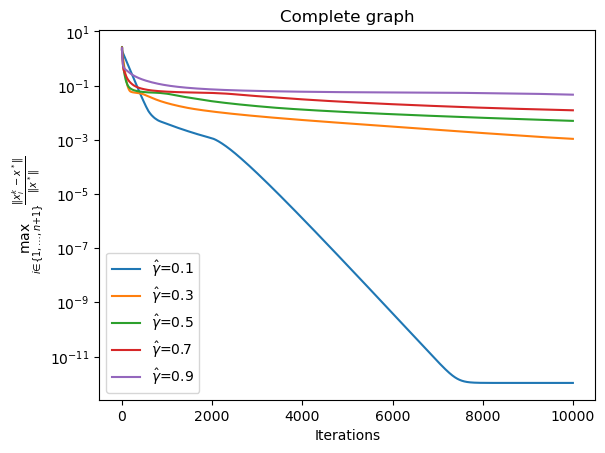}
    \end{subfigure}
    \begin{subfigure}{0.48\textwidth}
        \includegraphics[width=\linewidth]{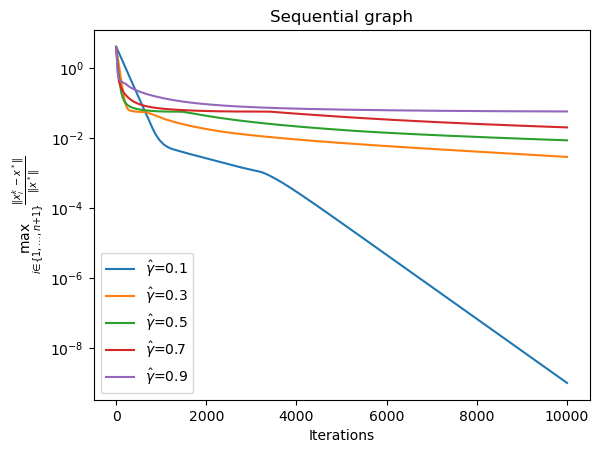}
    \end{subfigure}
    \caption{Effect of varying $\hat{\gamma}$ on the performance of the complete and sequential graph algorithms.}
    \label{fig2:num_2}
\end{figure}
Next, we investigate how the algorithms behave as $\hat{\lambda}$ varies while $\hat{\gamma} = 0.1$, $\hat{\eta}=0.9$, and $\alpha=0.1$ are fixed. Figure~\ref{fig4:num_4} presents the results for the complete graph and the sequential graph algorithms, showing that both algorithms converge faster when $\hat{\lambda}$ is large. In contrast, when varying $\alpha$ within $(0,1)$, we observe that the algorithms perform best for small values of $\alpha$. Similar behaviors with respect to $\hat{\gamma}$ and $\hat{\lambda}$ were also reported in \cite[Section~5.2]{ABT23}. These behaviors may be explained by a trade-off between aggressiveness and stability: while larger values of $\hat{\gamma}$ lead to more aggressive updates, smaller values can produce more stable and better-conditioned iterations, resulting in faster practical convergence. As also noted in \cite[Remark~2.8]{MT20}, the optimal convergence rate does not always occur for the largest possible stepsize.
\begin{figure}
    \centering
    \begin{subfigure}{0.48\textwidth}
        \includegraphics[width=\linewidth]{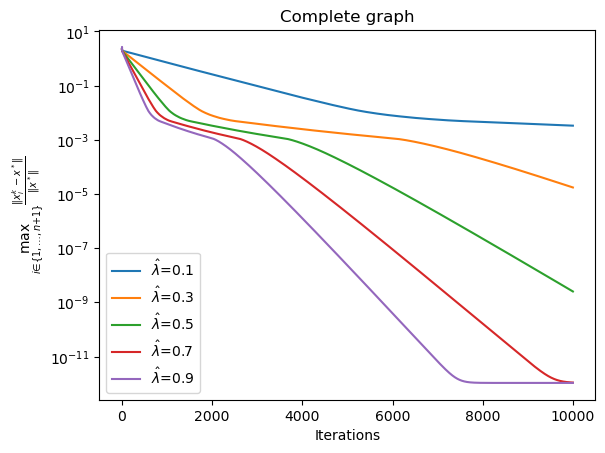}
    \end{subfigure}
    \begin{subfigure}{0.48\textwidth}
        \includegraphics[width=\linewidth]{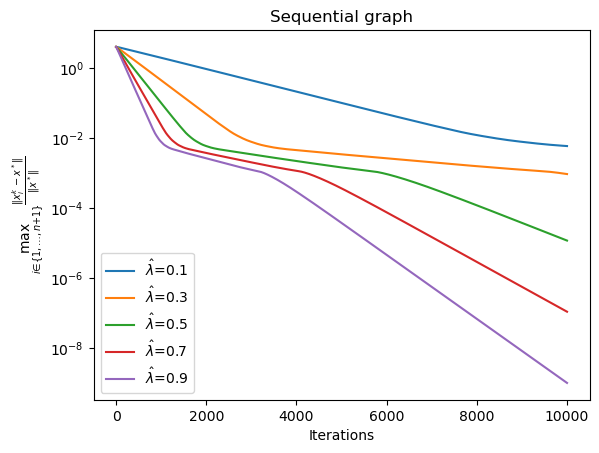}
    \end{subfigure}
    \caption{Effect of varying $\hat{\lambda}$ on the performance of the complete and sequential graph algorithms.}
    \label{fig4:num_4}
\end{figure}

Now, we compare the performance of three algorithms using $\hat{\gamma}=0.1, \hat{\eta}=0.9$, $\hat{\lambda}=0.9$, and $\alpha=0.1$. In Figure~\ref{fig5:sub1}, the data are represented by dots, while the solution obtained by the CVXPY solver and the complete graph algorithm are shown by red solid and green dashed lines, respectively. The result suggests that the complete graph solution successfully detects both regions of gains and losses and matches the solution given by the CVXPY solver mentioned above. The relative error shown in Figure~\ref{fig5:sub2} indicates that the complete graph converges faster in terms of iteration counts compared to the others. However, this comes at the cost of increased runtime. The sequential graph and star graph algorithms exhibit slightly different at the beginning but nearly identical after that.
\begin{figure}
\captionsetup{justification=centering}
    \centering
    \begin{subfigure}{0.49\textwidth}
        \includegraphics[width=\linewidth]{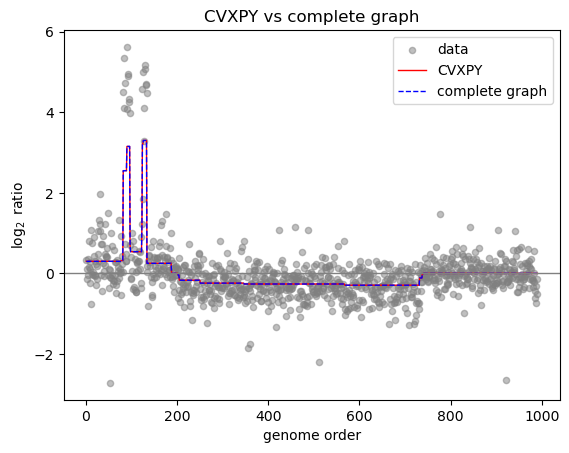}
        \caption{CGH data with solutions obtained by the CVXPY solver and the complete graph algorithm.}
        \label{fig5:sub1}
    \end{subfigure}\hfill
    \begin{subfigure}{0.49\textwidth}
        \includegraphics[width=\linewidth]{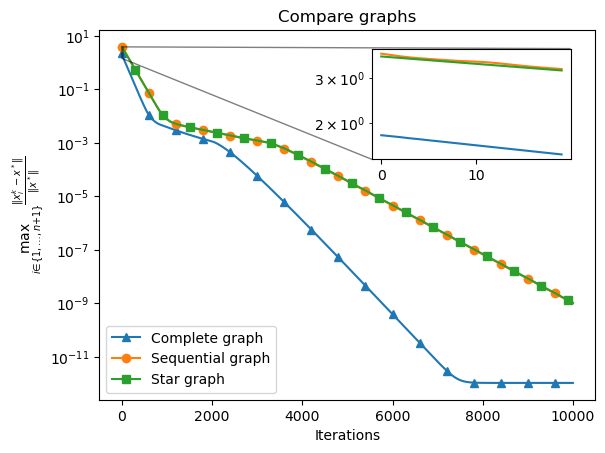}
        \caption{Convergence comparison of the complete graph, sequential graph, and star graph algorithms.}
        \label{fig5:sub2}
    \end{subfigure}
    \caption{Comparison of the solutions obtained by graph-based algorithms and the CVXPY solver.}
    \label{fig5:num_5}
\end{figure}

We note that the performance of the algorithms can indeed vary depending on the coefficient matrices, as reflected in the numerical experiments. Choosing these matrices is, in general, a nontrivial task that has been studied independently \cite{BB26, BB26b}. The primary aim of our numerical experiments is to demonstrate the generality and applicability of the proposed framework. We focus on clarity and accessibility, providing interpretations for specific scenarios rather than performing an exhaustive sensitivity analysis or drawing broad conclusions regarding performance.

\paragraph{Acknowledgements.} The research of MND, MKT and TDT was supported in part by Australian Research Council grant DP230101749.


\begin{thebibliography}{99}

\setlength{\itemsep}{-2pt}

\bibitem{ACGN25}
A.~\r{A}kerman, E.~Chenchene, P.~Giselsson, and E.~Naldi, Splitting the forward-backward algorithm: a full characterization, \href{https://arxiv.org/abs/2504.10999}{arXiv:2504.10999}.

\bibitem{ABT23}
F.J.~Arag{\'o}n-Artacho, R.I.~Bo{\c{t}}, and D.~Torregrosa-Bel{\'e}n, A primal-dual splitting algorithm for composite monotone inclusions with minimal lifting, \emph{Numer. Algorithms}~{\bf 93}(1), 103--130 (2023).

\bibitem{ACL24}
F.J.~Arag\'{o}n-Artacho, R.~Campoy, and C.~L\'{o}pez-Pastor, Forward-backward algorithms devised by graphs, \emph{SIAM J. Optim.}~{\bf 35}(4), 2423--2451 (2025).

\bibitem{AMTT23}
F.J.~Arag\'{o}n-Artacho, Y.~Malitsky, M.K.~Tam, and D.~Torregrosa-Bel\'{e}n, Distributed forward-backward methods for ring networks, \emph{Comput. Optim. Appl.}~{\bf 86}(3), 845--870 (2023).

\bibitem{AT96}
H.~Attouch and M.~Th{\'e}ra, A general duality principle for the sum of two operators, \emph{J. Convex Anal.}~{\bf 3}(1), 1--24 (1996).

\bibitem{BDP22}
S. Bartz, M.N. Dao, and H.M. Phan, 
Conical averagedness and convergence analysis of fixed point algorithms,
\emph{J. Glob. Optim.}~{\bf 82}(2), 351--373 (2022).

\bibitem{BC17}
H.H.~Bauschke and P.L.~Combettes,
\emph{Convex Analysis and Monotone Operator Theory in Hilbert Spaces}, 2nd ed., 
Springer, Cham (2017).

\bibitem{BMW20}
H.H.~Bauschke, W.M.~Moursi, and X.~Wang, Generalized monotone operators and their averaged resolvents, \emph{Math. Program.}~{\bf 189}(1-2), 55--74 (2021).

\bibitem{BH13}
R.I.~Bo{\c{t}} and C.~Hendrich, A Douglas--Rachford type primal-dual method for solving inclusions with mixtures of composite and parallel-sum type monotone operators, \emph{SIAM. J. Optim.}~{\bf 23}(4), 2541--2565 (2013).

\bibitem{BCLN22}
K.~Bredies, E.~Chenchene, D.A.~Lorenz, and E.~Naldi, Degenerate preconditioned proximal point algorithms, \emph{SIAM J. Optim.}~{\bf 32}(3), 2376--2401 (2022).

\bibitem{BAC11}
L.M~Brice\~no-Arias and P.L.~Combettes, A monotone + skew splitting model for composite monotone
inclusions in duality, \emph{SIAM J. Optim.}~{\bf 21}(4), 1230--1250 (2011).

\bibitem{BB26}
R.L.~Bassett and P.~Barkley, Optimal design of resolvent splitting algorithms, \emph{Math. Prog. Comp.}, (2026).

\bibitem{BB26b}
P.~Barkley and R.L.~Bassett, Coupled adaptable backward-forward-backward resolvent splitting algorithm (CABRA): A matrix-parametrized resolvent splitting method for the sum of maximal monotone and cocoercive operators composed with linear coupling operators, \href{https://arxiv.org/abs/2505.13927}{arXiv:2505.13927}.

\bibitem{ChamP11}
A.~Chambolle and T.~Pock, A first-order primal-dual algorithm for convex problems with applications to imaging, \emph{J. Math. Imaging Vis.}~{\bf 40}(1), 120--145 (2011).

\bibitem{CHZ13}
P.~Chen, J.~Huang, and X.~Zhang, A primal–dual fixed point algorithm for convex separable minimization with applications to image restoration, \emph{Inverse Probl.}~{\bf 29}(2), 25011 (2013).

\bibitem{ComP11}
P.L.~Combettes and J.-C.~Pesquet, Proximal splitting methods in signal processing, in: H.H.~Bauschke, R.S.~Burachik, P.L.~Combettes, V.~Elser, D.R.~Luke, and H.~Wolkowicz (Eds), \emph{Fixed-Point Algorithms for Inverse Problems in Science and Engineering}, Springer, New York, (2011).

\bibitem{ComP12}
P.L.~Combettes and J.-C.~Pesquet, Primal–dual splitting algorithm for solving inclusions with mixtures of composite, Lipschitzian, and parallel-sum type monotone operators, \emph{Set Valued Var. Anal.}~{\bf 20}(2), 307--330 (2012).

\bibitem{Con13}
L.~Condat, A primal–dual splitting method for convex optimization involving Lipschitzian, proximable and linear composite terms, \emph{J. Optim. Theory. Appl.}~{\bf 158}(2), 460--479 (2013).

\bibitem{DTT26}
M.N.~Dao, M.K.~Tam, and T.D.~Truong, A general approach to distributed operator splitting, \emph{J. Math. Anal. Appl.}~{\bf 562}(2), 130692 (2026).

\bibitem{DR56}
J.~Douglas and H.H.~Rachford, On the numerical solution of heat conduction problems in two and three space variables, \emph{Trans. Am. Math. Soc.}~{\bf 82}(2), 421--439 (1956).

\bibitem{DST15}
Y.~Drori, S.~Sabach, and M.~Teboulle, A simple algorithm for a class of nonsmooth convex-concave saddle-point problems, \emph{Oper. Res. Lett.}~{\bf 43}(2), 209--214 (2015).

\bibitem{LM79}
P.L.~Lions and B.~Mercier, Splitting algorithms for the sum of two nonlinear operators, \emph{SIAM J. Nume. Anal.}~{\bf 16}(6), 964--979 (1979).

\bibitem{MT20}
Y.~Malitsky and M.K.~Tam, A forward-backward splitting method for monotone inclusions without cocoercivity, \emph{SIAM J. Optim.}~{\bf 30}(2), 1451--1472 (2020).

\bibitem{MT23}
Y.~Malitsky and M.K.~Tam, Resolvent splitting for sums of monotone operators with minimal lifting, \emph{Math. Program.}~{\bf 201}, 231--262 (2023).

\bibitem{NPR13}
S.~Noschese, L.~Pasquini, and L.~Reichel, Tridiagonal Toeplitz matrices: properties and novel applications, \emph{Numer. Linear Algebra Appl.}~{\bf 20}, 302--326 (2013).

\bibitem{RFP13}
H.~Raguet, J.~Fadili, and G.~Peyr\'{e}, A generalized forward-backward splitting, \emph{SIAM J. Imaging Sci.}~{\bf 6}(3), 1199--1226 (2013).

\bibitem{Roc70}
R.T.~Rockafellar, Monotone operators associated with saddle-functions and minimax problems, in: F.E.~Browder (Ed.) \emph{Nonlinear functional analysis, Proc. Symp. Pure Math.}~{\bf 18}, 241--250 (1970).

\bibitem{RW98}
R.T.~Rockafellar and R.~Wets, \emph{Variational Analysis}, vol.~317.~Springer, Berlin (1998). 

\bibitem{Rol24}
F.~Rold\'{a}n, Forward-primal-dual-half-forward algorithm for splitting four operators, \emph{J. Optim. Theory. Appl.}~{\bf 204}(1), 11 (2025).

\bibitem{Ryu20}
E.K.~Ryu, Uniqueness of DRS as the 2 operator resolvent-splitting and impossibility of 3 operator resolvent-splitting, \emph{Math. Program.}~{\bf 182}(1-2), 233--273 (2020).

\bibitem{Tam23}
M.K.~Tam, Frugal and decentralised resolvent splittings defined by nonexpansive operators, \emph{Optim Lett.}~{\bf 18}(7), 1541--1559 (2023).

\bibitem{TW08}
R.~Tibshirani, P.~Wang, Spatial smoothing and hot spot detection for CGH data using the fused lasso, \emph{Biostatistics}~{\bf 9}(1), 18--29 (2008).

\bibitem{TSRZK05}
R.~Tibshirani, M.~Saunders, S.~Rosset, J.~Zhu, and K.~Knight, Sparsity and smoothness via the fused lasso,
\emph{J. R. Stat. Soc. Ser. B (Stat. Methodol.)}~{\bf 67}(1), 91--108 (2005).

\bibitem{Tseng00}
P.~Tseng, A modified forward-backward splitting method for maximal monotone mappings, \emph{SIAM J. Control Optim.}~{\bf 38}(2), 431--446 (2000).

\bibitem{Vu13}
B.C.~V\~{u}, A splitting algorithm for dual monotone inclusions involving cocoercive operators,
\emph{Adv. Comput. Math.}~{\bf 38}(3), 667--681 (2013).

\end{thebibliography}
\end{document}